\def\hB{\hspace*{\fill}$\qed$}
\let\counterwithout\relax
\title{Coarse assembly maps}
\date{\today}
\author{
Ulrich Bunke\thanks{Fakult{\"a}t f{\"u}r Mathematik,
Universit{\"a}t Regensburg,
93040 Regensburg,
GERMANY\newline
ulrich.bunke@mathematik.uni-regensburg.de} 
\and
Alexander Engel\thanks{Fakult{\"a}t f{\"u}r Mathematik,
Universit{\"a}t Regensburg,
93040 Regensburg,
GERMANY\newline
alexander.engel@mathematik.uni-regensburg.de}
}
\numberwithin{equation}{section}
\newtheorem{theorem}{Theorem}[section] 
\newtheorem{prop}[theorem]{Proposition}
\newtheorem{lem}[theorem]{Lemma}
\newtheorem{ddd}[theorem]{Definition}
\newtheorem{kor}[theorem]{Corollary}
\theoremstyle{remark}
\theoremstyle{definition}
\newtheorem{ex}[theorem]{Example}
\newtheorem{rem}[theorem]{Remark}
\newcommand{\BT}{\mathbf{\TopBorn}}
\newcommand{\ubs}{uniform bornological coarse space }
\newcommand{\bP}{\mathbf{P}}
\newcommand{\UBC}{\mathbf{UBC}}
\newcommand{\Yo}{\mathrm{Yo}}
\newcommand{\BC}{\mathbf{BornCoarse}}
\newcommand{\TopBorn}{\mathbf{TopBorn}}
\newcommand{\Cofib}{\mathrm{Cofib}}
\newcommand{\cP}{\mathcal{P}}
\newcommand{\bF}{{\mathbf{F}}}
\newcommand{\PSh}{{\mathbf{PSh}}}
\newcommand{\const}{{\mathtt{const}}}
\newcommand{\cO}{{\mathcal{O}}}
\newcommand{\cU}{{\mathcal{U}}}
\newcommand{\cY}{{\mathcal{Y}}}
\renewcommand{\colim}{\mathrm{colim}}
\newcommand{\Fl}{{F\hspace{-1pt}l}}
\newcommand{\KX}{K\!\mathcal{X}}
\newcommand{\lf}{{l\! f}}
\newcommand{\an}{\mathit{an}}
\newcommand{\free}{\mathit{free}}
\newcommand{\geom}{\mathit{geom}}
\newcommand{\sm}{\mathit{sm}}
\newcommand{\Born}{\mathbf{Born}}
\newcommand{\Spc}{\mathbf{Spc}}
\newcommand{\IN}{\mathbb{N}}
\newcommand{\IR}{\mathbb{R}}
\begin{document}

\maketitle

\begin{abstract}
For every strong coarse homology theory we construct a coarse assembly map as a natural transformation between coarse homology theories. We provide various conditions implying that this assembly map is an equivalence. These results 
generalize known results for the analytic coarse assembly map for $K$-homology  to general coarse homology theories. Furthermore, we calculate the
domain of the coarse assembly map explicitly in terms of locally finite homology theory.
\end{abstract}

\tableofcontents

\section{Introduction}

In this paper we propose a construction of a coarse assembly map as a natural transformation between coarse homology theories and study conditions  under which it is an equivalence.

The classical instance of the coarse assembly map is the coarse analytic assembly map featuring in the coarse Baum--Connes conjecture (Higson--Roe \cite{hr}). For a proper metric space it is constructed as a homomorphisms of $\Z$-graded groups  from the coarsification of the locally
finite $K$-homology groups of this space to the  coarse $K$-homology groups of the same space.  

In \cite{buen} we refined the domain and the target of the coarse analytic assembly map to spectrum-valued  coarse homology theories defined on the category of bornological coarse spaces.  We then tried to understand the  coarse analytic  assembly map as a transformation between
coarse homology theories.  We observed   that {the} solution of this task would imply the coarse Baum-Connes conjecture 
in the case of finite asymptotic dimension  as a formal consequence of comparison theorems \cite[Thm.~6.115]{buen}. More generally, 
given a general coarse homology theory one can define the  coarsification of an associated locally finite homology theory and ask for an {analogue} of the coarse analytic assembly map. In this case we would obtain   an analogue of the coarse Baum--Connes conjecture for the coarse homology in question.

As we shall observe in the present paper the successful solution of these problems involves modifying the domain of the coarse assembly map.  Instead of coarsifying locally finite homology theories we coarsify local homology theories.  The first goal
of the present paper is to introduce this notion and to develop a motivic picture of the coarsification process. 
We then define the assembly map using the forget-control map of  the cone sequence.  For more details we refer to Subsection \ref{regoiwjergfrewfwerfwerfwref}. 
The second goal  of the paper explained    in greater detail in Subsection \ref{efioerfefvefvfsv} is to provide various conditions implying that the coarse assembly map is an equivalence. 
Furthermore, under suitable assumptions, we can calculate  the domain of the coarse assembly map in terms of locally finite homology.  
These calculations show that the construction of the present paper solves the original problem in many cases. 
In a separate paper  {\cite{compass}}
we consider the case of coarse $K$-homology theory. We  will show that for nice spaces  the new assembly map is equivalent to the classical  coarse analytic assembly map.

%
%
%
%

\subsection{Construction of the coarse assembly maps}\label{regoiwjergfrewfwerfwerfwref}
 
In the following we describe the set-up in which we will construct the coarse assembly map. The basic category is the  category $\BC$ of bornological coarse spaces introduced
in \cite{buen}.  Let $\bC$  be a   cocomplete, stable $\infty$-category, e.g., the $\infty$-category of spectra $\Sp$.  A $\bC$-valued coarse homology theory is a functor
$$E \colon \BC\to \bC$$  which is coarsely invariant, coarsely excisive, $u$-continuous, and vanishes on flasques. We refer to \cite{buen} for a detailed description of these properties. In order to study properties of coarse homology theories in general we  constructed in \cite{buen} a universal coarse homology theory
$$\Yo^{s} \colon \BC\to \Sp\cX$$
with values in the stable $\infty$-category of motivic coarse spectra $\Sp\cX$. A $\bC$-valued coarse homology   theory  as above is then equivalently described as a colimit preserving functor
\[E\colon \Sp\cX\to \bC\, .
\]

Locally finite homology theories are defined on the category $\TopBorn$ of topological bornological spaces and proper continuous maps \cite[Sec.~7.1]{buen}.  The target  $\bC$ of  a locally finite homology theory will  in addition  be assumed to be  complete. 
A   functor
$$H^{\lf} \colon \TopBorn\to \bC$$ 
is a locally  finite homology theory 
 if, in addition to the  usual homological conditions of excision and homotopy invariance,   it satisfies the local finiteness condition. It requires   that the natural map
 \begin{equation}\label{f34hio34t34g3g3g3gg}
H^{\lf}(X)\to \lim_{B} \Cofib(H^{\lf}(X\setminus B)\to H^{\lf}(X))
\end{equation}
is an equivalence for every bornological topological space $X$,
where the limit runs over the bounded subsets of $X$. Every homology theory $H$ has a corresponding locally finite version $H^{\lf}$ \cite[Def.~7.15 and Prop.~7.37]{buen}.

A particular class of coarse homology theories are  coarsifications $QH^{\lf}$ of   locally finite homology theories  $H^{\lf}$ \cite[Def.~7.44 and Prop. 7.46]{buen}.
In contrast to general coarse homology theories, coarsifications of locally finite homology theories seem to be much more tractable because they can be studied by well-established methods of homotopy theory.

Analytic $K$-homology $K^{\an,\lf}\colon \TopBorn\to \BC$ is an example of a locally finite homology theory \cite[Sec.~7.66]{buen}. For a bornological coarse space $X$ of locally bounded geometry  in \cite[Def.~8.139]{buen} {we constructed}  a version of the coarse analytic assembly map  
\[
\mu\colon QK^{\an,\lf}(X)\to \KX(X)\, ,
\]
where $\KX\colon \BC\to \Sp$ is the coarse $K$-homology.
  We were not able to construct such a morphism for arbitrary spaces $X$. In particular, we do not have a natural transformation between coarse homology theories.

The construction of $\mu$ uses specialities of topological $K$-theory. One could ask whether there are other  pairs of a coarse homology theory $E$ and a locally finite homology theory $H^{\lf}$ which are related by a version of a coarse assembly map $QH^{\lf}(X)\to E(X)$.  
In the present paper we will see that, if interpreted in the appropriate sense, such pairs exist in abundance. One of the difficulties seems to be the fact that 
locally finite homology theories are characterized by a limit condition \eqref{f34hio34t34g3g3g3gg}. It is therefore complicated to construct maps out of locally finite homology theories.
The main novelty of the present paper is to introduce the notion of a local homology theory, essentially by replacing the condition of being locally finite by the weaker condition of vanishing on flasques, {see Definition~\ref{ioojwfefewfwfw}.} 

In the following we explain this in greater detail. We introduce the  category of uniform bornological coarse spaces $\UBC$. A    local  homology theory is then a functor
\[
F\colon\UBC\to \bC\]
which is homotopy invariant, excisive, $u$-continuous,  and vanishes on flasques. 
We will   construct a universal local homology theory
\[
\Yo\cB^{s} \colon\UBC\to \Sp\cB
\]
with values in motivic uniform bornological coarse spectra (Corollary \ref{giugoeggergg}). 
In fact, we will consider two versions of local homology theories distinguished by the condition that the descent axiom
involves open or closed decompositions. The open version will be indicated by adding a superscript ``$o$''.
Similarly as in the case of coarse homology theories, a $\bC$-valued local homology theory  is  equivalently  described as a colimit preserving functor
$$F\colon\Sp\cB\to \bC\, .$$
The nature of the local finiteness condition  \eqref{f34hio34t34g3g3g3gg}  makes it impossible to construct a universal locally finite homology theory in a similar manner.

Any locally finite homology   theory $H^{\lf}$ gives rise to a local homology theory which in the notation of the present paper  appears as $H^{\lf}\circ F_{\cC,\cU/2}$ {in Lemma~\ref{fiweofwefewfewfewf}.}
  
A uniform bornological coarse space has an underlying bornological coarse space.  But if we simply  forget the uniform structure, then we completely lose the local topological structure of the space. A more interesting transition from uniform bornological coarse spaces to bornological coarse spaces keeping the local structure is given by the cone construction. Indeed, with the help of the cone one can encode the uniform structure into a suitable coarse structure.

The cone construction will be investigated in various versions in Section \ref{blijeobereggreerg}; the main version is the one in Definition~\ref{rgfporgergergereg1}, \cite[Ex.~5.16]{buen} and \cite[Def.~9.24]{equicoarse}. It  provides a functor
\[
\cO\colon\UBC\to \BC\, .
\]
The cone and the germs at infinity
\[
\cO^{\infty}\colon\UBC\to \Sp\cX
\]
of the cone (see Definition \ref{eroiergegggg}, \cite[Sec.~5.2.3]{buen} and \cite[Sec.~9.5]{equicoarse}) can  be used to pull-back coarse homology theories to functors defined on $\UBC$. 

Let $E\colon\BC\to \bC $ be a coarse homology theory. It is strong \cite[Def.~4.19]{equicoarse} if it annihilates not only flasque but also weakly flasque bornological coarse spaces.
We interpret $E$ as a colimit preserving functor on $\Sp\cX$ and consider the composition
$$E\cO^{\infty}\colon \UBC\stackrel{\cO^{\infty}}{\to}\Sp\cX\stackrel{E}{\to} \bC\, .$$
\begin{lem}[Lemma \ref{fewoiiofwefewf}]\label{qwierfofeqwdqwedqwede}
If $E$ is strong, then $E \cO^{\infty}$ 
 is a    local homology theory.
\end{lem}

The idea to use some version of cones in order to pull-back coarse homology theories has some history. We refer to Higson--Pederson--Roe
\cite[Prop.~12.1]{higson_pedersen_roe} (coarse $K$-homology),  
Mitchener \cite[Thm.~4.9]{mit} ({coarsely excisive} theories), Bartels--Farrell--Jones--Reich \cite[Sec.~5]{MR2030590} (equivariant coarse algebraic {$K$-homology}), \cite{hamped} (coarse topological and algebraic $K$-theory and $L$-theory), or Weiss
\cite{MR1880196} (algebraic $K$-theory of additive categories and retractive spaces) as entry points to the literature.

Given an entourage $U$ of a bornological coarse space $X$ we can form the Rips complex $P_{U}(X)$ at scale $U$, see Example \ref{efwoiuweofwefewfewfw}. It is a simplicial complex which will be equipped  with the path quasi metric induced by the spherical   metric on its simplices. The metric induces a coarse and a uniform structure on $P_{U}(X)$, and the family of subsets $(P_{U}(B))_{B\in \cB}$ (where $\cB$ denotes the bornology of $X$)  generates the bornology of the uniform bornological coarse space $P_{U}(X)$. There is a canonical embedding  $X\hookrightarrow  P_{U}(X)$ of $X$ into the zero skeleton of $P_{U}(X)$ which induces an equivalence of bornological coarse spaces $X_{U}\to F_{\cU}(P_{U}(X))$, where $F_{\cU}$ forgets the uniform structure.

On the one hand the family   $(F_{\cU}(P_{U}(X)))_{U\in \cC}$ (where $\cC$ denotes the coarse structure of~$X$) of  underlying bornological coarse spaces of the Rips complexes is coarsely equivalent to the constant family on $X$.
One the other hand, forming the colimit of the motivic uniform bornological coarse spectra represented by the Rips complexes, we obtain a functor
\[
\bP\colon\BC\to \Sp\cB
\]
called the universal coarsification, see Definition \ref{ergioerggrg546546}.
In detail,
$$\bP(X)\simeq \colim_{U\in \cC} \Yo^{s}\cB(P_{U}(X))\, .$$
By Proposition \ref{foiwfewfewewfewwe} the functor $\bP$ is a $\Sp\cB$-valued coarse homology theory and can therefore be interpreted as a colimit preserving functor 
\begin{equation}\label{erlkgnmekvleferr}
\bP\colon\Sp\cX\to \Sp\cB\, .
\end{equation}
Pull-back along $\bP$ associates to every  $\bC$-valued local homology theory $F$ a  $\bC$-valued coarse homology theory $F\bP$, see Definition \ref{oijoiergerg}.


For a locally finite homology theory $H^{\lf}$ the coarsification of the local homology theory $H^{\lf}\circ F_{\cC,\cU/2}$ induced from $H^{\lf}$ coincides with the coarsification $QH^{\lf}$ from \cite[Def.~7.44]{buen} which we have discussed earlier, i.e., we have an equivalence
\[
QH^{\lf}\simeq (H^{\lf}\circ \cF_{\cC,\cU/2})\bP\, .
\]

We now state the main construction of this paper. Let $E \colon G\BC\to \bC$ be a strong  coarse homology theory. The cone construction gives rise to a fibre sequence 
\begin{equation}\label{fspoafkapdsfasfasfsdf}
E\circ F_{\cU} \to E\cO \to E\cO^{\infty}\xrightarrow{\partial} \Sigma E\circ F_{\cU}
\end{equation} 
of  local homology theories $\UBC\to \bC$.
For the following definition we interpret \eqref{fspoafkapdsfasfasfsdf} as a fibre sequence of colimit preserving functors $\Sp\cB\to \bC$.
 \begin{ddd}[Definition \ref{fiwjfofewfewfwefefweffw}]\label{fiwjfoe8765rteffw}
The coarse assembly map for~$E$ is the natural transformation between coarse homology theories
\[
\mu_{E} \colon E\cO^{\infty}\bP\to \Sigma E\, ,
\]
derived from the boundary map $\partial$ of the cone sequence \eqref{fspoafkapdsfasfasfsdf} by
precomposition with $\bP$ and using the identification $\Sigma E\simeq \Sigma E\circ \bF\circ \bP$ (see Proposition \ref{frioorfwefewfefwefew}).
\end{ddd}

\subsection{{Isomorphism results and computations}}\label{efioerfefvefvfsv}
 
In Section \ref{ergop34t34t34t34} we study various conditions on the strong coarse homology theory $E$ and the bornological coarse space $X$ which imply that the coarse assembly map
\[
\mu_{X,E} \colon E\cO^{\infty}\bP(X)\to \Sigma E(X)
\]
is an equivalence. 
Let us mention three results which are analogues of  instances of the coarse Baum--Connes and the coarse Farrell--Jones conjecture. We refer to Section~\ref{ergop34t34t34t34} for a detailed description of the assumptions occuring in the following theorems.

Let $X$ be a bornological coarse space and let  $E \colon \BC\to \bC$ be a  strong coarse homology theory.
\begin{theorem}[Theorem~\ref{woifowfwewfewf}]\label{thm54terw234}
If $X$ admits  a cofinal set of entourages $U$ such that $X_{U}$ has finite asymptotic dimension, then
the coarse assembly map $  \mu_{E,X} $ is an equivalence.
\end{theorem}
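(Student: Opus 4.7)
The plan is to prove this by induction on the asymptotic dimension, after first reducing to a single entourage via $u$-continuity. This mirrors the classical strategy of Yu's coarse Baum--Connes theorem for spaces of finite asymptotic dimension but carried out in the motivic framework of Section \ref{blijeobereggreerg}.

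First, I would reduce to the case where $X$ itself has finite asymptotic dimension. Both source and target of $\mu_{E,X}$ define $u$-continuous coarse homology theories in $X$: the target $\Sigma E$ by hypothesis on $E$, and the source $E\cO^{\infty}\bP$ because $\bP(X)\simeq \colim_{U\in\cC}\Yo^{s}\cB(P_{U}(X))$ is itself $u$-continuous by Proposition \ref{foiwfewfewewfewwe}. Hence $\mu_{E,X}$ is equivalent to the colimit of the maps $\mu_{E,X_{U}}$ over $U\in\cC$, and by the cofinality assumption it suffices to prove that $\mu_{E,X_{U}}$ is an equivalence whenever $X_{U}$ has finite asymptotic dimension. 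Thus one may assume $X$ itself has finite asymptotic dimension, say $n$, and induct on $n$.

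For the base case $n=0$, the space $X$ is, for a cofinal family of entourages, coarsely a free union of uniformly bounded subsets. Then the Rips complex $P_{U}(X)$ at a large enough scale is a coproduct of (uniformly bounded) simplices, its cone is weakly flasque component-wise, and the cone fiber sequence from which $\mu_{E}$ is derived (Definition \ref{fiwjfoe8765rteffw}) collapses to show that $\mu_{E,X}$ is an equivalence. For the inductive step I would use the standard characterization of finite asymptotic dimension via decompositions: for every entourage $V$ there exist subsets $Y_{0},\dots,Y_{n}\subseteq X$ covering $X$ such that each $Y_{i}$ is, at scale $V$, a free union of uniformly $V$-bounded pieces. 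Splitting $X=A\cup B$ with $A=Y_{0}\cup\cdots\cup Y_{k-1}$ and $B=Y_{k}\cup\cdots\cup Y_{n}$, the coarse excision property applied simultaneously to $\Sigma E$ and to $E\cO^{\infty}\bP$ yields a morphism of cofiber sequences in which two of the three instances of the assembly map are equivalences by the induction hypothesis, forcing the third to be an equivalence as well.

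The main obstacle will be verifying that the Mayer--Vietoris squares supplied by the finite-asymptotic-dimension decomposition are genuinely coarsely excisive squares in $\BC$ (so that $E$ splits them) and that their images under $\bP$ remain pushouts in $\Sp\cB$ (so that the composite $E\cO^{\infty}\bP$ also splits them). This reduces to checking that the Rips complexes of a coarsely excisive decomposition assemble into a pushout of motivic uniform bornological coarse spectra in the colimit over $U\in\cC$, and that the resulting excisive squares are preserved by $\cO^{\infty}$. Once this compatibility is established, the induction runs formally and all the geometric content of the theorem is concentrated in the base case treated above.
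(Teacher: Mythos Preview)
Your inductive step has a genuine gap, and it is not the one you flagged. The standard asymptotic-dimension decomposition at scale $V$ produces subsets $Y_0,\dots,Y_n$ each of which is a $V$-separated union of uniformly bounded pieces, but this does \emph{not} imply that $A=Y_0\cup\cdots\cup Y_{k-1}$, $B=Y_k\cup\cdots\cup Y_n$, or $A\cap B$ has asymptotic dimension strictly smaller than $n$: the decomposition only witnesses behaviour at the single scale $V$, and at other scales an entirely different family $Y_0^{V'},\dots,Y_n^{V'}$ may be needed. So the induction hypothesis does not apply to the pieces you have written down. Worse, since the decomposition itself depends on $V$, you cannot fix $A$ and $B$ once and for all and then invoke Mayer--Vietoris for the coarse homology theories $\Sigma E$ and $E\cO^\infty\bP$, which see all scales simultaneously. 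Making an induction of this shape work requires substantially more than the sketch suggests.

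The paper sidesteps this by a different and much shorter route. It first observes (the Proposition preceding Corollary~\ref{fweoiowefewew}) that $\mu_{E,X}$ is an equivalence whenever $X$ is a \emph{discrete} bornological coarse space, because for such $X$ one has $P_{\diag}(X)\cong X$ and the cone boundary $\cO^\infty(X)\to\Sigma\Yo^s(F_\cU(X))$ is already an equivalence. Since $\mu_E$ is a natural transformation between colimit-preserving functors $\Sp\cX\to\bC$, the class of motives on which it is an equivalence is closed under colimits and hence contains all of $\Sp\cX\langle\mathrm{disc}\rangle$. The entire geometric content is then outsourced to the external result \cite[Thm.~6.114]{buen}, which shows that the finite-asymptotic-dimension hypothesis forces $\Yo^s(X)\in\Sp\cX\langle\mathrm{disc}\rangle$. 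The delicate decomposition argument you are reaching for lives inside the proof of that theorem, carried out once and for all at the level of motives rather than separately for each assembly map.
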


\begin{theorem}[Theorem~\ref{feoijofwefewfewf}]
Assume:
\begin{enumerate}
\item $\bC$ is compactly generated.
\item $E$ is weakly additive.
\item $E$ admits transfers. 
\item $X$ admits  a cofinal set of entourages $U$ such that $X_{U}$ has finite decomposition complexity.
\end{enumerate}
Then the coarse assembly map $\mu_{E,X}$ is an equivalence.
\end{theorem}

Let $K$ be a simplicial complex, and  let $ K_{d}$ be the corresponding uniform bornological coarse space  whose structures are induced from the path quasi-metric induced by the spherical metric on the simplices. We say that $K_{d}$ is obtained from $K$ by equipping this complex with the metric structures. The symbol 
  $F_{\cU}(K_{d})$ denotes the underlying bornological coarse space of $K_{d}$.
\begin{theorem}[Corollary~\ref{wrfiowefwewfewfw}]\label{efiwjfowefewfewfewfewf}
Assume:
\begin{enumerate}
\item $\bC$ is complete.
\item $E$ is 
additive and admits transfers.
 \item $K$ has bounded geometry.
\item $K_{d}$ is equicontinuously contractible.
\item $K_{d}$ admits a coarse scaling.
\end{enumerate}
Then the coarse assembly map 
$\mu_{E,F_{\cU}(K_{d})}$ is an equivalence.
\end{theorem}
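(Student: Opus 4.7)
The starting point is the cone cofiber sequence of Lemma~\ref{fewoiiofwefewf}: for every uniform bornological coarse space $Y$ there is a natural fiber sequence
$$E\cO(Y)\to E\cO^{\infty}(Y)\xrightarrow{\partial} \Sigma E(F_{\cU}(Y))\ ,$$
and, unwinding Definition~\ref{fiwjfoe8765rteffw}, the coarse assembly map $\mu_{E,X}$ at a bornological coarse space $X$ is obtained by applying $\partial$ to the Rips complexes $P_{U}(X)$, passing to the colimit over $U\in \cC$, and precomposing with the definition of $\bP(X)$. Applied to $X=F_{\cU}(K_{d})$, this suggests a two-step strategy: (1) show that the canonical comparison map $\bP(F_{\cU}(K_{d}))\to \Yo^{s}\cB(K_{d})$ induced by the projections $P_{U}(F_{\cU}(K_{d}))\to K_{d}$ becomes an equivalence after applying $E\cO^{\infty}$; and (2) show that $E\cO(K_{d})\simeq 0$, which makes the boundary map $\partial$ at $K_{d}$ an equivalence.

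For step~(1), the bounded geometry of $K$ ensures that for sufficiently large $U$ the Rips complex $P_{U}(F_{\cU}(K_{d}))$ is uniformly close to $K_{d}$ (the vertex sets are the same and the simplices subdivide the cells of $K_{d}$ in a controlled way). Equicontinuous contractibility of $K_{d}$ then provides canonical maps $K_{d}\to P_{U}(F_{\cU}(K_{d}))$ serving as homotopy inverses to the projections, up to equicontinuous homotopy. Countable additivity and the existence of transfers are used to descend the argument to the bornological decomposition of $K_{d}$ into countably many bounded cells and to pass between the discrete vertex data encoded by the Rips construction and the continuous complex $K_{d}$, while presentability of $\bC$ allows us to commute $E\cO^{\infty}$ with the colimit over $U$.

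For step~(2), the coarse scaling $s\colon K_{d}\to K_{d}$ together with the equicontinuous contracting homotopy produces an Eilenberg-swindle-type endomorphism of $\cO(K_{d})$: the iterated scalings $s,s^{2},s^{3},\dots$ extend to a self-map of the cone whose image accumulates towards the tip, and the equicontinuous homotopies between consecutive iterates assemble into a homotopy realizing $\mathrm{id}\simeq \mathrm{id}+s_{*}$ in a suitable sense on $\cO(K_{d})$, making it weakly flasque. Since $E$ is strong, it vanishes on weak flasques, whence $E\cO(K_{d})\simeq 0$, and $\partial$ is an equivalence on $K_{d}$.

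The main obstacle is step~(1): the motivic comparison $E\cO^{\infty}\bP(F_{\cU}(K_{d}))\simeq E\cO^{\infty}(K_{d})$ is where every technical hypothesis on $E$ (strong, countably additive, transfers) and on $\bC$ (presentable) is genuinely used, and a subtle point is that the comparison must be compatible with the cone functor $\cO^{\infty}$, not merely hold abstractly in $\Sp\cB$. Step~(2), by contrast, is a relatively standard scaling/swindle argument in coarse homotopy theory once the coarse scaling and the equicontinuous contracting homotopy are in hand.
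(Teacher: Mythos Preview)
Your two-step outline is correct and matches the paper's strategy: first replace $E\cO^{\infty}\bP(F_{\cU}(K_{d}))$ by $E\cO^{\infty}(K_{d})$, then show $E\cO(K_{d})\simeq 0$ via an Eilenberg swindle driven by the coarse scaling. However, you have the roles of the hypotheses reversed, and this reflects a genuine misunderstanding of where the difficulty lies.

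Step~(1) is \emph{not} where countable additivity, transfers, or presentability enter. The comparison $E\cO^{\infty}\bP(F_{\cU}(K_{d}))\simeq E\cO^{\infty}(K_{d})$ (and likewise for $E\cO$) follows from the purely geometric fact that $K_{d}$ is \emph{coarsifying} (Proposition~\ref{prop34redsfg}): bounded geometry plus equicontinuous contractibility give, via Lemma~\ref{lem1243treweq}, maps $g_{n}:P_{U_{n}}(F_{\cU}(K_{d}))\to K_{d}$ which are uniform homotopy inverses in the sense needed to make the comparison map $c_{K_{d}}:\Yo^{s}\cB(K_{d})\to \bP(\bF(K_{d}))$ an equivalence in $\Sp\cB$. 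Once this motivic equivalence is established, any local homology theory sends it to an equivalence; the only hypothesis on $E$ needed here is that $E$ be strong, so that $E\cO$ and $E\cO^{\infty}$ are local homology theories (Lemma~\ref{fewoiiofwefewf}). Your proposed mechanism for step~(1)---``descend to a bornological decomposition into countably many bounded cells'' using additivity and transfers---is not how the argument goes and would not work as stated.

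Step~(2) is where the remaining hypotheses are genuinely needed, and it is less routine than you suggest. The swindle of Proposition~\ref{ergioerogergregreg} requires transfers to make sense of the infinite sum $\sum_{n\ge 0}\tilde\cO(s^{n})$. More subtly, it requires the condition $E\cO^{\infty}(s)\simeq \id_{E\cO^{\infty}(K_{d})}$. A coarse scaling $s$ is in general only properly homotopic to the identity (in $\TopBorn$), not homotopic in $\UBC$, so the built-in homotopy invariance of $E\cO^{\infty}$ does not apply. This is precisely where countable additivity of $E$ and presentability of $\bC$ enter: they allow one to identify $E\cO^{\infty}$ on $K_{d}$ with a locally finite homology theory (Proposition~\ref{fifowefweewfwef} and Corollary~\ref{rgigoeri34345345345}), which has the stronger proper-homotopy invariance needed (Lemma~\ref{giwerogerggergregeg}). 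Without this, the swindle does not close.
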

 
Let $E\colon \BC \to \bC$  be a strong
coarse homology theory. 
Then $E\cO^{\infty}$ is a local homology theory, but in general it seems to be difficult to understand its values.
 Fortunately, if $E$ is in addition additive, then on nice spaces it behaves like a locally finite homology theory. Concretely, we have the following result.

Let $X$ be a uniform bornological coarse space.
\begin{prop}[Proposition~\ref{fifowefweewfwef}]
Assume:
\begin{enumerate}
\item $\bC$ is complete.
\item $E$ is   additive.
\item $X$ is homotopy equivalent in $\UBC$ to a countable, locally finite, finite-dimensional simplicial complex equipped with the metric structures.
\end{enumerate}
Then we have a natural equivalence
\begin{equation}\label{doij23oid23d23dd23d23d23d2d}
(\Sigma E(*)\wedge \Sigma^{\infty}_{+})^{\lf}( X)\simeq
E\cO^{\infty}(X)\, .
\end{equation}
\end{prop}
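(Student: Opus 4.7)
The plan is to construct the natural equivalence by comparing both functors via their behaviour on a cellular decomposition of the simplicial complex to which $X$ reduces. First, by hypothesis (4) and the homotopy invariance of the local homology theory $E\cO^{\infty}$ (Lemma \ref{fewoiiofwefewf}) together with the homotopy invariance of $(\Sigma E(*)\wedge \Sigma^{\infty}_{+})^{\lf}$, I would replace $X$ by the countable, locally finite, finite-dimensional simplicial complex $K$ that is equivalent to it.

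Next I would pin down the agreement on a point. For the right-hand side, the cone sequence underlying Definition \ref{eroiergegggg} applied to $*$ is the key computation: the cone $\cO(*)$ is essentially the ray $[0,\infty)$ and in particular flasque, so $E(\cO(*))\simeq 0$, and the cone boundary produces a natural equivalence $E\cO^{\infty}(*)\simeq \Sigma E(*)$. For the left-hand side, $(\Sigma E(*)\wedge \Sigma^{\infty}_{+})^{\lf}(*)\simeq \Sigma E(*)$ is immediate from the defining property of locally finite homology with coefficient spectrum $\Sigma E(*)$ on a point.

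With the two theories pinned down on $*$, I would extend to arbitrary \emph{finite} subcomplexes $L\subseteq K$ by induction on the skeletal filtration: the attachment pushouts expressing $L^{(n)}$ as $L^{(n-1)}$ together with finitely many $n$-cells produce Mayer--Vietoris squares that both functors convert into pushouts (using excision of $E\cO^{\infty}$ as a local homology theory, and the classical excisive behaviour of the locally finite theory). Since $K$ is finite-dimensional, the induction terminates at a uniform top dimension.

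The last step is to pass from finite subcomplexes to all of $K$, where hypotheses (1)--(3) enter. On a locally finite complex, the set of $n$-cells in a fixed dimension forms a countable, uniformly separated family, so passing to the local theory over this family gives a product; the countable additivity of $E$ (hypothesis (2)) makes $E\cO^{\infty}$ convert such free unions into products exactly as the locally finite homology on the left does. Smallness (hypothesis (3)) keeps us within the countable regime and presentability of $\bC$ (hypothesis (1)) guarantees the necessary countable products and limits exist. The main obstacle I expect is the coherence check: one must promote the above pointwise identifications to a single natural transformation of functors on $\UBC$ (most naturally built from the cone boundary map together with the comparison between the local homology theory $H^{\lf}\circ F_{\cC,\cU/2}$ of Lemma \ref{fiweofwefewfewfewf} and its locally finite avatar), and then verify that homotopy invariance, excision, and countable additivity are all compatible with this chosen transformation so that the equivalence on points propagates to a natural equivalence on all of $K$.
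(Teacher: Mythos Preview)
Your ingredients are correct --- the point computation $E\cO^{\infty}(*)\simeq\Sigma E(*)$, the dimension induction using excision, and the use of countable additivity to handle the infinite family of top cells are all present in the paper's argument (these are Lemma~\ref{feuhweifwfewwf} and Proposition~\ref{fiuwofewfewfewf}). But the organization differs, and the piece you flag as ``the main obstacle'' is exactly where the paper's proof does something you do not.

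The paper factors the comparison through two natural transformations rather than building one from scratch. Setting $F:=E\cO^{\infty}$, it first shows that the natural map $F\to F^{\lf}$ to the locally finite evaluation (Definition~\ref{fwjeofiewffewwfewfw}) is an equivalence on countable, locally finite, finite-dimensional complexes; this is the content of Lemma~\ref{feuhweifwfewwf} and Proposition~\ref{fiuwofewfewfewf}, and it is where the dimension induction and countable additivity enter. Second, it invokes the Weiss--Williams best homological approximation $F^{\%}\to F$, for which one has the explicit formula $F^{\%}(X)\simeq\Sigma^{\infty}_{+}(X)\wedge F(*)$, and shows that the induced map $(F^{\%})^{\lf}\to F^{\lf}$ is an equivalence on the same class of complexes (Lemma~\ref{egiojerogrgreg}). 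The composite gives the equivalence $(\Sigma E(*)\wedge\Sigma^{\infty}_{+})^{\lf}\simeq (E\cO^{\infty})^{\%,\lf}\simeq (E\cO^{\infty})^{\lf}\simeq E\cO^{\infty}$, and both arrows are \emph{natural} from the outset --- no coherence problem arises.

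Your proposal to first handle finite subcomplexes and then ``pass to all of $K$'' is not the right limiting procedure here: neither side is a filtered colimit over finite subcomplexes (the left is a limit by local finiteness, and $E\cO^{\infty}$ has no reason to commute with that colimit). The paper's direct induction on dimension, applied to the whole infinite complex at once, avoids this. More importantly, you should adopt the $F^{\%}\to F$ mechanism to produce the natural transformation; your vague suggestion involving the cone boundary and Lemma~\ref{fiweofwefewfewfewf} does not give a map in the required direction.
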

The left-hand side of \eqref{doij23oid23d23dd23d23d23d2d} is the value on the underlying topological  bornological space of $X$ of the locally finite version of the  homology represented by the object $\Sigma E(*)$, see Definition \ref{fwjeofiewffewwfewfw}.

 
The next proposition is a consequence of Proposition  \ref{fifowefweewfwef} applied to Rips complexes. It provides, under appropriate conditions, a calculation of the domain of the coarse assembly map.

Let $E$ be a strong coarse homology theory and $X$ be a bornological coarse space.
\begin{prop}[Proposition \ref{rgriugeroigregregerg}]\label{ewoijofwefewfewf}
Assume:
\begin{enumerate}
\item $\bC$ is complete.
\item $E$ is countably additive.
\item $X$ has  bounded geometry.
\end{enumerate}
Then  we have a natural equivalence
$$((\Sigma E(*)\wedge \Sigma^{\infty}_{+})^{\lf}\circ F_{\cC,\cU/2})\bP^{o}(X)\simeq E\cO^{\infty}\bP(X)\, .$$
\end{prop}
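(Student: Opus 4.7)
The strategy is to reduce the claim to Proposition~\ref{fifowefweewfwef} applied levelwise to the Rips complexes $P_U(X)$, and then take the filtered colimit over the coarse structure $\cC$ of $X$. By Definition~\ref{ergioerggrg546546} we have
$$\bP(X)\simeq \colim_{U\in\cC}\Yo^{s}\cB(P_U(X)).$$
Both functors $(\Sigma E(*)\wedge \Sigma^{\infty}_{+})^{\lf}\circ F_{\cC,\cU/2}$ and $E\cO^{\infty}$ are local homology theories on $\UBC$, the former by Lemma~\ref{fiweofwefewfewfewf} and the latter by Lemma~\ref{fewoiiofwefewf}. Hence by the universal property from Corollary~\ref{giugoeggergg} they extend uniquely to colimit-preserving functors on $\Sp\cB$. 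Applied to $\bP(X)$ they therefore become the colimits over $U\in\cC$ of their values on the individual Rips complexes, so it suffices to produce a natural equivalence between these values for every $U$.

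Next, I would verify that each $P_U(X)$ satisfies the hypotheses of Proposition~\ref{fifowefweewfwef}. Conditions~(1) and~(2) on $\bC$ and $E$ are restatements of our hypotheses. Since $X$ has bounded geometry, every $U$-ball in $X$ is finite of uniformly bounded cardinality, so $P_U(X)$ is locally finite and finite-dimensional as a simplicial complex; separability of $X$ makes its vertex set countable. Thus $P_U(X)$ itself is a countable, locally finite, finite-dimensional simplicial complex, so condition~(4) holds trivially, with the identity serving as the homotopy equivalence in $\UBC$. The remaining point is smallness of $P_U(X)$ in the sense of Definition~\ref{greioegerggereg}; I expect this to follow from the same separability and bounded geometry assumptions on $X$, which control both the bornology of the Rips complex and the local topological structure inherited from the spherical metric on simplices. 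This check is the main technical step and the place where the assumptions on $X$ are really consumed.

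Once the hypotheses are in place, Proposition~\ref{fifowefweewfwef} yields, for each $U\in\cC$, an equivalence
$$((\Sigma E(*)\wedge\Sigma^{\infty}_{+})^{\lf}\circ F_{\cC,\cU/2})(P_U(X))\simeq E\cO^{\infty}(P_U(X))$$
that is natural in the uniform bornological coarse space. Naturality ensures compatibility with the structure maps $P_U(X)\to P_{U'}(X)$ for $U\subseteq U'$ in $\cC$; passing to the colimit over $U\in\cC$ then produces the claimed equivalence between $((\Sigma E(*)\wedge\Sigma^{\infty}_{+})^{\lf}\circ F_{\cC,\cU/2})\bP(X)$ and $E\cO^{\infty}\bP(X)$, and naturality in $X$ is inherited from the naturality of the Rips complex construction.
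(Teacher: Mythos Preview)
Your overall architecture matches the paper's: reduce to Proposition~\ref{fifowefweewfwef} on each Rips complex and pass to the colimit over $U\in\cC$. The paper does exactly this.

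However, there is a genuine gap. You claim that ``since $X$ has bounded geometry, every $U$-ball in $X$ is finite of uniformly bounded cardinality'' and that ``separability of $X$ makes its vertex set countable.'' Neither follows. By definition, bounded geometry only means $X$ is \emph{coarsely equivalent} to a space of strongly bounded geometry; $X$ itself may have infinite, even uncountable, $U$-balls. Likewise, separability is a coarse condition (a countable $U$-net exists), not a cardinality bound on $X$. For example, $X=\R$ with the metric structures is separable and of bounded geometry, but $P_U(\R)$ has uncountably many vertices and is not locally finite. So your verification of the hypotheses of Proposition~\ref{fifowefweewfwef} fails as written.

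The paper fixes this in one line: since both sides of the desired equivalence are coarse homology theories (values of $(-)\bP$), they are coarsely invariant, so one may replace $X$ by a coarsely equivalent countable bornological coarse space of \emph{strongly} bounded geometry. Only after this replacement are the Rips complexes $P_U(X)$ genuinely countable, locally finite, and finite-dimensional, and Proposition~\ref{fifowefweewfwef} applies. You should insert this reduction step at the start; the rest of your argument then goes through, and the smallness check you flagged becomes automatic since such a simplicial complex is locally compact with the bornology of relatively compact subsets.
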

Note that $\bP^{o}$ denotes the open version of the universal coarsification.

Assume that $E\to E^{\prime}$ is a natural transformation of strong coarse homology theories such that $E(*)\to E^{\prime}(*)$ is an equivalence.
Then we can use Proposition \ref{ewoijofwefewfewf} to show for a bornological coarse space $X$ that
$E(X)\to E ^{\prime}(X)$ is an equivalence if the assembly maps
$\mu_{E,X}$ and $\mu_{E^{\prime},X}$ are equivalences, see Theorem \ref{fewoihjowefwfewfwef}.
The precise statement is the following:  
\begin{theorem}[Theorem~\ref{fewoihjowefwfewfwef}]\label{fewo09876fwef1}
Assume:  
\begin{enumerate}
\item $\bC$ is complete.
\item $E$ and $E^{\prime}$ are 
additive.
\item $E(*)\to E^{\prime}(*)$ is an equivalence.
\item $X$ has bounded geometry. 
\item\label{oifjowefewfefewfewfw9} The assembly maps $\mu_{E,X}$ and $\mu_{E^{\prime},X}$ are equivalences.
\end{enumerate}
Then $E(X)\to E^{\prime}(X)$ is an equivalence.
\end{theorem}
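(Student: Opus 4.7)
The plan is to apply Proposition~\ref{ewoijofwefewfewf} to both $E$ and $E'$, identify the domains of the two assembly maps in terms that depend only on the coefficient spectra, and then combine the coefficient equivalence of hypothesis~(3) with the assembly-map equivalences of hypothesis~(5) to conclude.

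More concretely, hypotheses (1), (2), and (4) are exactly what is needed to apply Proposition~\ref{ewoijofwefewfewf} to each of $E$ and $E'$. The equivalence it produces is natural in the coarse homology theory, so the transformation $E\to E'$ yields a commuting square
\begin{equation*}
\begin{CD}
((\Sigma E(*)\wedge\Sigma^{\infty}_{+})^{\lf}\circ F_{\cC,\cU/2})\bP(X) @>\sim>> E\cO^{\infty}\bP(X)\\
@VVV @VVV \\
((\Sigma E'(*)\wedge\Sigma^{\infty}_{+})^{\lf}\circ F_{\cC,\cU/2})\bP(X) @>\sim>> E'\cO^{\infty}\bP(X).
\end{CD}
\end{equation*}
By hypothesis~(3), the map $\Sigma E(*)\to\Sigma E'(*)$ on coefficient spectra is an equivalence in $\bC$. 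Smashing with $\Sigma^{\infty}_{+}(-)$, passing to the locally finite extension, and then composing with $F_{\cC,\cU/2}\circ\bP$ all preserve equivalences of the coefficient input, so the left vertical map is an equivalence, and the square forces the right vertical map $E\cO^{\infty}\bP(X)\to E'\cO^{\infty}\bP(X)$ to be an equivalence as well.

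Combining this with the naturality of the coarse assembly map (Definition~\ref{fiwjfoe8765rteffw}) gives a second square
\begin{equation*}
\begin{CD}
E\cO^{\infty}\bP(X) @>\mu_{E,X}>> \Sigma E(X)\\
@V\sim VV @VVV \\
E'\cO^{\infty}\bP(X) @>\mu_{E',X}>> \Sigma E'(X),
\end{CD}
\end{equation*}
whose two horizontal arrows are equivalences by hypothesis~(5). Therefore $\Sigma E(X)\to\Sigma E'(X)$ is an equivalence in $\bC$, and since $\bC$ is stable the suspension $\Sigma$ is invertible, so $E(X)\to E'(X)$ is itself an equivalence.

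The only substantive point in this argument is the naturality of the equivalence in Proposition~\ref{ewoijofwefewfewf} with respect to the coarse homology theory $E$. Its proof proceeds by assembling the pointwise equivalence~\eqref{doij23oid23d23dd23d23d23d2d} on the Rips complexes $P_{U}(X)$ into a colimit over the coarse structure of $X$; each of these ingredients is manifestly functorial in $E$, so the naturality is essentially bookkeeping once the proof of that proposition is in hand, and I do not expect any further obstacle.
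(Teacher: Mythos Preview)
Your proof is correct and follows essentially the same approach as the paper: the paper's proof draws a single commuting rectangle combining your two squares, invokes Proposition~\ref{rgriugeroigregregerg} (the in-body version of Proposition~\ref{ewoijofwefewfewf}) for the left horizontal equivalences, hypothesis~(3) for the left vertical equivalence, and hypothesis~(5) for the assembly maps, then concludes. Your explicit remark on naturality in $E$ and the final desuspension step are just spelled out more carefully than in the paper.
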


It is tempting to apply Theorem \ref{fewo09876fwef1} to the transformation 
$\mu_{E}\colon E\cO^{\infty}\bP\to \Sigma E$ in order to show that $\mu_{E,X}$ is an equivalence.
But in view of Assumption \ref{fewo09876fwef1}.\ref{oifjowefewfefewfewfw9} this would lead to a circular argument.

\paragraph{Acknowledgements}
\textit{The authors were supported by the SFB 1085 ``Higher Invariants'' funded by the Deutsche Forschungsgemeinschaft DFG. The second named author was also supported by the Research Fellowship EN 1163/1-1 ``Mapping Analysis to Homology'' of the Deutsche Forschungsgemeinschaft DFG.}

\textit{The first named author profited from many critical remarks by Denis-Charles Cisinski, Markus Land and the participants of the course on ``Coarse Geometry'' held in Regensburg in the years 2016/17, where parts of this material were first presented.}

\section{Uniform bornological coarse spaces}\label{eoijiqoegergegwegwerg}

In this section we   introduce the category of   uniform bornological coarse spaces and then discuss some basic {constructions} and examples.

Let $X$ be a set. A bornology on $X$ is a subset $\cB$ of the power set $\cP_{X}$
which is closed under forming finite unions, taking subsets, and which contains all one-point sets. 
The elements of $\cB$ are called the bounded subsets of $X$. 
A map between sets equipped with bornologies is said to be proper if preimages of bounded subsets are bounded.

A coarse structure on $X$ is a subset $\cC$ of $\cP_{X\times X}$ which contains the diagonal, is closed under forming  
finite unions and compositions (in the sense of correspondences), the symmetry, and taking subsets.  The elements of $\cC$ are called coarse entourages of $X$.  
A map between sets equipped with coarse structures is said to be controlled if it sends  coarse entourages to  coarse entourages.

A  bornology $\cB$ and a coarse structure $\cC$ on the same set are said to be compatible   if $\cB$ is stable under forming thickenings by coarse entourages in $\cC$ (see \cite[Def.~2.6]{buen}).

A set equipped with compatible bornological and coarse structures is an object of the category $\BC$ of  bornological coarse spaces.  A morphism between bornological coarse spaces 
is a proper and controlled map. We refer to \cite[Ch.~2]{buen} for a  detailed study of the category $\BC$.

Finally, {a} uniform structure on a set $X$ is   a subset $\cU$ of $\cP_{X\times X}$ which 
 is closed under forming  
finite intersections, compositions, the symmetry, and taking supersets. In addition we require that every element $U$ of $\cU$ contains the diagonal and admits a subset $V$ in $\cU$ such that $V\circ V\subseteq U$ ($V\circ V$ denotes the composition of $V$ with itself). The elements of $\cU$ are called uniform entourages.  A uniform space is a set equipped with a uniform structure.  
A map between the underlying sets of uniform spaces is called uniformly continuous if
preimages of uniform entourages are uniform entourages. 

A uniform structure $\cU$ and a coarse structure $\cC$ on the same set are said to be compatible  if $\cU\cap \cC\not=\emptyset$  (see \cite[Def.~5.4]{buen}). For more details we refer to \cite[Ch.~5.1]{buen} and \cite[Sec.~9.1]{equicoarse}.

A bornological coarse space with an additional compatible uniform structure
is called a uniform bornological coarse space.
We let $\UBC$ denote the category of   uniform bornological coarse spaces
and proper, controlled and uniformly continuous maps. 

A bornology   $\cB$ and a topology $\cS$ on the same set $X$ 
are called compatible if  $\cB\cap \cS$ is cofinal in $\cB$, and $\cB$ is closed under forming closures (\cite[Def.~7.1]{buen}). 
A set with a  bornology   $\cB$ and a compatible topology $\cS$ is called a topological bornological space. We  let $\BT$ be the category of topological bornological spaces and continuous and proper maps, see \cite[Sec.~7.1.1]{buen}.

 A uniform structure $\cU$ on a set naturally induces a topology generated by the $U$-thickenings $U[\{x\}]$ of the one-points sets  for all $U$ in $\cU$ and $x$ in $X$.
 We have a forgetful functor
$$F_{\cC,\cU/2}\colon \UBC\to \BT$$
which forgets the coarse structure and only  remembers the bornology and the topology induced from the uniform structure.
\color{black}




Let $(X,\cU)$ be a uniform space.
\begin{ddd}
The coarse structure associated to the uniform structure is defined by
$$\cC(\cU):=\bigcap_{V\in \cU} \cC\langle  \{V \}\rangle\, .$$
Here $ \cC\langle  \{V \}\rangle$ denotes the coarse structure generated by $V$ (\cite[Ex.~2.12]{buen}).
\end{ddd}

\begin{ex}
The coarse structure $\cC(\cU)$ is not necessarily compatible with the uniform structure $\cU$, as the following example shows. We let $X := \{0\} \cup \{1/n \:|\: n \in \IN\}$. We further define  the uniform structure $\cU$ of $X$  to be the one induced from the metric on $X$ induced from the canonical inclusion $X \subseteq \IR$. This uniform structure is generated by the uniform entourages $U_r$ for all $r > 0$ given by $U_r := \{(x,y) \in X \:|\: d(x,y) < r\}$. We now observe that  $\cC(\cU) $ is the minimal coarse structure consisting of all subsets of the diagonal. It is not compatible with $\cU$.
\hB
\end{ex}

\begin{ex} 
The notion of a quasi-metric on a set is defined similary as the notion of a metric where one in addition allows that points have infinite distance.
For example, a disjoint union of metric spaces is naturally a quasi-metric space. The definition of a coarse structure associated to a metric \cite[Ex.~2.18]{buen} generalizes immediately to the case of quasi-metric spaces.  Similarly, a quasi-metric also induces a uniform structure.

We consider a quasi-metric space with the induced coarse and uniform structures $\cC$ and $\cU$. They are compatible. 
If the space is in addition  a path quasi-metric space (i.e. each path component is a path metric space, and different path components  have infinite distance {to each other}), then we have the equality $\cC=\cC(\cU)$. In particular, in this case $\cC(\cU)$ is compatible with $\cU$.
\hB
\end{ex}


\begin{rem}
A map between metric spaces $f:(X,d)\to (X^{\prime},d^{\prime})$ is called uniformly continuous if for every $\delta$ in $(0,\infty)$ there exists an $\epsilon$ in $(0,\infty)$ such that for all pairs of points $x,y$ of $X$ with $d(x,y)\le\epsilon$ we have $d(f(x),f(y))\le \delta$.
 A uniformly continuous map between metric spaces in this sense
 is uniformly continuous as a map between the associated uniform spaces. \hB
\end{rem}

\begin{ex}\label{ifowfewefewfw}
Let $X$ be a simplicial complex. Then $X$ has a canonical spherical path quasi metric which induces a coarse structure $\cC$ and a compatible uniform structure $\cU$. 

A choice of a set $A$ of sub-complexes
generates a bornology $\cB \coloneqq \cB\langle A\rangle$. It is compatible with the coarse structure if for every entourage $U$ in $\cC$ and every sub-complex $K$ in $A$ there exists another sub-complex $K^\prime$ in $A$ with $U[K] \subseteq K^\prime$. The triple $(X,\cC,\cB,\cU)$ is a uniform bornological coarse space. 

If $X^{\prime}$ is a second simplicial complex with a choice of a set 
$A^{\prime}$ of sub-complexes and  $f\colon X\to X^{\prime}$ is a simplicial map such that for every $Y^{\prime}$  in $A^{\prime}$ we have $f^{-1}(Y^{\prime})\in \cB\langle A\rangle$, then
$f$ is a morphism of uniform bornological coarse spaces.
\hB
\end{ex}

\begin{ex}\label{efwoiuweofwefewfewfw}
 If $X$ is a bornological coarse space  and $U$ is a coarse entourage of $X$, then we consider the simplicial complex
$P_{U}(X)$ of probability measures on $X$ which have finite, $U$-bounded support. For a subset $Y$ of $X$ we let $P_{U}(Y)$ denote the sub-complex of $P_{U}(X)$ of measures supported on $Y$.
We  let $A$ be the set of sub-complexes  $P_{U }(B)$  for all bounded subsets $B$ of $X$. The constructions explained in Example \ref{ifowfewefewfw} turn   $P_{U}(X)$ into a uniform bornological coarse space.

Let $f \colon X\to X^{\prime}$ be a morphism between bornological coarse spaces. Then there exists a coarse entourage    $U^{\prime}$   of $X^{\prime}$ such that $(f\times f)(U)\subseteq U^{\prime}$. The push-forward of measures provides a morphism $P_{U}(X)\to P_{U^{\prime}}(X^{\prime})$  between uniform bornological coarse spaces in a functorial way.
\hB
\end{ex}

\begin{ex}
Let $X$ be a uniform bornological coarse space. If $Y$ is a subset of $X$, then $Y$ has an induced  uniform bornological coarse structure. If not said differently, we will always consider subsets with the induced structures. The inclusion $Y\to X$ is then a morphism between uniform bornological coarse spaces.
\hB
\end{ex}

\section{Local homology theories}

In this section we introduce the notion of a local homology theory.
We will actually consider two variants which are distinguished by the details of the excision axiom which
involves closed or open decompositions.

\begin{rem}\label{qrgoijherwiogwerg}
The reason for considering two variants is that  we want to apply the theory to two examples with different properties.
On the one hand, analytic $K$-homology naturally satisfies excision for closed decompositions
\cite[Prop.~7.62]{buen} (applied to $K^{\an,\lf}$).
On the other hand, the suspension spectrum functor $\Sigma^{\infty}_{+}$ naturally satisfies open excision \cite[Ex.~7.40]{buen}. 
\hB
\end{rem}

Let $(X,\cU)$ be a uniform space, and let $A$ and $B$ be subsets  of $X$ with $A\cup B=X$. 

\begin{ddd}
$(A,B)$ is called a closed ({or} open) decomposition, if $A$ and $B$ are closed (resp.~open).
\end{ddd}
For an entourage $U$ let $\cP_{X\times X}^{\subseteq U}$ denote the set of elements of $\cP_{X\times X}$ (the power set of $X\times X$) which are  contained in $U$.
The following is taken from \cite[Def.~{5.19}]{buen}:
\begin{ddd}\label{ifpopwefwfs}
The pair $(A,B)$  is a uniformly excisive decomposition of $X$ if
there exists a uniform entourage $U$ and a function
$\kappa \colon \cP_{X\times X}^{\subseteq U} \to \cP_{X\times X}$ such that:
\begin{enumerate}
\item\label{fuwehfiwuefwfe} The restriction of  $\kappa$ to $\cU\cap \cP_{X\times X}^{\subseteq U}$ is $\cU$-admissible (see Remark \ref{wthgwrfrefs}).
\item For every
$W$ in $\cP_{X\times X}^{\subseteq U}$ we have
$W[A]\cap W[B]\subseteq \kappa(W)(A\cap B)$.
\end{enumerate}
\end{ddd}

\begin{rem}\label{wthgwrfrefs}
Note that in Definition \ref{ifpopwefwfs}  we consider $\cP_{X\times X}^{\subseteq U}$ and $\cP_{X\times X}$ as partially ordered sets with the order relation given by the opposite of the inclusion relation.  By definition, a function between partially ordered sets is order preserving.

Condition~\ref{ifpopwefwfs}.\ref{fuwehfiwuefwfe} means that for every $V$ in $\cU$ there exists $W$ in $ \cU\cap \cP_{X\times X}^{\subseteq U}$ such that $\kappa(W)\subseteq V$.
\hB
\end{rem}

For a coarse space $(X,\cC)$ the notion of a coarsely excisive decomposition \cite[Def.~3.40]{buen} is defined similarly. We again consider two subsets $A$ and $B$ of $X$ such that $X=A\cup B$.

\begin{ddd}
The pair $(A,B)$  is a   coarsely excisive decomposition of the space $X$ if for every 
$V$ in $\cC$ 
there exists a 
$W$  in $\cC$ 
such that we have
$V[A]\cap V[B]\subseteq W(A\cap B)$.
\end{ddd}

%
%
%

%

%

%
%

Let $(X,\cU)$ be a uniform space.
\begin{lem}\label{foiwefefewfewf}
{If $\cC(\cU)$ is compatible with $\cU$, then any} uniformly excisive  decomposition $(A,B)$ of $X$ is coarsely excisive  for  the coarse structure $\cC(\cU)$.
\end{lem}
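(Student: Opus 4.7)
The plan is to use the compatibility of $\cC(\cU)$ with $\cU$ to extract an entourage $U_{0}$ that is simultaneously coarse and uniform, apply uniform excision once at $U_{0}$, and then connect this local statement to an arbitrary coarse entourage by iterating along a $U_{0}$-chain joining $A$ and $B$. Let $U\in\cU$ together with $\kappa:\cP(X\times X)_{\subseteq U}\to\cP(X\times X)$ be the data of Definition~\ref{ifpopwefwfs}; after replacing $U$ by $U\cap U^{-1}$ (and restricting $\kappa$ accordingly) I may assume $U$ is symmetric. By compatibility, pick a symmetric $U_{1}\in\cU\cap\cC(\cU)$, and by the $\cU$-admissibility of $\kappa$ choose a symmetric $W\in\cU\cap\cP(X\times X)_{\subseteq U}$ with $\kappa(W)\subseteq U_{1}$. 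Set $U_{0}:=W\cap U_{1}$. Then $U_{0}$ is symmetric, contains the diagonal, lies in $\cU\cap\cC(\cU)\cap\cP(X\times X)_{\subseteq U}$, and satisfies $\kappa(U_{0})\subseteq\kappa(W)\subseteq U_{1}\in\cC(\cU)$, so in particular $\kappa(U_{0})\in\cC(\cU)$.

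Now let $V\in\cC(\cU)$ be arbitrary. Since $\cC(\cU)\subseteq\cC\langle\{U_{0}\}\rangle$ and $U_{0}$ is symmetric and contains the diagonal, there is $n\geq 1$ with $V\subseteq U_{0}^{n}$. Given $x\in V[A]\cap V[B]$, choose $a\in A$ and $b\in B$ with $(a,x),(b,x)\in V\subseteq U_{0}^{n}$. Symmetry of $U_{0}$ gives $(a,b)\in U_{0}^{2n}$, so there is a chain $a=y_{0},y_{1},\dots,y_{2n}=b$ with $(y_{j-1},y_{j})\in U_{0}$ for each $j$. Since $A\cup B=X$ and the chain begins in $A$ and ends in $B$, there must be an index $j$ with $y_{j}\in A$ and $y_{j+1}\in B$; using the diagonal on the $A$-side and the symmetric edge $(y_{j+1},y_{j})\in U_{0}$ on the $B$-side, one gets $y_{j}\in U_{0}[A]\cap U_{0}[B]$. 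Uniform excision applied at $U_{0}$ then produces $c\in A\cap B$ with $(c,y_{j})\in\kappa(U_{0})$, while the triangle estimate $(y_{j},x)\in U_{0}^{j+n}\subseteq U_{0}^{3n}$ yields $(c,x)\in\kappa(U_{0})\circ U_{0}^{3n}$. Thus $V[A]\cap V[B]\subseteq W_{V}[A\cap B]$ with $W_{V}:=\kappa(U_{0})\circ U_{0}^{3n}$, and $W_{V}\in\cC(\cU)$ since both factors are.

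I expect the main obstacle to be precisely the step where one promotes $\kappa(U_{0})$ from being merely uniform to being coarse: the $\cU$-admissibility condition controls only the uniform size of the images of $\kappa$, so on its own it does not supply a coarse witness. Compatibility, embodied by the existence of $U_{1}\in\cU\cap\cC(\cU)$, is exactly the extra ingredient that lets one shrink $U_{0}$ so that $\kappa(U_{0})\subseteq U_{1}$, thereby guaranteeing $\kappa(U_{0})\in\cC(\cU)$; without this, the composition $\kappa(U_{0})\circ U_{0}^{3n}$ could escape $\cC(\cU)$ and the path argument would fail to produce a valid coarse excision witness.
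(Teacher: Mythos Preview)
Your proof is correct and follows essentially the same approach as the paper's: both arguments produce a $U_{0}$-chain linking a point of $A$ to a point of $B$, locate a crossing, and apply uniform excision once at that crossing to land in $A\cap B$. The only notable difference is that you are more explicit (and arguably more careful) about the preliminary step: the paper simply asserts that ``we can assume that $U$ is also a coarse entourage and $\kappa(W)$ is a coarse entourage for every $W$ in $\cU\cap\cP(X\times X)_{\subseteq U}$,'' whereas you actually carry out the construction by using $\cU$-admissibility to shrink to a $U_{0}$ with $\kappa(U_{0})\subseteq U_{1}\in\cC(\cU)$. Your final paragraph correctly identifies this as the place where compatibility is genuinely needed. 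The minor bookkeeping differences (your witness $\kappa(U_{0})\circ U_{0}^{3n}$ versus the paper's $U^{2n+1}\circ\kappa(U)$) come from routing the chain through $x$ versus directly from $a$ to $b$, and are immaterial.
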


\begin{proof}
Let $U$ and $\kappa$ be as in Definition \ref{ifpopwefwfs}. Since $\cC(\cU)$ is compatible with $\cU$, after replacing $U$ by a smaller uniform entourage if necessary, we can assume that $U$ is also a coarse entourage, and that $\kappa(W)$ is a coarse entourage for every $W$ in $\cU\cap \cP_{X\times X}^{\subseteq U}$.

Let $V$ be an entourage in $\cC(\cU)$.
Then there exists  an integer $n$ such that $V\subseteq U^{n}$.
We claim that
\[
V[A]\cap V[B]\subseteq  (U^{2n+1}\circ \kappa(U))(A\cap B)\,.
\]

Let $z$ be a point in $V[A]\cap V[B]$. Then there exists integers $r$ and $s$ with $r\le n$ and $s\le n$ and 
 a   sequence of points $(x_{0},\dots,x_{r+s})$
   in $X$ such that $x_{0}\in A$, $x_{r+s} \in B$,  and
 $(x_{i},x_{i+1})\in U$ for all  $i=0,\dots,r+s-1$. 
 There exists $i_{0}$ in $ \{0,\dots,r+s-1\}$ such that
 $x_{i_{0}}\in A$ and $x_{i_{0}+1}\in B$. But then
 $x_{i_{0}+1}\in U[x_{i}]$, i.e., $x_{i_{0}+1}\in U[A]\cap U[B]$. Since $(A,B)$ is uniformly excisive by assumption there
exists a point $y$ in $A\cap B$ such that
$(x_{i_{0}+1},y)\in \kappa(U)$. This now implies that
$z\in (U^{2n+1}\circ \kappa(U))(A\cap B)$ as asserted.
\end{proof}
  
\begin{ex}\label{wfiofwefewfwefewf}
On a path quasi-metric space every closed decomposition is coarsely and uniformly excisive \cite[Ex.~5.21]{buen}.
\hB
\end{ex}

Let $\bC$ be a cocomplete stable $\infty$-category and consider a functor
  $E \colon \UBC\to \bC$.
\begin{ddd}\label{reffoiweffewff}
  We say that $E$ satisfies closed (open) excision if $E(\emptyset)\simeq 0$ and for every   \ubs  $X$    and uniformly  and coarsely excisive closed (open) decomposition  $(A,B)$ of $X$  the square
  $$\xymatrix{E(A\cap B)\ar[r]\ar[d]&E(A)\ar[d]\\E(B)\ar[r]&E(X)}$$
  is cocartesian.
\end{ddd}


The categories $\BC$ and $\UBC$ have symmetric monoidal structures denoted by $\otimes$ (see  \cite[Ex.~2.32]{buen} for the latter) such that the forgetful functor
 $\UBC\to \BC$  has a symmetric monoidal refinement and the underlying uniform space of a tensor product is a cartesian product of the underlying uniform space of the factors. 


 The unit interval $[0,1]$ has a canonical uniform bornological coarse structure (the maximal coarse and bornological structure, and the metric uniform structure). 
 The product
 $[0,1]\otimes X$ is now defined, and the projection $[0,1]\otimes X\to X$ is a morphism of uniform bornological coarse spaces since $[0,1]$ is bounded.

Let $E \colon \UBC\to \bC$ be a functor.
\begin{ddd}
 We say that $E$ is homotopy invariant if for every uniform bornological coarse space $X$ the morphism  $E([0,1]\otimes X)\to E(X)$ induced by the projection is an equivalence.
 \end{ddd}
 
A homotopy between morphisms $f_{0},f_{1}:X\to Y$ of uniform bornological coarse spaces is a morphism $h:[0,1]\otimes X\to Y$ which restricts to $f_{i}$ at the endpoints of the interval.
If $E$ is homotopy invariant, then for homotopic morphisms $f_{0},f_{1}$ we have $E(f_{0})\simeq E(f_{1})$.

A uniform bornological coarse space $X$ is called flasque with flasqueness implemented by  a morphism $f\colon X\to X$ if $f$  implements flasqueness in the sense of bornological coarse spaces \cite[Def.~3.21]{buen} and $f$ is in addition 
 homotopic to the identity.

Let $E\colon \UBC\to \bC$ be a functor.
\begin{ddd}
We say that $E$  vanishes on flasques if $E(X)\simeq 0$ for every flasque uniform bornological coarse space $X$.
\end{ddd}

Let $X$ be a uniform bornological coarse space and $U$ be an entourage  which is both coarse and uniform. Then for every
coarse entourage $V$ such that $U\subseteq V$ we can replace the coarse structure by the coarse structure generated by $V$ and obtain a uniform bornological coarse space $X_{V}$. 
Hence, for a uniform bornological coarse space $X$ the \ubs $X_{V}$ is well-defined for sufficiently large coarse entourages $V$. We have a canonical morphism $X_{V}\to X$ given by the identity of the underlying sets. Hence
the colimit and the canonical morphism in the following definition have a well-defined interpretation. 

Let $E\colon \UBC\to \bC$ be a functor.
\begin{ddd}
We say that $E$ is $u$-continuous if for every \ubs $X$ the canonical morphism
$\colim_{V\in \cC} E(X_{V})\to E(X)$ is an equivalence.
\end{ddd}

Let $\bC$ be a cocomplete stable $\infty$-category, and let $E:\UBC\to \bC$ be a functor.
\begin{ddd}\label{ioojwfefewfwfw}
$E$ is called a closed (open) 
local homology theory if the following conditions are satisfied:
\begin{enumerate}
\item $E$ satisfies closed (open) excision.
\item $E$ is homotopy invariant.
\item $E$ vanishes on flasques.  
\item $E$ is $u$-continuous.
\end{enumerate}
\end{ddd}

We have a forgetful functor
\begin{equation}
\label{eqkj4zrtet}
F_{\cU} \colon \UBC\to \BC
\end{equation}
which forgets the uniform structure.

Let $E:\BC\to \bC$ be a functor.
\begin{lem}\label{fiowefewfewfwe}
If $E $ is a   coarse   homology theory, then $E\circ F_{\cU} \colon \UBC\to \bC$ is a closed and open local homology theory.
\end{lem}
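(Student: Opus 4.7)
The plan is to verify the four axioms of a local homology theory (Definition \ref{ioojwfefewfwfw}) one by one for the composition $E\circ F_{\cU}$, each time reducing to the corresponding property of $E$ on $\BC$ via the forgetful functor. The key observation throughout is that $F_{\cU}$ preserves the bornological and coarse data; subsets, products with $[0,1]$, flasqueness maps, and the thickening construction $X_{V}$ all commute with $F_{\cU}$ in the obvious way.

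First, for excision, let $(A,B)$ be a uniformly and coarsely excisive closed decomposition of a uniform bornological coarse space $X$. By assumption $(A,B)$ is in particular coarsely excisive for the coarse structure of $X$, and this coarse structure is by definition the coarse structure of $F_{\cU}(X)$; moreover the induced bornological coarse structures on $A$, $B$ and $A\cap B$ as subsets of $F_{\cU}(X)$ agree with those inherited from $X$. Coarse excisivity of $E$ then produces the required pushout square. For homotopy invariance, note that $F_{\cU}([0,1]\otimes X)=[0,1]\otimes F_{\cU}(X)$ and the projection $[0,1]\otimes F_{\cU}(X)\to F_{\cU}(X)$ is a coarse equivalence: the section $x\mapsto(0,x)$ is a right inverse, and the other composite $(t,x)\mapsto(0,x)$ is close to the identity because $[0,1]$ is bounded, so the corresponding entourage is contained in $\{0\}\times[0,1]\times \mathrm{diag}_{X}$. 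Coarse invariance of $E$ therefore turns the projection into an equivalence.

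For vanishing on flasques, if $X$ in $\UBC$ is flasque with flasqueness implemented by $f:X\to X$, then by definition $f$ already implements flasqueness of $F_{\cU}(X)$ in $\BC$ (the uniform homotopy condition is additional data that we simply discard), so $E(F_{\cU}(X))\simeq 0$. Finally, for $u$-continuity, fix an entourage $U$ of $X$ which is both coarse and uniform; then $F_{\cU}(X_{V})=F_{\cU}(X)_{V}$ for every coarse entourage $V\supseteq U$, and the cofinal system $\{V\in\cC \mid U\subseteq V\}$ computes the same colimit over $\cC$ used in the $u$-continuity of $E$. Hence $\colim_{V}E(F_{\cU}(X_{V}))\simeq\colim_{V}E(F_{\cU}(X)_{V})\to E(F_{\cU}(X))$ is an equivalence by $u$-continuity of $E$. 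No step is a genuine obstacle; the mild subtlety to watch is the compatibility of subspace structures in the excision step and the cofinality argument in the $u$-continuity step.
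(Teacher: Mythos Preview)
Your proof takes the same route as the paper's and handles homotopy invariance, vanishing on flasques, and $u$-continuity correctly---in fact more explicitly than the paper, which dispatches these three as ``clear.'' The one point to watch is the excision step. In \cite{buen} excision for a coarse homology theory is \emph{not} axiomatized via coarsely excisive decompositions but via \emph{complementary pairs}; so the sentence ``coarse excisivity of $E$ then produces the required pushout square'' is implicitly invoking a non-trivial bridge, namely that a coarse homology theory (in the complementary-pair sense) sends every coarsely excisive decomposition to a pushout square. This is precisely \cite[Lem.~3.38]{buen}, which the paper cites at this point, and the paper's own remark immediately following the lemma singles this out as the only non-trivial step in the argument. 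Your proof becomes complete once you either cite that lemma or reproduce its short argument; everything else is fine.
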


\begin{proof}
It follows from the definition of a coarse homology theory \cite[Def.~4.22]{buen}
 that
 $E\circ F_{\cU}$  is homotopy invariant, $u$-continuous, and vanishes on flasques.
 The functor $F_{\cU}$ sends uniformly and  coarsely excisive  closed or open decompositions  to coarsely excisive ones. Hence by \cite[Cor.~4.28]{buen} the composition
 $E\circ F_{\cU}$ is excisive.  
\end{proof}

\begin{rem}
The reason that the proof of the Lemma \ref{fiowefewfewfwe} is not completely trivial is that excision for coarse homology theories was not defined in terms of coarsely excisive  decompositions but with complementary pairs, see \cite[Def. 4.22.1]{buen}. Our main reason for doing this was that the intersection of
a  coarsely excisive  decomposition with a subset {needs not to be} coarsely excisive, while {the} intersection with subsets preserves complementary pairs. 
\hB
\end{rem}


A locally finite homology theory in the sense of \cite[Def.~7.27]{buen} is a functor
$\BT\to  \bC$ with a complete and cocomplete stable target which is homotopy invariant, locally finite and satisfies weak excision  \cite[Def.~7.26]{buen}. It will
will be called closed or open if it satisfies excision for  closed or open, respectively, decompositions. 
 
\begin{ex}
An example of a closed spectrum-valued locally finite homology theory is the analytic $K$-homology $K^{\an,\lf}$ constructed in \cite[Def.~7.66]{buen}.
 
An example of a locally finite homology theory satisfying open excision is the locally finite version of stable homotopy $\Sigma^{\infty}_{+}(-)^{\lf}$, see \cite[Ex.~7.40]{buen}.
\hB
\end{ex}

 We assume that $\bC$ is a complete and cocomplete stable $\infty$-category.
  Let $E \colon \BT\to \bC$ be a   locally finite  homology theory.   
 \begin{lem}\label{fiweofwefewfewfewf}
 If $E$ is closed (open), then
  $E\circ F_{\cC,\cU/2}$ is a 
  closed (open) local homology theory. 
 \end{lem}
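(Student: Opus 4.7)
The plan is to verify the four axioms of Definition \ref{ioojwfefewfwfw} for $E' := E\circ F_{\cC,\cU/2}$ in turn, with two being essentially automatic, one being straightforward, and one being the main point.

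$u$-continuity holds trivially because $F_{\cC,\cU/2}$ discards the coarse structure, so $F_{\cC,\cU/2}(X_{V}) = F_{\cC,\cU/2}(X)$ canonically for every coarse entourage $V$, and the structure map of the colimit is an equivalence. For excision, if $(A,B)$ is a uniformly and coarsely excisive closed decomposition of a uniform bornological coarse space $X$, then $(F_{\cC,\cU/2}(A), F_{\cC,\cU/2}(B))$ is a closed decomposition of $F_{\cC,\cU/2}(X)$ in $\BT$, because $F_{\cC,\cU/2}$ preserves the bornology and the topology (the latter being the topology induced by the uniform structure) and closed subsets remain closed after forgetting. The assumed closedness of $E$ then produces the required pushout square. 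For homotopy invariance, the underlying topological bornological space of $[0,1]\otimes X$ is $[0,1]\times F_{\cC,\cU/2}(X)$ equipped with the product topology and product bornology, i.e.\ the standard cylinder on $F_{\cC,\cU/2}(X)$ in $\BT$; homotopy invariance with respect to this cylinder is part of the definition of a locally finite homology theory in \cite[Def.~6.60]{buen}.

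The only step requiring real work is vanishing on flasques. Let $X$ be a UBC-flasque space with implementing morphism $f : X \to X$. The uniform homotopy between $f$ and $\mathrm{id}_{X}$ is a morphism $[0,1]\otimes X \to X$ in $\UBC$, and after applying $F_{\cC,\cU/2}$ it becomes a $\BT$-morphism, i.e.\ a proper homotopy between $F_{\cC,\cU/2}(f)$ and the identity. Homotopy invariance of $E$ together with functoriality therefore yields $E'(f^{k})\simeq E'(\mathrm{id}_{X})$ for every $k\geq 0$. On the other hand, the third clause of BC-flasqueness ensures that for every bounded subset $B\subseteq X$ there is an integer $k$ with $f^{k}(X)\subseteq X\setminus B$, so $f^{k}$ factors as $X \to X\setminus B \hookrightarrow X$ in $\BT$. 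Consequently the inclusion $E'(X\setminus B)\to E'(X)$ is a split epimorphism and
\[\Cofib\bigl(E'(X\setminus B)\to E'(X)\bigr)\;\simeq\;0\,.\]
The local finiteness condition \eqref{f34hio34t34g3g3g3gg} then gives $E'(X)\simeq \lim_{B}0\simeq 0$.

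The main obstacle is this last step, since it is the only place where the two pieces of UBC-flasqueness (the bornological shift-to-infinity inherited from BC, and the proper homotopy coming from the uniform side) must be combined, and it is the only step where the local finiteness axiom is genuinely used. The remaining three axioms follow by a more or less direct matching of the relevant definitions through $F_{\cC,\cU/2}$.
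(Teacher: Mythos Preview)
Your treatment of $u$-continuity, excision, and homotopy invariance is correct and coincides with the paper's argument. For vanishing on flasques the paper simply observes that $F_{\cC,\cU/2}$ carries a flasque uniform bornological coarse space to a flasque topological bornological space and then invokes \cite[Lem.~6.54]{buen}; you attempt to prove that lemma directly, and there is a gap.

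From the factorisation of $E'(f^{k})\simeq \id_{E'(X)}$ through $E'(X\setminus B)$ you correctly conclude that $\iota\colon E'(X\setminus B)\to E'(X)$ is a split epimorphism, but this does \emph{not} force $\Cofib(\iota)\simeq 0$: in a stable $\infty$-category the cofibre of a split epimorphism is the suspension of its fibre, which vanishes only when the map is already an equivalence. What your argument actually establishes is that each projection $q_{B}\colon E'(X)\to \Cofib(\iota)$ is individually nullhomotopic, and this alone does not kill the limit (a $\lim^{1}$-term obstructs the conclusion on homotopy groups). The missing step uses the \emph{properness} of the homotopy $H$ between $\id_{X}$ and $f^{k}$: since $H^{-1}(B)$ is bounded it lies in $[0,1]\times B'$ for some bounded $B'\supseteq B$, so $H$ is a homotopy of pairs $(X,X\setminus B')\to (X,X\setminus B)$ between $\id$ and $f^{k}$; as $f^{k}$ sends all of $X$ into $X\setminus B$, the transition map $\Cofib_{B'}\to \Cofib_{B}$ is therefore null. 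Thus the inverse system $\{\Cofib_{B}\}_{B}$ is pro-zero, its limit vanishes, and local finiteness then gives $E'(X)\simeq 0$.
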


\begin{proof}
 Homotopy invariance    of $E$ implies homotopy invariance of $E\circ F_{\cC,\cU/2}$. {The functor $F_{\cU}$ sends uniformly and  coarsely excisive  closed (open) decompositions to closed (open) decompositions. Since we assume that $E$ is closed (open) the composition $E\circ F_{\cC,\cU/2}$ satisfies closed (open) excision.} Since $F_{\cC,\cU/2}$  forgets the coarse structure, the composition $E\circ F_{\cC,\cU/2}$ is $u$-continuous.

The functor $F_{\cC,\cU/2}$ sends flasque \ubs to topological bornological spaces which are flasque in the sense of   \cite[Def.~7.19]{buen}. Since $E$ vanishes on flasque topological bornological spaces  by \cite[Lem.~7.21]{buen} we conclude that $ E\circ F_{\cC,\cU/2}$ vanishes on flasques.
\end{proof}


%


\section{Motives and the universal local homology theory}

In this section we construct the universal closed  and open  local homology theories. We will write out the details for the closed case. The open case is completely analogous and is obtained by replacing the word ``closed'' by the word ``open'' at the appropriate places.

The construction of the universal local homology theories is completely analogous to the construction of the universal coarse homology theory carried out in \cite[Sec.~3 \& 4]{buen}.  We keep the present section as short as possible and refer to \cite{buen}   for more background and references to the $\infty$-category literature.

\begin{rem}
In order to fix set-theoretic size issues we fix a sequence of three Grothendieck universes whose elements will be called very small sets, small sets, and large sets, respectively.  The  geometric objects in $\BC$, $\TopBorn$ or $\UBC$, and the indexing families of big familes, etc.\ are assumed to belong to the very small universe. The three categories themselves are small. 

For a small $\infty$-category $\bD$ we use the standard notation
\[
\PSh(\bD) \coloneqq \Fun(\bD^{op},\Spc)
\]
for the presentable $\infty$-category {of} space-valued presheaves, where $\Spc$ is the large category of small spaces. The $\infty$-category  $\PSh(\bD)$ is large.  If $\bD$ is an ordinary category, then we consider it as an $\infty$-category using the nerve.  
 
If not said differently, completeness or cocompleteness requires the existence of all limits or colimits indexed by very small index categories, respectively. 
\hB
\end{rem} 



Let $E$ be in $ \PSh (\UBC)$.
\begin{ddd}
$E$ satisfies closed descent if $E(\emptyset)\simeq *$ and for any  \ubs $X$ and uniformly  and coarsely excisive closed decomposition $(A,B)$  of $X$ 
the square 
  $$\xymatrix{E( X)\ar[r]\ar[d]&E(A)\ar[d]\\E(B)\ar[r]&E(A\cap B)}$$
  is cartesian. 
\end{ddd}

 \begin{ddd}\label{qeroigjqegegwegregw}
We let $\Sh (\UBC)$ be the full subcategory of $\PSh (\UBC) $ of presheaves which satisfy closed descent. Its objects will be called sheaves.
\end{ddd}

\begin{lem}
We have a 
 localization
$$L:\PSh (\UBC)\leftrightarrows \Sh (\UBC): \mathit{inclusion}\, .$$
\end{lem}
\begin{proof}
We use the theory developed in  \cite[5.5.4]{htt}. We let $Y\colon \UBC\to \PSh (\UBC)$ denote the Yoneda embedding.
 For every $X$ in $\UBC$ and  uniformly  and coarsely excisive closed decomposition $(A,B)$
of $X$ we consider the morphism
$$Y(A)\sqcup_{Y(A \cap B)}Y(B)\to Y(X)$$ in $\PSh(\UBC)$. Furthermore we consider $\emptyset\to Y(\emptyset)$.
Then by definition  $\Sh (\UBC)$ is the full subcategory of  $\PSh(\UBC)$ of objects which are local with respect to the small set of all such maps. 
The existence of the localization now follows from \cite[5.5.4.15]{htt}.
\end{proof}
 

\begin{lem}\label{fweioewfewfewfewf}
For every   \ubs $X$ the presheaf $Y(X)$ satisfies descent.
\end{lem}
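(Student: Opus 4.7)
The plan is to unwind the descent condition for the representable presheaf $Y(X)$. Since $Y(X)$ takes values in discrete spaces (sets of morphisms), the descent square being cartesian reduces to showing that for every uniformly and coarsely excisive closed decomposition $(A,B)$ of a uniform bornological coarse space $Z$, the natural map
$$\Mor_{\UBC}(Z,X)\longrightarrow \Mor_{\UBC}(A,X)\times_{\Mor_{\UBC}(A\cap B,X)}\Mor_{\UBC}(B,X)$$
is a bijection of sets. Since $Z=A\cup B$, any set-function $f:Z\to X$ is uniquely determined by its restrictions to $A$ and $B$, and conversely any compatible pair of set-functions glues. So the actual content is to check that when $f|_A$ and $f|_B$ are morphisms in $\UBC$, the glued set-function $f$ is simultaneously proper, controlled, and uniformly continuous.

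Properness and controlledness are the easier ones and I would dispatch them first. Properness holds because the preimage of a bounded subset of $X$ under $f$ is the union of its preimages under $f|_A$ and $f|_B$, both bounded by assumption. For controlledness, given a coarse entourage $U$ of $Z$, the pairs in $U\cap(A\times A)$ and $U\cap(B\times B)$ are sent into coarse entourages of $X$ by controlledness of the restrictions. A mixed pair $z\in A$, $z'\in B$ with $(z,z')\in U$ satisfies $z'\in U[A]\cap U[B]$, so coarse excisiveness of $(A,B)$ produces a coarse entourage $W$ of $Z$ depending only on $U$ and a point $y\in A\cap B$ with $(y,z')\in W$. Then $(z,y)\in(U\circ W^{-1})\cap(A\times A)$, so routing through $f(y)$ places $(f(z),f(z'))$ in the composition of two coarse entourages of $X$ each depending only on $U$.

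The main obstacle is uniform continuity, where the $\kappa$-function of Definition~\ref{ifpopwefwfs} must be used carefully. Given a uniform entourage $V$ of $X$, the plan is to pick $V'$ uniform with $V'\circ V'\subseteq V$, then use uniform continuity of $f|_A$ and $f|_B$ to extract uniform entourages $V'_A,V'_B$ of $Z$ (by extending uniform entourages of the respective subspaces) whose traces are sent into $V'$. Let $U_0$ and $\kappa$ implement uniform excisiveness of $(A,B)$. By $\cU$-admissibility of $\kappa$, one can choose a uniform entourage $W_0$ of $Z$ with $W_0\subseteq V'_A\cap V'_B\cap U_0$ and $\kappa(W_0)\subseteq V'_A\cap V'_B$. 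Pairs $(z,z')\in W_0\cap(A\times A)$ or $W_0\cap(B\times B)$ then have image in $V'\subseteq V$ directly, while for a mixed pair the excisive property yields $y\in A\cap B$ with $(y,z')\in\kappa(W_0)$ and $(z,y)\in W_0\circ\kappa(W_0)^{-1}$, so routing through $f(y)$ lands $(f(z),f(z'))$ in $V'\circ V'\subseteq V$. Keeping all entourage sizes synchronized so the composition fits twice into $V'$ is the only genuine technical point.
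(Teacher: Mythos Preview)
Your approach is correct and is the standard one—indeed it is what the paper's reference to \cite[Lem.~3.12]{buen} points at (that lemma treats the bornological coarse case; you have correctly identified that the only new ingredient here is uniform continuity, handled via the function $\kappa$ of Definition~\ref{ifpopwefwfs}).

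One small point to tighten in your uniform-continuity paragraph: after routing through $y\in A\cap B$ with $(y,z')\in\kappa(W_0)$, the pair $(z,y)$ lies in $(W_0\circ\kappa(W_0)^{-1})\cap(A\times A)$, and you need $f|_A$ to send \emph{this} into $V'$. Since you only arranged that $f|_A$ controls $V'_A\cap(A\times A)$, and $W_0\circ\kappa(W_0)^{-1}\subseteq (V'_A)^2$ under your inclusions (taking everything symmetric), you should choose $V'_A$ at the outset so that already $(V'_A)^2\cap(A\times A)$ maps into $V'$. This is available: first pick $V''_A$ with $(f|_A\times f|_A)(V''_A\cap(A\times A))\subseteq V'$, then take $V'_A$ with $(V'_A)^2\subseteq V''_A$ using the square-root axiom for uniformities. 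With that adjustment the synchronization you flag goes through and the argument is complete. (No analogous issue arises in the controlledness step because coarse structures are closed under composition, so $(U\circ W^{-1})\cap(A\times A)$ is directly a coarse entourage of $A$.)
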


\begin{proof}
See \cite[Lem.~3.12]{buen} for a similar argument.
\end{proof}

Let $E$ be in $ \Sh (\UBC)$.

\begin{ddd}
$E$ is homotopy invariant if for any \ubs  $X$ 
the morphism $E( X)\to  E([0,1]\otimes X)$
induced by 
the projection  is  an equivalence.
  \end{ddd} 

We let $\Sh^{h}(\UBC)$ denote the full subcategory of $\Sh (\UBC)$  of homotopy invariant sheaves.

\begin{lem}
We have a localization 
$$\cH:\Sh (\UBC)\leftrightarrows \Sh^{h}(\UBC):\mathit{inclusion}\, .$$
\end{lem}
\begin{proof}
For every $X$ in $\UBC$ we consider the map $Y([0,1]\otimes X)\to Y(X)$ in $\Sh(\UBC)$ (using Lemma \ref{fweioewfewfewfewf}).
Then  $ \Sh^{h}(\UBC)$ is the full subcategory of $\Sh (\UBC)$ of all objects which are local with respect to the small set of all such maps.   The existence of the localization now follows from \cite[5.5.4.15]{htt}.
\end{proof}

We call $\cH$ the homotopification.
 
Let $E$ be in $ \Sh^{h}(\UBC)$ and let  $*^{h}$ denote a  final object  of $\Sh^{h}(\UBC)$.
\begin{ddd}
We say that $E$ vanishes on flasques if for every flasque  \ubs $X$   we have an equivalence 
$E(X)\simeq *^{h}$.
\end{ddd}
 We let  $\Sh^{h,\mathit{fl}}(\UBC)$ denote the full subcategory of $\Sh^{h} (\UBC)$ of homotopy invariant sheaves which vanish on flasques.

\begin{lem}
We have a localization
$$\Fl:\Sh^{h} (\UBC)\leftrightarrows \Sh^{h,\mathit{fl}}(\UBC):\mathit{inclusion}\, .$$
\end{lem}

\begin{proof}
Let  
 $\emptyset^{h}$  denote an initial object of  $\Sh^{h}(\UBC)$.
For every  flasque $X$ in $\UBC$ we consider the morphism
$\emptyset^{h}\to \cH(Y(X))$ in $\Sh^{h}(\UBC)$. 
  Then by definition $\Sh^{h,\mathit{fl}}(\UBC)$ is the full subcategory of
 $ \Sh^{h} (\UBC)$ of objects which are local with respect to the small set of all such maps.
 The existence of the localization now follows from \cite[5.5.4.15]{htt}.
\end{proof}

For a \ubs  $X$ let $\tilde \cC$ denote the subset of  coarse
entourages  which are also uniform. Note that this subset is cofinal in $\cC$.

Let $E$ be  in {$\Sh^{h,\mathit{fl}}(\UBC)$.} 
\begin{ddd}
We say that $E$ is $u$-continuous if for every uniform bornological coarse space $X$    the natural morphism
$E(X)\to \lim_{U\in \tilde \cC} E(X_{U})$ is an equivalence.
\end{ddd}

 We let $\Sh^{h,\mathit{fl},u}(\UBC)$ denote the full subcategory of $\Sh^{h,fl} (\UBC)$ of homotopy invariant sheaves which vanish on flasques and are $u$-continuous.  

\begin{lem}
We have an adjunction
$$U:\Sh^{h,\mathit{fl}} (\UBC)\leftrightarrows \Sh^{h,\mathit{fl},u}(\UBC):\mathit{inclusion}\, .$$
\end{lem}

\begin{proof}
For every $X$ in $\UBC$ we consider the morphism
$$\colim_{U\in \tilde \cC} \Fl(\cH(Y(X_{U})))\to  \Fl(\cH(Y(X))$$ in
$\Sh^{h,\mathit{fl}} (\UBC)$.
Then by definition $\Sh^{h,\mathit{fl},u}(\UBC)$ is the full subcategory of objects which are local with respect to the small set of all maps as above.  The existence of the localization now follows from \cite[5.5.4.15]{htt}. 
\end{proof}

\begin{kor}
The $\infty$-categories from above, i.e., $\Sh(\UBC)$,  $\Sh^{h }(\UBC)$, $\Sh^{h,\mathit{fl}}(\UBC)$ and $\Sh^{h,\mathit{fl},u}(\UBC)$ are all presentable.
\end{kor}


\begin{ddd}
We call $\Sh^{h,\mathit{fl},u}(\UBC)$  the category of closed motivic  uniform bornological coarse spaces and  use the notation $\Spc\cB$.
\end{ddd}
   

%
%
%

The following process of stabilization  is analogous to the one described in \cite[Sec.~4.1]{buen}.
\begin{ddd}
We define the stable $\infty$-category of closed motivic uniform bornological coarse spectra
$\Sp\cB$     as the stabilization of $\Spc\cB$    in the world of the  presentable stable $\infty$-categories and left-adjoint functors.
\end{ddd}

We have a canonical functor
$$\Sigma^{\infty}_{+}\colon \Spc\cB\to \Sp\cB\ .$$
By construction the category $\Sp\cB$ is a presentable stable $\infty$-category.

We have a functor
$$\Yo\cB^{s} \coloneqq \Sigma^{\infty}_{+}\circ U\circ \Fl\circ \cH\circ L\circ Y \colon \UBC \to \Sp\cB\, .$$ In view of Lemma \ref{fweioewfewfewfewf} we could omit $L$ in this composition.
For a \ubs  $X$  we call 
$\Yo\cB^{s}(X)$ the   closed motive of $X$.

By construction the functor
$\Yo\cB^{s}$ is a $\Sp\cB$-valued closed local homology theory.
It is in fact  the universal closed local homology theory.

\begin{rem}
In order to distinguish the open case (constructed in an analogous manner using open decompositions in Definition \ref{qeroigjqegegwegregw}) from the closed case we will use the notation
$$\Yo\cB^{s,o} \colon \UBC\to \Sp\cB^{o}$$
for the corresponding functor in the open case.
\hB
\end{rem}

\begin{kor}\label{giugoeggergg}
If $\bC$ is a   stable, small cocomplete $\infty$-category, then
precomposition with $\Yo\cB^{s}$ induces an equivalence between the $\infty$-categories of $\bC$-valued closed local homology theories and small colimit preserving functors $\Sp\cB\to \bC$.
\end{kor}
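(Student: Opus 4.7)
The plan is to chain together the universal properties of the four reflective localizations that define $\Spc\cB = \Sh^{h,fl,u}(\UBC)$, followed by the universal property of stabilization into a presentable stable $\infty$-category. This is strictly parallel to the argument carried out in \cite[Sec.~4.1]{buen} for coarse motivic spectra, so the main task is to check that each localization step corresponds exactly to one of the four axioms in Definition \ref{ioojwfefewfwfw}.

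First, I would invoke the universal property of the Yoneda embedding $Y\colon \UBC \to \PSh(\UBC)$: since $\bC$ is cocomplete, precomposition with $Y$ gives an equivalence between $\Fun^{\colim}(\PSh(\UBC), \bC)$ and $\Fun(\UBC, \bC)$. Each of the subsequent localizations $L$, $\cH$, $\Fl$, $U$ is (as noted in the proofs of the respective adjunctions) a locality condition with respect to an explicit set $S_L$, $S_{\cH}$, $S_{\Fl}$, $S_U$ of maps in $\PSh(\UBC)$. The universal property of reflective localizations then identifies colimit-preserving functors out of the localized category with colimit-preserving functors out of $\PSh(\UBC)$ sending the corresponding set of maps to equivalences. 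Unwinding the definitions, these four conditions on $E := F \circ Y$ correspond, respectively, to $E$ being excisive, homotopy invariant, vanishing on flasques, and $u$-continuous. The excision translation is the only nontrivial step: descent in $\Sh(\UBC)$ is formulated as a Cartesian square in $\Spc$, but a colimit-preserving functor into the stable $\infty$-category $\bC$ sends the generating descent maps to equivalences precisely when it sends uniformly and coarsely excisive closed decomposition squares to coCartesian squares in $\bC$, using that in a stable $\infty$-category a square is coCartesian iff it is Cartesian. The condition $E(\emptyset) \simeq 0$ comes from the descent condition applied to the empty decomposition.

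Second, I would invoke the universal property of stabilization in the world of presentable $\infty$-categories: since $\bC$ is stable and cocomplete (hence automatically presentable to the extent we need, or at least admitting the relevant extensions), precomposition with $\Sigma^{\infty}_+\colon \Spc\cB \to \Sp\cB$ identifies $\Fun^{\colim}(\Sp\cB, \bC)$ with $\Fun^{\colim}(\Spc\cB, \bC)$. Composing this with the previous chain of equivalences yields
\[
\Fun^{\colim}(\Sp\cB, \bC) \simeq \{E \in \Fun(\UBC, \bC) : E \text{ is a local homology theory}\},
\]
with the equivalence implemented by precomposition with $\Yo\cB^{s} = \Sigma^{\infty}_+ \circ U \circ \Fl \circ \cH \circ L \circ Y$.

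The main obstacle, as in the coarse case of \cite[Sec.~4.1]{buen}, is bookkeeping: one must verify that the generating sets $S_L$, $S_{\cH}$, $S_{\Fl}$, $S_U$ are actually sets (not proper classes), so that the localizations exist and the universal property of inversion in presentable $\infty$-categories applies. This is handled by fixing a small full subcategory of $\UBC$ closed under the relevant constructions, as already flagged in the set-theoretic remark preceding the construction, and by observing that excisive decompositions, homotopy projections, flasques, and $u$-continuity maps can each be parameterized by such a small subcategory. Once this is in place, no further argument beyond the chain above is required, because each localization contributes its axiom cleanly and independently.
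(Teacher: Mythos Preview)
Your proposal is correct and follows exactly the approach the paper intends: the paper gives no proof of this corollary at all, treating it as an immediate consequence of the chain of localizations and stabilization just constructed, and explicitly refers the reader to the completely analogous argument in \cite[Sec.~3 \& 4]{buen} for details. You have simply unpacked that reference, matching each localization step to one axiom of Definition~\ref{ioojwfefewfwfw} and then invoking the universal property of stabilization; this is precisely what the cited argument in \cite{buen} does in the coarse case.
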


\begin{rem}
If $\bC$ is presentable, then the assertion of Corollary \ref{giugoeggergg} is immediate from the universal properties of the localizations leading to $\Sh^{h,\mathit{fl},u}(\UBC)$ and the process of stabilization. For the general case see  \cite[Lem.~4.4]{buen} and the  discussion  
before.
\hB
\end{rem}

For a closed local homology theory $E$ we use the notation $E$ also to denote the corresponding colimit preserving functor $\Sp\cB\to \cC$.

%
%


\begin{rem}
The existence of non-trivial closed local homology theories (see Lemma~\ref{fiowefewfewfwe} and Lemma~\ref{fiweofwefewfewfewf}) shows that the category $\Sp\cB$ is non-trivial.
\hB
\end{rem}

\section{The universal coarsification functor \texorpdfstring{$\bP$}{P}}
 
 In this section we  extend the construction given in Example \ref{efwoiuweofwefewfewfw} 
to a coarse homology theory $\bP$ called the universal coarsification.

 Let $\BC^{\cC}$ be the category of pairs $(X,U)$ of bornological coarse spaces $X$ and coarse entourages $U$ of $X$. A    morphisms $f\colon (X,U)\to  (X^{\prime},U^{\prime})$ is a  morphism $f \colon X\to X^{\prime}$ of bornological coarse spaces such that $(f\times f)(U)\subseteq U^{\prime}$. By Example \ref{efwoiuweofwefewfewfw}  we have a functor
$$P\colon \BC^{\cC}\to \UBC$$ which sends $(X,U)$ to the \ubs  $P(X,U)$
associated to the simplical complex $P_{U}(X)$ and the family $A$ of sub-complexes $P_{U }(B)$ for all bounded subsets $B$ of $X$. 
We  furthermore have a forgetful functor
\begin{equation}
\label{eqjk3re}
F_{\cC}\colon \BC^{\cC}\to \BC
\end{equation}
which sends the pair $(X,U)$ to $X$.
 
\begin{ddd}\label{ergioerggrg546546}
We define the universal coarsification functor $\bP\colon \BC\to \Sp\cB$   as the left Kan-extension
$$\xymatrix{\BC^{\cC}\ar[rr]^-{\Yo\cB^{s}\circ P}\ar[d]_-{F_{\cC}} & & \Sp\cB\,.\\
\BC\ar[urr]_-{\bP} & &}$$
\end{ddd}


By the following proposition we can interpret $\bP$ also as a   colimit preserving functor
 $$\bP\colon \Sp\cX\to \Sp\cB\, .$$
\begin{prop}\label{foiwfewfewewfewwe}
The universal coarsification functor $\bP$ is a $\Sp\cB$-valued coarse homology theory.
\end{prop}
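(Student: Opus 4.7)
The plan is first to unwind the left Kan extension to get the explicit filtered colimit formula
\[
\bP(X)\simeq\colim_{U\in\cC(X)} \Yo\cB^{s}(P_{U}(X)),
\]
which holds because the fiber of the forgetful functor $F_{\cC}:\BC^{\cC}\to\BC$ over $X$ is the filtered poset of coarse entourages of $X$, and the functor $U\mapsto P_{U}(X)$ is manifestly functorial on this poset. Once this formula is available, each of the four defining properties of a coarse homology theory will be checked by exploiting the corresponding property of the universal local homology theory $\Yo\cB^{s}$ together with the fact that filtered colimits in the presentable stable $\infty$-category $\Sp\cB$ are exact.

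The easy properties come first. The $u$-continuity of $\bP$ is essentially formal: the poset $\cC(X)$ is the filtered union $\bigcup_{V}\cC(X_{V})$, and since filtered colimits commute with filtered colimits, the canonical map $\colim_{V}\bP(X_{V})\to\bP(X)$ is an equivalence. For coarse invariance, given close morphisms $f_{0},f_{1}:X\to X'$ in $\BC$, the closeness entourage $V$ can be absorbed into an enlargement $U'\supseteq U\cup V$, and one constructs a homotopy $h:[0,1]\otimes P_{U}(X)\to P_{U'}(X')$ in $\UBC$ by the straight-line interpolation of probability measures; this is a morphism of uniform bornological coarse spaces thanks to the spherical metric. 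Then homotopy invariance of $\Yo\cB^{s}$ gives $\bP(f_{0})\simeq\bP(f_{1})$. Vanishing on flasques follows in the same spirit: if $f:X\to X$ implements flasqueness, then the induced self-equivalence of $\bP(X)$ (by coarse invariance, since $f$ is close to $\id_{X}$), combined with the natural telescope built from iterates of $f$ inside the filtered colimit, produces an Eilenberg swindle forcing $\bP(X)\simeq 0$.

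The main obstacle is coarse excision. Given a complementary pair $(Z,\cY)$ with $Y\in\cY$ such that $Z\cup Y=X$, I would show that for every sufficiently large coarse entourage $U$, the closed sub-complexes $P_{U}(Z)$ and $P_{U}(Y)$ form a decomposition of their union $P_{U}(Z)\cup P_{U}(Y)\subseteq P_{U}(X)$ that is both uniformly and coarsely excisive in the sense of Definition~\ref{ifpopwefwfs}. The delicate step is to produce the control function $\kappa$: in the uniform direction it follows from the convex geometry of simplices under the spherical metric, while in the coarse direction it uses that $(Z,Y)$ is coarsely excisive in $X$ and that distances in $P_{U}(X)$ are controlled by the combinatorics of $U$. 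One must also verify that, after passing to the colimit over $U$, the inclusion $P_{U}(Z)\cup P_{U}(Y)\hookrightarrow P_{U}(X)$ becomes cofinal, so that the limit decomposition really computes $\bP(X)$. Once these geometric facts are in place, excision for $\Yo\cB^{s}$ yields a cocartesian square at each scale $U$, and exactness of the filtered colimit in the stable $\infty$-category $\Sp\cB$ produces the required cocartesian square for $\bP$ on the complementary pair, using $\bP(\cY)\simeq\colim_{Y\in\cY}\bP(Y)$ by definition of the value on big families.
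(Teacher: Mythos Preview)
Your treatment of $u$-continuity and coarse invariance matches the paper's. The gaps are in excision and flasqueness.

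For excision, fixing a single $Y\in\cY$ does not work: a simplex of $P_{U}(X)$ with one vertex in $Z\setminus Y$ and another in $Y\setminus Z$ lies in neither $P_{U}(Z)$ nor $P_{U}(Y)$, and enlarging $U$ only enlarges this defect, so no ``cofinality over $U$'' can repair the inclusion $P_{U}(Z)\cup P_{U}(Y)\hookrightarrow P_{U}(X)$. The paper instead lets the member of $\cY$ vary with $U$: choose $Y_{i}$ with $Z\cup Y_{i}=X$, and for each $U$ pick $Y_{j}\in\cY$ with $U[Y_{i}]\subseteq Y_{j}$. Then any $U$-bounded subset meeting $X\setminus Z\subseteq Y_{i}$ lies in $Y_{j}$, hence $P_{U}(Z)\cup P_{U}(Y_{j})=P_{U}(X)$. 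Since $P_{U}(X)$ is a path quasi-metric space, every closed decomposition of it is automatically uniformly and coarsely excisive (Example~\ref{wfiofwefewfwefewf}), so no hand-built $\kappa$ is needed; one then takes the colimit over $j$ and over $U$.

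For flasqueness, the swindle sketch is not an argument: the equivalence $\bP(f)\simeq\id$ by itself does not yield a telescope, since $f$ need not preserve any individual $P_{U}(X)$. The paper's fix is concrete: if $\id$ and $f$ are $U$-close, then $V:=\bigcup_{n}(f^{n}\times f^{n})(U)$ is a coarse entourage satisfying $(f\times f)(V)\subseteq V$, so $P(f)$ is a self-map of $P(X,V)$ and one checks that it implements flasqueness of $P(X,V)$ in $\UBC$ (in particular, $P(f)$ is homotopic to the identity by linear interpolation). Such $V$ are cofinal, hence $\bP(X)\simeq 0$.
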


\begin{proof}
We verify the axioms for a coarse homology theory \cite[Def.~4.22]{buen}.
If $X$ is a bornological coarse space with coarse structure $\cC$, then by the point-wise formula for the left Kan extension
\begin{equation}\label{oirjeorgergregr}
\bP(X)\simeq \colim_{U\in \cC} \Yo\cB^{s}(P(X,U))\, .
\end{equation} 
We have equivalences
\begin{eqnarray*}
\colim_{V\in \cC} \bP(X_{V})&\simeq&
\colim_{V\in \cC} \colim_{U\in \cC\langle V\rangle} \Yo\cB^{s}(P(X,U))   \\&\simeq&
\colim_{U\in \cC} \Yo\cB^{s}(P(X,U))\\&\simeq&\bP(X)\, ,
\end{eqnarray*}
where for the second equivalence we use  a cofinality consideration. Hence 
 $\bP$ is $u$-continuous.
 
 Consider two morphisms $f_{0},f_{1}\colon (X,U)\to (X^{\prime},U^{\prime})$ in $\BC^{\cC}$.
 If $f_{0}$ and $f_{1}$ are $U^{\prime}$-close to each other, i.e., $(f_{0},f_{1})(\diag(X))\subseteq U'$, then
 $P(f_{0})$ and $P(f_{1})$ are homotopic and $\Yo\cB^{s}(P(f_{0}))\simeq \Yo\cB^{s}(P(f_{1}))$ by the homotopy invariance of $\Yo\cB^{s}$. This implies that $\bP$ is coarsely invariant.
 
Note that $\bP(\emptyset)\simeq \Yo\cB^{s}(\emptyset)\simeq 0$.
  Let $(X,U)$ be an object of $ \BC^{\cC}$ such that $U$ contains the diagonal of $X$. For a subset $Y$ of $X$ note that $P_{U }(Y)$ is a closed subset of $P_{U}(X)$.
 If $(Z,\cY)$ with $\cY=(Y_{i})_{i\in I}$ is a complementary pair, then for every  $i,j$ in $I$ such that $Z\cup Y_{i}=X$
 and {$U[Y_{i}]\subseteq Y_{j}$} the pair
 $(P_{U}(Z),P_{U}(Y_{j}))$ is a closed decomposition of the path quasi-metric space $P_{U}(X)$ and hence uniformly  and coarsely excisive (see Example \ref{wfiofwefewfwefewf}).  For sufficiently large $j$ in $I$ and since  $\Yo\cB^{s}$ is excisive in the sense of Definition \ref{reffoiweffewff} for closed uniformly  and coarsely excisive decompositions we get a cocartesian square
 $$\xymatrix{\Yo\cB^{s} (P_{U}(Z)\cap P_{U}(Y_{j})) \ar[r]\ar[d]&\Yo\cB^{s}(P_{U}(Z))\ar[d]\\\Yo\cB^{s}(P_{U}(Y_{j}))\ar[r]&\Yo\cB^{s}(P_{U}(X))}$$
  We  form the colimit over $j$ in $I$  and over $U$ in the coarse structure of $X$. 
  The lower right corner yields
  $\bP(X)$. For the lower left corner we first take the $U$-colimit and then the $j$-colimit. Then we obtain the object
  $\bP(\cY)$. In the upper right corner we get $\bP(Z)$.
  For the upper left corner we note that
  $P_{U}(Z)\cap P_{U}(Y_{j})=P_{U}(Z\cap Y_{j})$ and finally  get
  $\bP(Z\cap \cY)$. Since we have exhibited the square
  $$\xymatrix{\bP( Z \cap \cY) \ar[r]\ar[d]& \bP(Z)\ar[d] \\ \bP(\cY) \ar[r]&\bP (X)}$$
  as a colimit of  cocartesian squares it is cocartesian itself.
    In view of \cite[Rem.~4.23]{buen}
we can conclude that the functor $\bP$ satisfies excision. 

Finally, assume that a bornological coarse space $X$ is flasque with flasqueness implemented by  $f \colon X\to X$.
Let $U$ be an entourage of $X$ such that $\id_{X}$ and $f$ are $U$-close to each other. Then $V \coloneqq \bigcup_{n\in \nat} (f^{n}\times f^{n})(U)$ is again an entourage of $X$ which contains $U$.  Now note that $(f\times f)(V)\subseteq V$. Therefore $P(f):P(X,V)\to P(X,V)$ is defined.  This map implements flasqueness of $P(X,V)$, hence $\Yo\cB^{s}(P(X,V))\simeq 0$.
 In view of \eqref{oirjeorgergregr} by cofinality we see that $\bP$ vanishes on flasques.
\end{proof}

If $E$ be a closed $\bC$-valued local homology theory, then we can  define the  coarse homology theory
$$E\bP \colon \BC\stackrel{\bP}{\to}  \Sp\cB\stackrel{E}{\to} \bC\, .$$
\begin{ddd}\label{oijoiergerg}
The coarse homology theory $E\bP $ will be called the coarsification of $E$.
\end{ddd}

\begin{ex}
  In Example \ref{fiweofwefewfewfewf} we have seen that for any closed locally finite homology theory
  $E\colon \BT\to \bC$ we {get} a closed local homology theory  $E\circ F_{\cC,\cU/2}\colon \UBC\to \bC$.
  The coarsification
  $(E\circ F_{\cC,\cU/2})\bP$ is equivalent to the coarse homology theory $QE$, which is the coarsification of $E$ from \cite[Def.~7.44]{buen}.
\hB
\end{ex}
   
\begin{rem}
Proposition \ref{foiwfewfewewfewwe} is also true (with a slightly different argument for excision using open tubular neighbourhoods of the subsets and in addition homotopy invariance) if we work with  $\Sp\cB^{o}$ instead of $\Sp\cB$.
\hB
\end{rem}

Using the open version we can define the coarsification 
$$E\bP^{o} \colon \BC\stackrel{\bP^{o}}{\to}  \Sp\cB^{o}\stackrel{E}{\to} \bC\, .$$
for open local homology theories $E$.

\begin{lem}\label{vweoijiorjovbwvdfsv}
If $E\colon \UBC\to \bC$ is a closed and open local homology theory, then we have an equivalence
$E\bP^{o}\simeq E\bP$.
\end{lem}

\begin{proof}
Let $X$ be in $\BC$ with coarse structure $\cC$. We can view  $E$ as a colimit preserving functor $\Sp\cB\to \bC$ and {also}
$\Sp\cB^{o}\to \bC$. The chain
\begin{eqnarray*}
E\bP^{o}(X)&\simeq&E(\colim_{U\in \cC} \Yo\cB^{s,o}(P_{U}(X)))\\&
 \simeq &\colim_{U\in \cC} E(P_{U}(X))\\&\simeq&
 E(\colim_{U\in \cC} \Yo\cB^{s}(P_{U}(X)))\\&\simeq&
 E\bP(X)
\end{eqnarray*}
gives now the desired equivalence.
\end{proof}

\section{From coarse to local homology theories via \texorpdfstring{$\bF$}{F}}

In this section we refine the forgetful functor
$F_{\cU} \colon \UBC\to \BC$ from \eqref{eqkj4zrtet} to a local homology theory. We define
\begin{equation}\label{wergergergergergweferferfw}
\bF \coloneqq \Yo^{s}\circ F_{\cU}\colon \UBC\to \Sp\cX \, .
\end{equation}

\begin{lem}\label{goigorgregregregre}
$\bF$ is a closed and open local homology theory.
\end{lem}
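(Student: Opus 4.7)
The statement is an immediate consequence of Lemma \ref{fiowefewfewfwe}. Recall that the universal coarse homology theory $\Yo^s : \BC \to \Sp\cX$ is, by its very construction, a $\Sp\cX$-valued coarse homology theory. Applying Lemma \ref{fiowefewfewfwe} with $E := \Yo^s$ gives that $\bF = \Yo^s \circ F_{\cU}$ is a $\Sp\cX$-valued local homology theory, which is precisely what we want.

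For the reader's convenience, let me sketch how each of the four axioms of Definition \ref{ioojwfefewfwfw} is inherited. Homotopy invariance for $\bF$ follows from coarse invariance of $\Yo^s$ together with the fact that the projection $[0,1] \otimes F_{\cU}(X) \to F_{\cU}(X)$ is a coarse equivalence in $\BC$, since $[0,1]$ carries the maximal (hence bounded) coarse structure. Vanishing on flasques is direct, because a morphism $f : X \to X$ implementing flasqueness in $\UBC$ is in particular a flasqueness-implementing morphism for the underlying bornological coarse space $F_{\cU}(X)$. The $u$-continuity axiom transports because $F_{\cU}$ acts as the identity on the underlying set and on the coarse structure, so $F_{\cU}(X_V) = F_{\cU}(X)_V$; after passing to the cofinal subset $\tilde\cC$ of entourages that are both coarse and uniform, we may invoke $u$-continuity of $\Yo^s$ directly.

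The only step that involves substantive input, and which I regard as the main obstacle, is excision. The subtlety is that excision for local homology theories (Definition \ref{reffoiweffewff}) is formulated in terms of uniformly and coarsely excisive closed decompositions, whereas excision for coarse homology theories in $\BC$ is phrased via complementary pairs. The translation is handled by \cite[Lem.~3.38]{buen}, which guarantees that any coarse homology theory yields a cocartesian square for every coarsely excisive decomposition. Since $F_{\cU}$ sends uniformly and coarsely excisive closed decompositions to coarsely excisive ones, excision for $\bF$ follows, completing the verification.
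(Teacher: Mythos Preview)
Your proof is correct and matches the paper's own approach: the paper simply states that the proof is ``straightforward and similar to the one of Lemma~\ref{fiowefewfewfwe}.'' Your observation that one can in fact apply Lemma~\ref{fiowefewfewfwe} directly with $E := \Yo^{s}$ (rather than re-run its argument) is a valid shortcut, and your unpacking of the four axioms is accurate extra detail that the paper omits.
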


\begin{proof} 
The proof is straightforward and similar to the one of Lemma~\ref{fiowefewfewfwe}.
\end{proof}

We get a colimit-preserving functor 
\begin{equation}\label{ewfewfwffefwefe}
\bF\colon \Sp\cB\to \Sp\cX\,.
\end{equation}
 
 For a $\bC$-valued  coarse homology theory
 $E $ we write
 $$E\bF \colon\Sp\cB\xrightarrow{\bF} \Sp\cX \xrightarrow{E} \bC$$
 for the associated local homology theory (compare with Lemma~\ref{fiowefewfewfwe} where the notation $E\circ F_{\cU}$ was used).
 
\begin{prop}\label{frioorfwefewfefwefew}
We have a canonical equivalence
\begin{equation}
\label{eq345ztrfd}
\id \xrightarrow{\simeq} \bF\circ \bP\, .
\end{equation}
\end{prop}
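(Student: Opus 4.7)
The plan is to check the equivalence on the generating objects and then invoke colimit preservation. Both $\bP:\Sp\cX\to \Sp\cB$ (Proposition~\ref{foiwfewfewewfewwe}) and $\bF:\Sp\cB\to \Sp\cX$ (Lemma~\ref{goigorgregregregre} together with Corollary~\ref{giugoeggergg}) are colimit-preserving functors between presentable stable $\infty$-categories, so their composite is colimit-preserving. Since $\Sp\cX$ is generated under colimits by the representables $\Yo^{s}(X)$ for $X\in \BC$, it suffices to produce a natural equivalence $\Yo^{s}(X)\simeq \bF(\bP(\Yo^{s}(X)))$ that is natural in $X\in \BC$, and then extend by colimits.

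First I would unfold $\bP$ using the pointwise formula for the left Kan extension along $F_{\cC}$, exactly as in \eqref{oirjeorgergregr}, which gives
\[
\bP(X)\simeq \colim_{U\in \cC}\, \Yo^{s}\cB(P(X,U))\,,
\]
the colimit being taken over the filtered poset of coarse entourages of $X$. Applying $\bF$, which preserves colimits, and using that $\bF\circ \Yo^{s}\cB \simeq \Yo^{s}\circ F_{\cU}$ by the very definition $\bF:=\Yo^{s}\circ F_{\cU}$ together with the universal property of $\Yo^{s}\cB$, yields
\[
\bF(\bP(X))\simeq \colim_{U\in \cC}\, \Yo^{s}(F_{\cU}(P_{U}(X)))\,.
\]

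Next I would use the equivalence of bornological coarse spaces $X_{U}\xrightarrow{\simeq} F_{\cU}(P_{U}(X))$ recalled in the introduction (induced by the inclusion of vertices $X\hookrightarrow P_{U}(X)$). This is natural in the pair $(X,U)\in \BC^{\cC}$ and therefore identifies the above colimit with $\colim_{U\in \cC}\Yo^{s}(X_{U})$. Finally, the $u$-continuity of the universal coarse homology theory $\Yo^{s}$ gives
\[
\colim_{U\in \cC}\, \Yo^{s}(X_{U})\;\simeq\; \Yo^{s}(X)\,,
\]
and the composite chain is the desired natural equivalence. The transformation $\id\to \bF\circ \bP$ itself is obtained by assembling the canonical maps $\Yo^{s}(X)\to \Yo^{s}(X_U)$ in the opposite direction and inverting them under $u$-continuity; equivalently, it is the transformation adjoint to the unit of the Kan-extension adjunction that defines $\bP$.

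The main obstacle is not any single step (each ingredient is already in place in the preceding sections), but the bookkeeping of naturality in $X$: one must verify that the equivalences $X_{U}\simeq F_{\cU}(P_{U}(X))$ assemble into a natural transformation of functors $\BC^{\cC}\to \BC$, and that after passing to motives and taking the filtered colimit over $\cC$ the resulting map agrees with the canonical transformation arising from the universal property defining $\bP$. Once this naturality is in place, extending the equivalence along colimits from the representables to all of $\Sp\cX$ is automatic since both sides of \eqref{eq345ztrfd} preserve colimits.
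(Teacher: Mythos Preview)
Your proposal is correct and follows essentially the same approach as the paper: both use the equivalence $X_{U}\xrightarrow{\simeq} F_{\cU}(P(X,U))$ induced by Dirac measures, colimit preservation of $\bF$, and $u$-continuity of $\Yo^{s}$ to identify $\bF\circ\bP$ with the identity. The only difference is organizational: the paper packages the comparison as an equivalence of functors $\Yo^{s}\circ I \simeq \bF\circ \Yo\cB^{s}\circ P$ on $\BC^{\cC}$ (where $I(X,U):=X_{U}$) and then takes left Kan extensions along $F_{\cC}$, which handles the naturality bookkeeping you flagged as the main obstacle automatically; your version unfolds the pointwise colimit formula first and addresses naturality afterward.
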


\begin{proof}
We have a functor $$I \colon \BC^{\cC}\to \BC$$
that is defined on objects by $I(X,U)\coloneqq X_{U}$.
By $u$-continuity of $\Yo^{s}$ the left Kan extension of $\Yo^{s}\circ I$ along $F_{\cC}$ {(see \eqref{eqjk3re} for the definition of $F_{\cC}$)} is equivalent to $\Yo^{s}$.  
Let $(X,U)  $ be in $\BC^{\cC}$. Dirac measures provide a canonical inclusion
$X\to P_{U}(X)$ of sets. This map is an equivalence  
\begin{equation}\label{vrvheioveirvioerververvrev}
X_{U} \xrightarrow{\simeq} F_{\cU}(P(X,U))
\end{equation} 
 of bornological coarse spaces. 
 Hence we get an equivalence of functors from $\BC^{\cC}$ to $\Sp\cX$
 \begin{equation}\label{ggoiugo3ig3433g}
\Yo^{s}\circ I \xrightarrow{\simeq}   \Yo^{s}\circ F_{\cU}\circ P \simeq \bF\circ \Yo\cB^{s} \circ P \, .
\end{equation} 
Since $\bF$ is colimit preserving, the equivalence \eqref{ggoiugo3ig3433g} induces an equivalence of left Kan extensions along $F_{\cC}$:
$$\Yo^{s} \xrightarrow{\simeq} \bF\circ \bP\colon \BC\to \Sp\cX\, .$$
We finally interpret $\bP$ as a colimit preserving functor
$\Sp\cX\to \Sp\cB$ to get the desired equivalence.
\end{proof}

\begin{kor}
Every coarse homology theory $E$ is equivalent to the coarsification of the closed local homology theory $E\bF$. Similarly, every morphism between coarse homology theories is induced by coarsification from a morphism between the associated closed local homology theories.
\end{kor}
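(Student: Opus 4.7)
The plan is to deduce both assertions of the corollary directly from the equivalence $\id_{\Sp\cX}\xrightarrow{\simeq}\bF\circ\bP$ of Proposition~\ref{frioorfwefewfefwefew} by whiskering with functors valued in $\bC$.

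First, I identify a coarse homology theory $E$ with its colimit-preserving extension $E:\Sp\cX\to\bC$, and a local homology theory $F$ with its colimit-preserving extension $F:\Sp\cB\to\bC$, using Corollary~\ref{giugoeggergg} and its analog for coarse motives. With these identifications, $E\bF=E\circ\bF$ as functors $\Sp\cB\to\bC$, and this composition is colimit-preserving, hence really is a local homology theory. Its coarsification (Definition~\ref{oijoiergerg}) is $(E\bF)\bP=E\circ\bF\circ\bP$. Post-composing $E$ with the equivalence of \eqref{eq345ztrfd} yields a natural equivalence
\[
E\;\xrightarrow{\ \simeq\ }\;E\circ\bF\circ\bP=(E\bF)\bP,
\]
which proves the first claim.

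For the second claim, let $\varphi:E\to E'$ be a morphism of coarse homology theories, viewed as a natural transformation between colimit-preserving functors $\Sp\cX\to\bC$. Whiskering with $\bF$ produces $\varphi\bF:E\bF\to E'\bF$, a morphism between the associated local homology theories. Whiskering again with $\bP$ produces its coarsification $(\varphi\bF)\bP:(E\bF)\bP\to(E'\bF)\bP$. By naturality of the equivalence $\id_{\Sp\cX}\simeq\bF\bP$ (and hence of $E\simeq(E\bF)\bP$), the square
\[
\xymatrix{E\ar[r]^-{\varphi}\ar[d]_-{\simeq} & E'\ar[d]^-{\simeq}\\ (E\bF)\bP\ar[r]^-{(\varphi\bF)\bP} & (E'\bF)\bP}
\]
commutes, exhibiting $\varphi$ as induced, up to the canonical equivalences, from the coarsification of $\varphi\bF$.

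There is no substantial obstacle here: the whole content is packaged in Proposition~\ref{frioorfwefewfefwefew}. The only point requiring a little care is the matching of the two viewpoints on coarse (resp.\ local) homology theories as functors on $\UBC$/$\BC$ versus colimit-preserving functors on $\Sp\cB$/$\Sp\cX$; this matching is precisely the content of Corollary~\ref{giugoeggergg} and the analogous statement for coarse motives from \cite{buen}, so no independent argument is needed.
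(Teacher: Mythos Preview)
Your proof is correct and is precisely the argument the paper has in mind: the corollary is stated without proof immediately after Proposition~\ref{frioorfwefewfefwefew} because it follows by exactly the whiskering you describe. There is nothing more to say.
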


\section{Coarsifying spaces}\label{fewwpowepkfpewofewfeww}

Under certain finiteness conditions on the  uniform bornological coarse space $X$ we can construct a morphism
$$c_{X} \colon \Yo\cB^{s}(X)\to \bP(\bF(X))$$
called the comparison morphism. {We will furthermore show that it is an equivalence for simplicial complexes of bounded geometry which are uniformly contractible. Part of the material here is inspired by Roe \cite[Ch.~2, Part ``Coarse algebraic topology'']{roe_index_coarse}.}

Let $X$  be a coarse space with a uniform structure. 

\begin{ddd}\label{fewoifwefewfewf}
We say that  the uniform structure is   numerable if there is an 
entourage~$U$ which is both coarse and uniform,  and   an equicontinuous and  uniformly  point-wise  locally finite partition
of unity $(\chi_{\alpha})_{\alpha\in A}$ such that $\supp(\chi_{\alpha})$ is $U$-bounded for all $\alpha $ in $A$.
\end{ddd}

\begin{rem}
Here \emph{uniform  point-wise  local finiteness} means that 
$$\sup_{x\in X}|\{\alpha\in A\:|\: \chi_{\alpha}(x)\not=0\}| < \infty\, .$$

The condition  of  \emph{equicontinuity} requires  that for  every  positive real number $\varepsilon$   there exists a uniform entourage $V$ of $X$ such that for all $\alpha$ in $A$ and $(x,x^{\prime})$ in $V$ we have the inequality
$|\chi_{\alpha}(x)-\chi_{\alpha}(x^{\prime})|\le \varepsilon$.
\hB
\end{rem}

Let $X$ be a simplicial complex with the coarse and uniform structures both induced from the spherical path quasi metric. 
\begin{lem}\label{fijwjifofewfewfwf}
If $X$ is finite-dimensional, then $X$ is numerable.
\end{lem}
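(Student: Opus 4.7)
The plan is to construct the desired partition of unity using the barycentric coordinate functions. For each vertex $v\in X^{(0)}$, define $\chi_{v}:X\to[0,1]$ by letting $\chi_{v}(x)$ equal the $v$-th barycentric coordinate of $x$ in the unique open simplex containing it, and by setting $\chi_{v}\equiv 0$ outside the closed star of $v$. These functions are continuous on $X$, take values in $[0,1]$, and satisfy $\sum_{v}\chi_{v}\equiv 1$ point-wise.

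For the entourage, I would take $U:=\{(x,y)\in X\times X\mid d(x,y)\le 2D\}$, where $D$ is the (universal) diameter of a closed simplex in the spherical metric. Since the spherical path metric generates both the uniform and coarse structures on $X$, the set $U$ is simultaneously coarse and uniform. The support of $\chi_{v}$ is contained in the closed star of $v$, and any two points of that star can be joined via $v$ by a path of length at most $2D$, so $\supp(\chi_{v})$ is $U$-bounded. The uniform point-wise local finiteness condition is immediate: at any $x\in X$, only the vertices of the (unique) open simplex containing $x$ give $\chi_{v}(x)\neq 0$, hence at most $\dim(X)+1$ many.

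The main obstacle is equicontinuity, and it is here that finite-dimensionality enters. The key observation is that on a single $k$-simplex equipped with the spherical metric, every barycentric coordinate function is Lipschitz with some constant $L_{k}$ depending only on $k$, because all $k$-simplices with spherical metric are mutually isometric. Setting $L:=\max_{0\le k\le \dim(X)}L_{k}<\infty$, each $\chi_{v}$ is thus $L$-Lipschitz when restricted to any closed simplex of $X$. To promote this to a global Lipschitz bound on $(X,d)$, I would use the path metric characterisation: for any rectifiable curve $\gamma$ from $x$ to $y$ one covers the compact interval $[0,1]$ by finitely many subintervals whose image under $\gamma$ lies in a single closed simplex, summing the simplex-wise Lipschitz estimates to obtain $|\chi_{v}(x)-\chi_{v}(y)|\le L\cdot \mathrm{length}(\gamma)$; taking the infimum over paths yields $|\chi_{v}(x)-\chi_{v}(y)|\le L\,d(x,y)$ uniformly in $v$.

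Finally, given $\varepsilon>0$, the uniform entourage $V:=\{(x,y)\in X\times X\mid d(x,y)\le \varepsilon/L\}$ witnesses the equicontinuity condition of Definition~\ref{fewoifwefewfewf}, completing the verification that $(\chi_{v})_{v\in X^{(0)}}$ is the required partition of unity. The delicate point in the above is precisely the passage from the per-simplex Lipschitz estimate to the global one; the argument works because $L_{k}$ is bounded uniformly over $k\le\dim(X)$, which is where the hypothesis of finite-dimensionality is used in an essential way.
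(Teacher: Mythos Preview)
Your proof is correct and follows essentially the same approach as the paper: use the barycentric coordinate functions $(\chi_{v})_{v\in X^{(0)}}$ and a fixed metric entourage (the paper takes the entourage $U_{2}$ of width~$2$). The paper simply asserts equicontinuity without justification, whereas you supply the Lipschitz argument via the uniform bound $L=\max_{k\le\dim X}L_{k}$ and the path-metric extension; this is exactly the missing detail, and it is where finite-dimensionality is genuinely used.
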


\begin{proof}
We consider the entourage $U_{2}$ of width $2$.
We define the equicontinuous partition of unity
$(\chi_{v})_{v\in X^{(0)}}$  using the baricentric coordinates of the simplices,   where $X^{(0)}$ is the set of vertices of $X$.
If $\sigma$ is a simplex in $X$ and $x$ is a point in the interior of $\sigma$, then
$\chi_{v}(x)\not=0$ exactly if $v$ is a vertex of $\sigma$. 
Hence for every point $x$  the number of vertices $v$ of $X$ with $\chi_{v}(x)\not=0$ is bounded by $\dim(X)
+1$.

The support of $\chi_{v}$ is $U_{2}$-bounded for every vertex $v$ of $X$.
\end{proof}


Let $X$ be a numerable uniform bornological coarse space. By numerability of the uniform structure  we can choose an entourage  $U$  which is coarse and uniform    such that
there exists an equicontinuous and uniformly  point-wise  locally finite   partition of unity $(\chi_{\alpha})_{\alpha\in A}$ on $X$ such that
$\supp(\chi_{\alpha})$ is $U$-bounded for every $\alpha$ in $A$.  We choose a family of points $(x_{\alpha})_{\alpha\in A}$ in $X$ such that $x_{\alpha}\in \supp(\chi_{\alpha})$ for all $\alpha$ in $A$.
We can then define a map (not necessarily controlled)
\begin{equation}
\label{eqjk34trsd32}
X\to P_{U^{2}}(F_{\cU}(X))\, , \quad x\mapsto \sum_{\alpha\in A} \chi_{\alpha}(x) \delta_{x_{\alpha}}\, .
\end{equation}
  This map is  uniform. Note that at this point
  we use the uniformity of   the  point-wise local finiteness condition, because we measure distances in the simplices of  $P_{U^{2}}(F_{\cU}(X))$ in the spherical metric and not in the maximum metric with respect to baricentric coordinates, cf.~\cite[Ex.~5.37]{buen}.
  
The map defined {in \eqref{eqjk34trsd32}} can also be regarded as a morphism of  uniform bornological coarse spaces
$\tilde c:X_{U}\to P_{U^{2}}(F_{\cU}(X))$.
It induces a compatible system of  morphisms
$$\Yo\cB^{s}(X_{U})\to  \Yo\cB^{s}(P_{U^{2}}(F_{\cU}(X)))\to \bP(\bF(X))$$ for all  sufficiently large entourages $U$ of $X$,  and by $u$-continuity of $\Yo \cB^{s}$, a morphism
$$c_{X}\colon \Yo\cB^{s}(X)\to  \bP(\bF(X))\, .$$

\begin{ddd}\label{defn234eds}
For a numerable uniform bornological coarse space the morphism $c_{X}$ is called the comparison map.
\hB
\end{ddd}

%
%

\begin{rem}\label{ijfoewewfewfwefwef}
We must assume that $X$ is numerable in order to produce a uniform map $X\to P_{U^{2}}(F_{\cU}(X))$ by \eqref{eqjk34trsd32}. 

In the classical approach to the coarsification of locally finite homology theories (see, e.g., Higson--Roe \cite[Sec.~3]{hr}) one only needs a coarse and continuous map. In this case the same formula works, and we only have to assume that the members of the partition of unity have uniformly controlled support. The existence of such a partition of unity follows from the compatibility of the uniform and the coarse structure if we in addition assume  that the underlying topological space of  $X$ is paracompact.

In our approach we must work with uniform maps since this is required by functoriality of the cone functor $\cO$ which we employ below in order to construct the assembly map.
\hB
\end{rem}

\begin{lem}
Up to equivalence the comparison map does not depend on the choice of the partition of unity.
\end{lem}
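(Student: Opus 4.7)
The plan is to produce, for any two choices of numerability data, a uniform homotopy between the associated maps into a common Rips complex and then to invoke homotopy invariance and $u$-continuity of $\Yo\cB^{s}$.

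So suppose we have two sets of data $(U_{0},(\chi^{0}_{\alpha})_{\alpha\in A_{0}},(x^{0}_{\alpha})_{\alpha\in A_{0}})$ and $(U_{1},(\chi^{1}_{\beta})_{\beta\in A_{1}},(x^{1}_{\beta})_{\beta\in A_{1}})$ as in Definition \ref{fewoifwefewfewf}, giving rise to maps $\tilde c_{i}:X_{U_{i}}\to P_{U_{i}^{2}}(F_{\cU}(X))$ and to comparison maps $c_{X}^{0},c_{X}^{1}$. First I would replace $U_{0}$ and $U_{1}$ by a common entourage $U$ that is both coarse and uniform and contains $U_{0}\cup U_{1}$, and then choose an entourage $V$ of $X$, again both coarse and uniform, that contains $U^{2}$ as well as all pairs $(x^{0}_{\alpha},x^{1}_{\beta})$ with $\supp(\chi^{0}_{\alpha})\cap\supp(\chi^{1}_{\beta})\neq\emptyset$. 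Both maps can then be viewed as morphisms $\tilde c_{i}:X_{U}\to P_{V}(F_{\cU}(X))$ of uniform bornological coarse spaces.

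Next I would define the straight-line homotopy
\[
h:[0,1]\otimes X\longrightarrow P_{V}(F_{\cU}(X))\ ,\qquad h(t,x):=(1-t)\,\tilde c_{0}(x)+t\,\tilde c_{1}(x)\ ,
\]
which is a well-defined probability measure on $X$ with $V$-bounded support. The main step is to check that $h$ is actually a morphism in $\UBC$. Controlledness and properness follow from the fact that the supports of $\chi^{0}_{\alpha}$ and $\chi^{1}_{\beta}$ that contribute to $h(t,x)$ all meet a fixed $U$-neighbourhood of $x$, so that $h$ sends $[0,1]\otimes B$ into the subcomplex $P_{V}(U[B])$ for every bounded $B\subseteq X$, which is bounded by compatibility of the structures on the Rips complex as in Example \ref{efwoiuweofwefewfewfw}. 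Uniformity requires a bit more: given $\varepsilon>0$, equicontinuity of both partitions of unity yields a uniform entourage $W$ of $X$ on which each $|\chi^{i}_{\gamma}|$ oscillates by at most $\varepsilon$, and uniform point-wise local finiteness bounds the number of summands that change; combined with the $V$-boundedness of supports this controls the spherical distance between $h(t,x)$ and $h(t',x')$ whenever $|t-t'|$ is small and $(x,x')\in W$. Thus $h$ is a well-defined homotopy in $\UBC$ between $\tilde c_{0}$ and $\tilde c_{1}$ as maps $X_{U}\to P_{V}(F_{\cU}(X))$.

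Finally, homotopy invariance of $\Yo\cB^{s}$ gives $\Yo\cB^{s}(\tilde c_{0})\simeq \Yo\cB^{s}(\tilde c_{1})$ as maps $\Yo\cB^{s}(X_{U})\to \Yo\cB^{s}(P_{V}(F_{\cU}(X)))$, and postcomposing with the structure maps into $\bP(\bF(X))$ gives the required equivalence at the level $X_{U}$. Passing to the colimit over the cofinal system of entourages $U$ and using $u$-continuity of $\Yo\cB^{s}$ (as in the construction of $c_{X}$ just before Definition \ref{defn234eds}) yields $c_{X}^{0}\simeq c_{X}^{1}$. The main obstacle in executing this plan is the verification that the linear interpolation $h$ is genuinely uniform, since the spherical metric on the Rips complex is not the maximum metric in barycentric coordinates; but this is exactly what the equicontinuity and uniform point-wise local finiteness assumptions in Definition \ref{fewoifwefewfewf} are designed to give.
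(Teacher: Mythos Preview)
Your approach is correct and is essentially the same as the paper's: both use the straight-line homotopy between the two maps into a common Rips complex and appeal to homotopy invariance of $\Yo\cB^{s}$. The paper's proof is terser---it reduces at the outset to the case where both partitions of unity are subordinate to the same entourage $U$ (which, as you implicitly show, is justified by enlarging the entourage), and then simply observes that $(1-s)\tilde c + s\tilde c'$ is a homotopy and that $\tilde c$ and $\tilde c'$ are $U^{2}$-close. Your version is more explicit about why the interpolation lands in a fixed Rips complex and why it is a morphism in $\UBC$, and it handles the general case of two distinct entourages directly rather than via a preliminary reduction; but the underlying argument is the same.
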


\begin{proof}
We consider a second choice of partition of unity (without loss of generality for the same entourage $U$) and denote the associated morphism by
$\tilde c^{\prime}\colon X_{U}\to P_{U^{2}}(F_{\cU}(X))$.
Then $\tilde c$ and $\tilde c'$ are $\pi/2$-close to each other and
$s\mapsto (1-s)  \tilde c +s\tilde c^{\prime}$
is a uniform homotopy between $\tilde c$ and $\tilde c^{\prime}$.
  We now use that $\Yo\cB^{s}$ is homotopy invariant.
\end{proof}

Let $f\colon X\to X^{\prime}$ be a morphism of uniform bornological coarse spaces which are assumed to be numerable.
\begin{lem}
We have an equivalence 
$$c_{X^{\prime}} \circ  \Yo\cB^{s}(f)\simeq (\bP\circ \bF)(f) \circ c_{X}\, .$$
\end{lem}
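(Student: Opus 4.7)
The plan is to realize both composed morphisms as coming from explicit uniform bornological coarse maps into a common Rips complex, and then verify that the two resulting maps are simultaneously homotopic and close for a suitable coarse entourage, so that their images under $\Yo\cB^s$ agree up to canonical equivalence.

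First, I would choose data in a coordinated way. Pick an entourage $U$ of $X$ and an entourage $U'$ of $X'$, both coarse and uniform, such that $(f\times f)(U)\subseteq U'$, together with equicontinuous, uniformly locally finite partitions of unity $(\chi_\alpha)_{\alpha\in A}$ on $X$ and $(\chi'_\beta)_{\beta\in B}$ on $X'$ whose members have $U$- resp.\ $U'$-bounded support. Fix base points $x_\alpha\in\supp(\chi_\alpha)$ and $x'_\beta\in\supp(\chi'_\beta)$. These give $\tilde c\colon X_U\to P_{U^2}(F_\cU(X))$ and $\tilde c'\colon X'_{U'}\to P_{(U')^2}(F_\cU(X'))$ used to define $c_X$ and $c_{X'}$. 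Via push-forward of measures and functoriality of $P$ from Example~\ref{efwoiuweofwefewfewfw}, $\tilde c$ induces a morphism $P(f)\circ\tilde c\colon X_U\to P_V(F_\cU(X'))$ for $V$ any coarse entourage containing $(f\times f)(U^2)\cup (U')^2$.

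Second, both $P(f)\circ\tilde c$ and $\tilde c'\circ f$ are morphisms $X_U\to P_V(F_\cU(X'))$ for $V$ large enough. For a point $x\in X$, the first sends $x$ to $\sum_\alpha\chi_\alpha(x)\delta_{f(x_\alpha)}$ and the second to $\sum_\beta\chi'_\beta(f(x))\delta_{x'_\beta}$. Both measures are supported in the subset $S(x):=\{f(x_\alpha):\chi_\alpha(x)\neq 0\}\cup\{x'_\beta:\chi'_\beta(f(x))\neq 0\}$ of $X'$. Since $\supp(\chi_\alpha)$ is $U$-bounded and $(f\times f)(U)\subseteq U'$, all $f(x_\alpha)$ occurring are in $U'[f(x)]$, and similarly all $x'_\beta$ occurring are in $U'[f(x)]$; hence $S(x)$ is $V'$-bounded for $V':=U'\circ(U')^{op}$. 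Consequently the linear interpolation $h_s(x):=(1-s)(P(f)\circ\tilde c)(x)+s(\tilde c'\circ f)(x)$ is well-defined and takes values in a single simplex of $P_{V'}(F_\cU(X'))$ for each $x$. Equicontinuity of both partitions of unity implies that $h$ is a morphism $[0,1]\otimes X_U\to P_{V'}(F_\cU(X'))$ in $\UBC$; in particular, this is a homotopy between the two maps in the sense used to define $\Yo\cB^s$. Moreover the two maps are $V'$-close for the coarse structure on $P_{V'}(F_\cU(X'))$, since for every $x$ both values lie in the same bounded simplex.

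Third, applying $\Yo\cB^s$ (which is homotopy invariant and coarsely invariant) gives an equivalence $\Yo\cB^s(P(f)\circ\tilde c)\simeq\Yo\cB^s(\tilde c'\circ f)$ in $\Sp\cB$. Passing to the colimits over entourages as in the definition of $c_X$ and $c_{X'}$, and using $u$-continuity of $\Yo\cB^s$ plus the naturality already built into $\bP\circ\bF$ via \eqref{vrvheioveirvioerververvrev} and the construction of $\bP$ as a left Kan extension, we obtain the required identification
\[
c_{X'}\circ\Yo\cB^s(f)\simeq (\bP\circ\bF)(f)\circ c_X.
\]
The only subtle point, which is the main technical obstacle, is bookkeeping the entourages: one has to choose $V$ and $V'$ consistently so that the simultaneous closeness and homotopy land in the same Rips complex, and so that the colimits defining $c_X$, $c_{X'}$ and $(\bP\circ\bF)(f)$ can all be indexed compatibly. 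This is handled exactly as in the proof of the previous lemma on independence of the partition of unity, now with the added freedom of varying the partition on $X'$.
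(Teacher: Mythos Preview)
Your proof is correct and follows essentially the same approach as the paper: set up the square with $\tilde c_X$, $\tilde c_{X'}$, $f$, and $P(F_\cU(f))$, then observe that the two compositions $P(F_\cU(f))\circ\tilde c_X$ and $\tilde c_{X'}\circ f$ are close and linearly homotopic, whence $\Yo\cB^s$ identifies them. You supply more detail on the entourage bookkeeping than the paper does, but the argument is the same; one small remark is that the appeal to ``coarse invariance'' of $\Yo\cB^s$ is unnecessary, since the linear interpolation already gives a homotopy in $\UBC$ and homotopy invariance alone suffices.
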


\begin{proof}
After choosing partitions of unity  for $X$ and $X^{\prime}$ with bounds $U$ and $U^{\prime}$ such that $(f\times f)(U)\subseteq U^{\prime}$
we have a square  (not necessarily commuting) of morphisms of uniform bornological coarse spaces
\[\xymatrix{
X_U \ar[rr]^-{\tilde c_{X}}\ar[d]^{f} & & P_{U^2}(F_{\cU} (X))\ar[d]^{P(F_{\cU}(f))}\\
X^{\prime}_{U^\prime} \ar[rr]^-{\tilde c_{X^{\prime}}} & & P_{U^{\prime,2}}(F_{\cU}(X^{\prime}))
}\]
We now observe that the compositions
$P(F_{\cU}(f))\circ \tilde c_{X}$ and $\tilde c_{X^{\prime}}\circ f$ are
close to each other and (linearly) homotopic. Hence they become equivalent after application of $\Yo\cB^{s}$.
\end{proof}

%

Let $Y$ be a uniform bornological coarse space.
 
\begin{ddd}\label{wefiowef34t6453}
We say that $Y$ is coarsifying if it is numerable and  the comparison map $c_{Y}$
 is an equivalence. 
\end{ddd}

Let $E$ be a local homology theory.
If $Y$ is coarsifying, then the comparison map induces   an equivalence
\[E(c_{X})\colon   E(Y)\xrightarrow{\simeq} E\bP(F_{\cU}(Y))\, .\]

Let $X$ be a numerable uniform bornological coarse space.
\begin{ddd}
A morphism $f\colon X\to Y$ in $\UBC$ is called a coarsifying approximation if
$Y$ is coarsifying and {$(\bP\circ \bF)(f)$}
is an equivalence.
\end{ddd}

Let $E$ be a local homology theory. If $X\to Y$ is a coarsifying approximation,
then by construction we have an equivalence  
$$E\bP(F_{\cU}(X))\simeq E(Y)\, .$$

In the following we discuss  an important class of examples of coarsifying spaces, see also \cite[Sec.~7.4]{buen}.

Below  $B^{q+1}$ is the unit ball in $\IR^{q+1}$ and $S^q$ is its boundary.
\begin{ddd}
\begin{enumerate}
\item A simplicial complex $K$ has bounded geometry if the number of vertices in the stars of its vertices is uniformly bounded.
\item A metric space $X$ is equicontinuously contractible, if for every $q$ in $\IN$ and for every equicontinuous family of maps $\{\varphi_i\colon  S^q \to X\}_{i \in I}$ there exists an equicontinuous family of maps $\{\Phi_i \colon  B^{q+1} \to X\}_{i \in I}$ with ${\Phi_i}|_{\partial B^{q+1}} = \varphi_i$.\footnote{This is a slight strengthening of the notion of uniform contractibility which is commonly used in the coarse geometry literature.}
\end{enumerate}
\end{ddd}

 Let $K$ be a simplicial complex. We get a \ubs $K_{d}$ by equipping $K$ with the  bornology of bounded subsets and the metric coarse and uniform structures.

Let $A$  be a subcomplex of $K$, $X$ be a metric space and $f \colon  K_d \to X_d$ be a morphism of bornological coarse spaces such that $f|_A$ is uniformly continuous.

\begin{lem}\label{lem1243treweq} If $K$ is finite-dimensional and $X$ is equicontinuously contractible, then $f$ is close to a morphism of uniform bornological coarse spaces which extends $f|_A$, and which is in addition uniformly continuous.
 \end{lem}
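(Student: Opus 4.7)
The plan is to construct $g$ by induction over the skeleta of $K$, extending $f|_A$ one dimension at a time and using the equicontinuous contractibility of $X$ to fill in the cells; since $K$ is finite-dimensional, the induction terminates after at most $\dim(K)$ steps. Throughout the induction I will maintain the invariants that $g$ is uniformly continuous, extends $f|_A$, agrees with $f$ on every vertex of $K$, and is uniformly close to $f$.

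For the base step, define $g|_A := f|_A$ and $g(v) := f(v)$ for every vertex $v$ of $K$ not contained in $A$. In the spherical path metric any two distinct vertices of $K$ are at distance $\pi/2$, and any vertex outside $A$ is at distance at least $\pi/2$ from $A$ (since $A$ is a subcomplex, no simplex in $A$ contains such a vertex). Hence on $A \cup K^{(0)}$ any two points at distance less than $\pi/2$ both lie in $A$, and uniform continuity of the initial $g$ follows from the hypothesis on $f|_A$.

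For the inductive step, assume $g$ is constructed with the desired properties on $A \cup K^{(q-1)}$. For each $q$-simplex $\sigma \not\subseteq A$ restrict to obtain $g|_{\partial\sigma} : \partial\sigma \to X$. These boundary maps, as $\sigma$ varies, form an equicontinuous family, because all $q$-simplices are mutually isometric in the spherical metric and $g$ is uniformly continuous on $A \cup K^{(q-1)}$. Applying equicontinuous contractibility of $X$ produces an equicontinuous family of extensions $\Phi_\sigma : \sigma \to X$, and I set $g|_\sigma := \Phi_\sigma$ for such $\sigma$ and $g|_\sigma := f|_\sigma$ for $\sigma \subseteq A$. The resulting map on $A \cup K^{(q)}$ is uniformly continuous: equicontinuity yields a uniform modulus inside each simplex, and for $\delta$ sufficiently small any two points at distance less than $\delta$ either lie in a common simplex or in two simplices meeting along a shared face, the latter case being controlled by interpolating through a nearby point on the face. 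Closeness to $f$ is preserved, because at every vertex $v$ of $\sigma$ one has $g(v) = f(v)$, while both $f(\sigma)$ and $\Phi_\sigma(\sigma)$ have uniformly bounded diameter (controlledness of $f$ on the one hand, equicontinuity on a bounded domain on the other), so $d(g(x), f(x))$ admits a uniform bound as $x$ ranges over $\sigma$ and $\sigma$ ranges over $q$-simplices.

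Finally I verify that the terminal $g$ is a morphism of bornological coarse spaces. Properness follows from that of $f$ via closeness: a bounded set $B$ of $X$ pulls back under $g$ into the $f$-preimage of a bounded thickening of $B$, hence is bounded. Control of $g$ is automatic from uniform continuity combined with the path-metric structure on $K$: any two points at distance at most $R$ can be connected by a path which splits into boundedly many short segments on each of which uniform continuity applies. I expect the main obstacle to be the verification of uniform continuity across simplex boundaries in the inductive step, which needs a careful but elementary analysis of short paths in the spherical path metric; the other ingredients, in particular the derivation of uniform bounds on image diameters from equicontinuity, are essentially formal.
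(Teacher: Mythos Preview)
Your proposal is correct and follows precisely the standard induction over skeleta that the paper has in mind: the paper's own proof is just a pointer to \cite[Lem.~6.97]{buen}, whose argument is exactly this cell-by-cell extension using (equi)contractibility. One harmless slip: distinct vertices are at distance \emph{at least} $\pi/2$, not exactly $\pi/2$, but this is all you need for the base step.
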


\begin{proof}
The proof given in \cite[Lem.~7.72]{buen} (which covers the non-uniform version of this lemma) also works literally here.
\end{proof}

Let $K$ and $K'$ be two simplicial complexes and $f\colon K_{d}\to K'_{d}$ be a morphism between the underlying bornological coarse spaces.
\begin{lem}\label{oigjwtgwergergwgg}
If $K$ and $K'$ are equicontinuously contractible and $f$ is a coarse equivalence, 
then $f$ is close to a homotopy equivalence in $\UBC$ and any two such homotopy equivalences are homotopic to each other.
\end{lem}
\begin{proof}
The proof  is the same as for \cite[Lem.~7.73]{buen}
where we use Lemma~\ref{lem1243treweq} instead of Lemma   \cite[Lem.~7.72]{buen}
in order to get the additional uniformity.
\end{proof}

\begin{prop}\label{prop34redsfg}
If $K$ is a simplicial complex of bounded geometry which is equicontinuously contractible as a metric space, then the uniform bornological coarse space $K_{d}$ is coarsifying.
\end{prop}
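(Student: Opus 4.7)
The plan is to verify the two conditions in Definition~\ref{wefiowef34t6453}: that $K_d$ is numerable, and that the comparison map $c_{K_d}:\Yo\cB^s(K_d)\to\bP(\bF(K_d))$ is an equivalence. Numerability is immediate: bounded geometry of $K$ implies $\dim K<\infty$, since an upper bound on the number of vertices in a star caps the local dimension (every $n$-simplex through a vertex $v$ contributes $n$ further vertices to $\operatorname{star}(v)$), so Lemma~\ref{fijwjifofewfewfwf} applies.

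To show $c_{K_d}$ is an equivalence, I would construct, for every sufficiently large coarse entourage $U$ of $K_d$, a retraction-type morphism
$$r_U:P_U(F_\cU(K_d))\longrightarrow K_d$$
in $\UBC$ that restricts to the identity on the vertex set $F_\cU(K_d)\subseteq P_U(F_\cU(K_d))$. Bounded geometry of $K$ passes to $P_U(F_\cU(K_d))$, which is thus a finite-dimensional simplicial complex. Applying Lemma~\ref{lem1243treweq} with the $0$-skeleton as $A$, the identity on vertices as the prescribed uniformly continuous restriction, and the equicontinuously contractible $K_d$ as target, yields a uniformly continuous extension close to any chosen coarse extension; replacing within a closeness class does not affect its class as a morphism in $\UBC$.

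With the family $\{r_U\}_U$ in hand, I would verify that it is a two-sided inverse to the comparison map up to closeness and linear homotopy. For one direction, $r_{U^2}\circ\tilde c_U:(K_d)_U\to K_d$ sends a point $x$ into the image of the simplex spanned by base points $x_\alpha$ with $\chi_\alpha(x)\neq 0$, each of which is $U$-close to $x$; uniform continuity of $r_{U^2}$ then shows $r_{U^2}\circ\tilde c_U$ is close to the identity in a controlled entourage. For the other direction, $\tilde c_U\circ r_{U^2}$ and the canonical inclusion $P_{U^2}(F_\cU(K_d))\hookrightarrow P_V(F_\cU(K_d))$ (for some larger entourage $V$) agree on vertices up to the partition-of-unity reassignment and are linearly homotopic since the target is a simplicial complex. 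Passing to $\Yo\cB^s$ and taking the colimit over $U\in\cC$ then exhibits $c_{K_d}$ as an equivalence.

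The principal technical obstacle is ensuring that the extension produced by Lemma~\ref{lem1243treweq} can be arranged to be a morphism of bornological coarse spaces (both proper and controlled), not merely uniformly continuous. Control follows from uniform continuity together with bounded geometry of $P_U(F_\cU(K_d))$, while properness reduces to checking that $r_U^{-1}$ of a bounded subset of $K_d$ stays in a Rips subcomplex on a bounded vertex set; bounded geometry prevents uniformly continuous maps from spreading simplices over uncontrolled distances, so this step reduces to careful bookkeeping.
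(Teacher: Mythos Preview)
Your approach is morally the same as the paper's, but there are two genuine gaps.

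\textbf{First gap (the decisive one).} The claim that ``bounded geometry of $K$ passes to $P_U(F_\cU(K_d))$, which is thus a finite-dimensional simplicial complex'' is false. The vertex set of $P_U(F_\cU(K_d))$ is the entire underlying set of $K_d$, not the $0$-skeleton $K^{(0)}$, and a metric ball in $K_d$ contains arbitrarily large finite $U$-bounded subsets. Hence $P_U(F_\cU(K_d))$ is infinite-dimensional and Lemma~\ref{lem1243treweq} cannot be invoked as you do to produce $r_U$ in one stroke. The paper gets around this by citing the construction in \cite[Prop.~6.105]{buen}: one builds the maps $g_n\colon P_{U_{n+1}}(F_\cU(K_d))\to K_d$ skeleton by skeleton, using equicontinuous contractibility at each stage, and one must separately verify that the result is controlled and proper. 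Your ``careful bookkeeping'' paragraph hints at the right concerns, but the argument you outline relies on finite-dimensionality that is not available.

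\textbf{Second gap.} Even granting the retractions $r_U$ and the individual homotopies $r_{U^2}\circ\tilde c_U\simeq\id$ and $\tilde c_U\circ r_{U^2}\simeq\mathrm{incl}$, the sentence ``passing to $\Yo\cB^s$ and taking the colimit over $U\in\cC$ then exhibits $c_{K_d}$ as an equivalence'' is too fast. The maps $\Yo\cB^s(r_U)$ are constructed one $U$ at a time with no coherence, so they do not assemble into a morphism out of the filtered colimit $\bP(\bF(K_d))$. The paper addresses exactly this point: instead of producing an inverse on the colimit, it tests against an arbitrary object $T\in\Sp\cB$, reduces to showing
\[
\lim_n \Map(\Yo\cB^s(P_{U_n}(F_\cU(K_d))),T)\to \Map(\Yo\cB^s(K_d),T)
\]
is an equivalence, checks this on homotopy groups using the relations $g_n\circ(f_n\circ\cdots\circ f)\simeq\id$ and $(f_n\circ\cdots\circ f)\circ g_{n-1}\simeq f_n$, and then verifies the Mittag--Leffler condition to pass from $\pi_*$ of a limit to the limit of $\pi_*$. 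Your linear-homotopy observation is essentially the second of these relations, so the ingredients are there; what is missing is the Mittag--Leffler step.
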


\begin{proof}
Note that $K$ is finite-dimensional and hence $K_{d}$ is numerable by Lemma~\ref{fijwjifofewfewfwf}. The verification that the comparison map for $K_{d}$ is an equivalence is the core of the argument of \cite[Prop.~7.80]{buen}, which is itself taken from Nowak--Yu \cite[Proof of Thm.~7.6.2]{nowak_yu}. 

As in the beginning of the proof of \cite[Prop.~7.80]{buen} 
we construct a diagram of maps
\[\xymatrix{
K_{d} \ar@/^1pc/[r]^-{f} & P_U(F_{\cU}(K^{(0)}_{d}))\ar@/^1pc/[r]^-{f_0} \ar[l]^<{g} & P_{U_1}(F_{\cU}(K^{(0)}_{d})) \ar@/^1pc/[r]^-{f_1} \ar@/^1.3pc/[ll]^<{g_0} & P_{U_2}(F_{\cU}(K^{(0)}_{d})) \ar@/^1pc/[r]^-{f_2} \ar@/^2.5pc/[lll]^<{g_1} & \cdots
}\]
with $g_{n}\circ (f_{n}\circ \dots\circ f_{0}\circ f)$ being homotopic in $\UBC$ to $\id_{K_{d}}$, and $(f_{n}\circ \dots\circ f_{0}\circ f)\circ g_{n-1}$ being homotopic to $f_{n}$ in $\UBC$. 
Here we use the  Lemmas~\ref{lem1243treweq} and \ref{oigjwtgwergergwgg} instead of \cite[Lem.~7.72 and 7.73]{buen}. The stronger assumption  that $K$ {is} equicontinuously contractible (instead of {just} uniformly contractible {as} in \cite{buen}) implies that the resulting maps are uniformly continuous (instead of {just} continuous {as} in \cite{buen}).

We claim that the induced comparison map
$$\Yo^{s}\cB(K_{d})\to \colim_{n\in \nat} \Yo^{s}\cB(P_{U_{n}}(F_{\cU}(K^{(0)}_{d}))) \simeq \bP(\bF(K^{(0)}_{d}))$$
is an equivalence. 
This is in fact an instance of the following general fact\footnote{We thank the referee for suggesting this simple argument.}. Assume that we have a diagram
$$A_{0}\xrightarrow{f_{0}} A_{1} \xrightarrow{f_{1}} A_{2} \xrightarrow{\phantom{f_1}} \dots$$
in a stable $\infty$-category such that the maps $A_{0}\to A_{i}$ admit retracts $g_{i}$ for all $i$ in $\nat$ and such that $f_{i}$ is equivalent to $A_{i}\xrightarrow{g_{i}} A_{0}\to A_{i+1}$.
Since in a stable $\infty$-category retracts split as sums the diagram
is equivalent to a sum of a constant diagram build from $A_{0}$ and identity maps and a diagram with zero transition maps. This implies that
$A_{0}\to \colim_{\nat} A_{i}$ is {an} equivalence.

The inclusion
$K_{d}^{(0)}\to K_{d}$ is an equivalence of the underlying bornological coarse spaces and therefore induces the second equivalence in
\[
\Yo^{s}\cB(K_{d})\xrightarrow{\simeq} \bP(\bF(K^{(0)}_{d}))\xrightarrow{\simeq} \bP(\bF(K_{d}))\,.\qedhere
\]
\end{proof}

\begin{ex}
The following is taken from \cite[Ex.~7.71]{buen} and originally goes back to Gromov \cite[Ex.~1.D$_1$]{gromov}:

Let $G$ be a finitely generated group admitting a model for its classifying space $BG$ which is a finite simplicial complex. Then the universal cover $EG$ of $BG$ is a simplicial complex of bounded geometry which is {equicontinuously} contractible, i.e., $EG_d$ is coarsifying by the above Proposition~\ref{prop34redsfg}.

The group $G$ quipped with a word-metric becomes a metric space and hence a uniform bornological coarse space $G_d$. The action of $G$ on $EG$ provides a morphism $f\colon G_d \to EG_d$ in $\UBC$ which depends on the choice of a base-point in $EG$. The morphism $(\bP\circ \bF)(f)$  is an equivalence since $f$ is a coarse equivalence between the underlying bornological coarse spaces and hence $\bF(f)$ is an equivalence.  
Therefore we have shown that $f\colon G_d \to EG_d$ is a coarsifying approximation.
\hB
\end{ex}

\section{Cone functors}\label{blijeobereggreerg}

In this section we describe the cone functor $\cO\colon \UBC\to \BC$ and its germs at infinity $\cO^{\infty}\colon \UBC\to \Sp\cX$.
These functors play a crucial role in the construction of the coarse assembly map. After the introduction of the cone functor, we compare it with variants which occur in the literature on coarse geometry and which are useful in certain arguments.
 
In short,  the cone of a \ubs $X$ is the bornological coarse space $\cO(X)$  obtained from the bornological coarse space 
$F_{\cU}([0,\infty)\otimes  X) $ by replacing the coarse structure by the hybrid structure {(cf.~\cite[Sec.~5.1]{buen})}
associated to the family of subsets $\cY:=([0,n]\times X)_{n\in \nat}$ and the uniform structure on $[0,\infty)\otimes  X$.

In the following we spell out the definition of the cone explicitly.  
 Let $\cT$ denote the uniform structure of $X$. We consider $\cP_{X\times X}$  and its subset $\cT$ with the opposite of the inclusion relation.  By \cite[Def.~5.9]{buen} a function (i.e., an order preserving map) $\phi\colon [0,\infty)\to \cP_{X\times X}$ is called  $\cT$-admissible if for every uniform entourage $U$ in $\cT$ there exists an element $t$ in $[0,\infty)$ such that $\phi(s)\subseteq U$ for all $s$ in $[t,\infty)$.

\begin{ddd}\label{rgfporgergergereg1}
We let $ \cO(X)$  be the bornological coarse space defined as follows:
\begin{enumerate}
\item The underlying set of $  \cO(X)$ is $[0,\infty)\times X$.
\item The bornology of $  \cO(X)$ is generated by the subsets $[0,n]\times B$ for all $n$ in $\nat$ and bounded subsets $B$ of $X$. 
\item \label{fiewjioffwefwefwef2} The coarse structure of $ \cO(X)$
is generated by  the entourages of the form
$V\cap U_{(\kappa,\phi)}$,  where    $V$  is  a coarse entourage of $[0,\infty)\otimes X$ and
\[\mathclap{
U_{(\kappa,\phi)}:=\{((s,x),(t,y))\in ([0,\infty)\times X)^{2} \:|\: |s-t|\le \kappa(\max\{s,t\})\:\&\:  (x,y)\in  \phi(\max\{s,t\})\}\, .
}\]
for all  $\cT$-admissible functions
$\phi\colon [0,\infty)\to \cP_{X\times X}$ and  functions $\kappa\colon [0,\infty)\to [0,\infty)$ satisfying $\lim_{t\to \infty} \kappa(t)=0$.
\end{enumerate}
\end{ddd}

If $f:X\to X^{\prime}$ is a morphism of uniform bornological coarse spaces, then
the map
$$\id_{[0,\infty)}\times f:[0,\infty)\times X\to [0,\infty)\times X$$ is a morphism of bornological coarse spaces
$$\cO(f):\cO(X)\to \cO(X^{\prime})\, .$$
We thus have described the cone functor
$$\cO\colon \UBC\to \BC\, .$$
The maps of sets 
$X \to \{0\}\times X\to [0,\infty)\times X$ for $X$ in $\UBC$
induce a natural transformation
of functors
$$F_{\cU}\to \cO\colon \UBC\to \BC\, .$$
We apply $\Yo^{s}$ and take the cofibre in order to get a cofibre sequence (called the cone sequence)
\begin{equation}\label{23ru90rr}
 \Yo^{s}(F_{\cU}(X))\to \Yo^{s}(\cO(X))\to \cO^{\infty}(X)\to \Sigma \Yo^{s}(F_{\cU}(X))\, ,
\end{equation}
in $\Sp\cX$ which is functorial for $ X$  in $\UBC$,
 where, by definiton
 $$\cO^{\infty}(X)\coloneqq \Cofib\big(  \Yo^{s}(F_{\cU}(X)) \to \Yo^{s}(\cO(X))\big)\, .$$

\begin{ddd}\label{eroiergegggg}
 We call the resulting functor $\cO^{\infty}\colon \UBC\to \Sp\cX$
 the germs at infinity  of the cone.
 \end{ddd}

In order to connect with \cite[Sec.~5.2.3]{buen} note the following.
Let $X$ be a uniform bornological coarse space. Then $\cY(X):=([0,n]\times X)_{n\in \nat}$ is a big family in $\cO(X)$.
For every $n$ in $\nat$ the inclusion
$X\to \{0\}\times X\to [0,n]\times X$ induces a coarse equivalence,  and hence   an equivalence
$\Yo^{s}(F_{\cU}(X))\to \Yo^{s}(([0,n]\times X)_{\cO(X)})$  in $\Sp\cX$, where the subscript $\cO(X)$ indicates that the structures on the subset are induced from $\cO(X)$.
The collection of these equivalences for all $n$ in $\nat$ induces an equivalence 
$$\Yo^{s}(F_{\cU}(X))\simeq \Yo^{s}(\cY(X))$$ in $\Sp\cX$.
The pair sequence of $(\cO(X),\cY(X))$ is therefore equivalent to the cone sequence \eqref{23ru90rr},
in particular we have an equivalence 
$$\cO^{\infty}(X)\simeq (\cO(X),\cY(X))\, ,$$ where the right-hand side is interpreted as in \cite[(4.5)]{buen}.
   
We refer to \cite[Ex.~5.16]{buen} and \cite[Sec.~9]{equicoarse} for more details. In particular, in \cite[Prop. 9.31]{equicoarse}
we show that $\cO^{\infty}$ is represented by the bornological coarse space 
\begin{equation}\label{ewfoijfoiqjfoiqefe}
\cO^{\infty}_{\geom}(X):=((-\infty,0]\otimes X)\sqcup_{\{0\}\times X}\cO(X)\, , 
\end{equation}
where the push-out is interpreted in $\BC$.


 In the proof of Proposition \ref{ergioerogergregreg} below it is useful to use a modified version of the cone over a uniform bornological coarse space $X$ which we will denote by $\tilde \cO(X)$.   
\begin{ddd}\label{rgfporgergergereg}We let $\tilde \cO(X)$  be the bornological coarse space defined as follows:
\begin{enumerate}
\item The underlying set of $\tilde \cO(X)$ is $[0,\infty)\times X$.
\item The bornology of $\tilde \cO(X)$ is generated by the subsets $[0,n]\times B$ for all $n$ in $\nat$ and bounded subsets $B$ of $X$ 
\item \label{fiewjioffwefwefwef} The coarse structure of $\tilde \cO(X)$
is generated by the  entourages of the form
$V\cap U_{\phi}$,  where    $V$  is  a coarse entourage of $[0,\infty)\otimes X$ and
$$U_{\phi}:=\{((s,x),(t,y))\in ([0,\infty)\times X)\times ([0,\infty)\times X)\:|\: (x,y)\in  \phi(\max\{s,t\})\}$$
for all $\cT$-admissible   functions
$\phi\colon[0,\infty)\to\cP_{X\times X}$.  
\end{enumerate}
\end{ddd}
Note that the underlying bornological spaces of $\cO(X)$, $\tilde \cO(X)$ and   $[0,\infty)\otimes X$ coincide.
The identity map of the underlying sets induces a morphism \begin{equation}\label{rfviuhwiofwfwefwefewfef}
i \colon \cO(X) \to \tilde \cO(X)
\end{equation}
which is natural in $X$.

\begin{lem}\label{ewfiowefwefergergergerg}
The morphism \eqref{rfviuhwiofwfwefwefewfef} induces an equivalence
\[
\Yo^{s}(i) \colon \Yo^{s}(\cO(X))\to \Yo^{s}(\tilde \cO(X))\, .
\]
\end{lem}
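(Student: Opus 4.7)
The identity map $i:\cO(X)\to\tilde\cO(X)$ is a well-defined morphism of bornological coarse spaces because each generator $V\cap U_{(\kappa,\phi)}$ of the coarse structure of $\cO(X)$ is contained in $V\cap U_\phi$, a generator of $\tilde\cO(X)$. The two spaces share the same underlying set, the same bornology, and the same big family $\cY(X)=([0,n]\times X)_{n\in\nat}$; they differ only in the coarse structure, with $\cO(X)$ additionally imposing the vanishing-at-infinity decay condition $|s-t|\le\kappa(\max\{s,t\})$ for some $\kappa$ with $\lim_{t\to\infty}\kappa(t)=0$.

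My plan is to exploit the cone sequence and $u$-continuity. First I would verify that the restriction of both coarse structures to each member $[0,n]\times X$ of the big family $\cY(X)$ yields the same coarse structure after cofinal refinement: on a bounded interval, the $\kappa$-bound $\sup_{r\in[0,n]}\kappa(r)$ is just a constant that is absorbed into the $V$-control coming from a coarse entourage of $[0,\infty)\otimes X$, so $\cY(X)$ has the same motive whether viewed inside $\cO(X)$ or inside $\tilde\cO(X)$. Then, since $i$ is the identity on underlying sets, it induces a morphism of cone sequences
\[
\xymatrix@C=1.1em{
\Yo^{s}(\cY(X))\ar[r]\ar[d]^{\simeq} & \Yo^{s}(\cO(X))\ar[r]\ar[d]^{\Yo^{s}(i)} & \cO^{\infty}(X)\ar[r]\ar[d] & \Sigma \Yo^{s}(\cY(X))\ar[d]^{\simeq}\\
\Yo^{s}(\cY(X))\ar[r] & \Yo^{s}(\tilde\cO(X))\ar[r] & \tilde\cO^{\infty}(X)\ar[r] & \Sigma \Yo^{s}(\cY(X))
}
\]
whose outer vertical arrows are equivalences. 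By the two-out-of-three property for cofibre sequences in a stable $\infty$-category, it then suffices to exhibit an equivalence on the third column, i.e., to compare the germs at infinity.

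For this germ comparison I would construct an auxiliary morphism $j:\tilde\cO(X)\to\cO(X)$ by reparametrizing the $[0,\infty)$-coordinate sublinearly, e.g.\ $(s,x)\mapsto(\sqrt{s},x)$; the key estimate $|\sqrt{s}-\sqrt{t}|\le|s-t|/\sqrt{\max\{s,t\}}$ turns the $V$-bound $|s-t|\le M$ into a $\kappa$-bound with $\kappa(r)=M/r$, so that $j$ is indeed controlled with respect to the $\cO$-structure on the target (properness is clear). At the level of germs the compositions $i\circ j$ and $j\circ i$ will not be coarsely close to the identity, but I expect them to be identified with the identity modulo the big family $\cY(X)$: the two reparametrizations $s\mapsto s$ and $s\mapsto\sqrt{s}$ differ by a shift along $[0,\infty)$ that is absorbed into $\cY(X)$ after passing to the cofibre, which is precisely the content of being an equivalence on $\cO^\infty$.

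The main obstacle will be this last step, namely the rigorous verification that $i\circ j$ and $j\circ i$ induce the identity on $\cO^\infty(X)$ and $\tilde\cO^\infty(X)$. Since these maps are not coarsely close to the identity, a naive closeness argument fails; the argument must use a genuine flasqueness-type input, exploiting that the relative reparametrization produces a map whose orbit is swallowed by $\cY(X)$ upon $u$-continuous passage to the colimit. It is here that one uses the specific structure of the hybrid coarse structure at infinity, where the decay condition on $\kappa$ renders the $[0,\infty)$-direction motivically redundant once the big family has been quotiented out.
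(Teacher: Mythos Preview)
Your reduction via the cone sequence is a detour: once you have constructed a candidate inverse $j$ and must show that $i\circ j$ and $j\circ i$ induce the identity, you are essentially back to the original problem, and the passage to germs does not help. Indeed, your heuristic that the difference between $s\mapsto s$ and $s\mapsto\sqrt{s}$ is ``a shift absorbed into $\cY(X)$'' is incorrect: the displacement $s-\sqrt{s}$ is unbounded, so neither a closeness argument nor a naive factorisation through the big family is available. You acknowledge this obstacle, but the gesture toward a ``flasqueness-type input'' is not an argument.

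The paper's approach is more direct and supplies precisely the missing idea. One defines $q(t,x):=(\sqrt{1+t},x)$ (the shift by $1$ avoids trouble near $t=0$), checks that $q$ gives a morphism $j:\tilde\cO(X)\to\cO(X)$, and then shows that $q$, viewed as a self-map of either $\tilde\cO(X)$ or $\cO(X)$, is \emph{coarsely homotopic} to the identity in the sense of coarse cylinders $I_{p}(-)$ from \cite[Def.~4.14 \& 4.17]{buen}. For $\tilde\cO(X)$ one writes down an explicit linear interpolation $I_{p}\tilde\cO(X)\to\tilde\cO(X)$ with $p_{+}(t)=t+1-\sqrt{t+1}$ and checks it is proper and controlled. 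For $\cO(X)$ the $\kappa$-condition obstructs a single global homotopy; instead one fixes a generating entourage $V\cap U_{(\kappa,\phi)}$, rescales $p_{+}$ by a slowly growing factor $\sigma$ chosen so that the same interpolation becomes controlled from $\cO(X)_{V\cap U_{(\kappa,\phi)}}$ into $\cO(X)$, and then takes the colimit over all such entourages using $u$-continuity of $\Yo^{s}$. This yields $\Yo^{s}(q)\simeq\id$ on both spaces, hence $\Yo^{s}(i)$ is an equivalence. The coarse-homotopy machinery is the key ingredient your proposal lacks.
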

\begin{proof}
We define a map of sets
$$q \colon [0,\infty)\times X\to[0,\infty)\times X\, , \quad q(t,x) \coloneqq (\sqrt{1+t},x)\, .$$
The map $q$ induces a morphism of bornological coarse spaces
$j \colon \tilde \cO(X)\to \cO(X)$.
Note that the compositions $i\circ j$ and {$j \circ i$} are both given {on the level of sets} by the map $q$.
It suffices to show that the morphisms on $\Yo^{s}(\cO(X))$ or $\Yo^{s}(\tilde \cO(X))$, respectively, induced by 
$q$  are equivalent to the respective identities.

We first consider the case of the modified cone $\tilde \cO(X)$. In this case we shall see that $q$ is coarsely homotopic to the identity (see \cite[Defn.~4.17]{buen}). In order to define the homotopy 
we let the map  $p_{+} \colon \tilde \cO(X)\to [0,\infty)$ be given by $$p_{+}(t,x) \coloneqq t+1-\sqrt{t+1}$$ and set $p \coloneqq (p_{+},0)$. 
Note that $p_{+}$ is bornological and controlled. Then we define the coarse homotopy
\begin{equation}\label{gbetookrepogergrege}
I_{p} \tilde  \cO(X)\to   \tilde\cO(X)\, , \quad (u,t,x)\mapsto \Big(\Big(1-\frac{u}{p_{+}(t)}\Big) t + \frac{u}{p_{+}(t)} \sqrt{1+t},x\Big)
\end{equation}
(see \cite[Defn.~4.14]{buen} for notation of coarse cylinders).
One easily checks that this map is  proper and controlled. 
Since $\Yo^{s}$ is invariant under coarse homotopies (in particular using \cite[Cor.~4.18]{buen}) we conclude that
$$\Yo^{s}(q) \colon \Yo^{s}(\tilde \cO(X))\to \Yo^{s}(\tilde \cO(X))$$ is equivalent to the identity.

The case of the cone $\cO(X)$ is more involved. By Definition \ref{rgfporgergergereg1}
 the hybrid structure on $\cO(X)$ is generated by entourages of the form
$V\cap U_{(\kappa,\phi)}$. 
   We fix the pair $(\kappa,\phi)$ and $V$. We can now choose a  differentiable function $\sigma \colon [0,\infty)\to [0,\infty)$ such that
 $\lim_{t\to \infty} \sigma(t)=\infty$ and $  p_{+} \colon \cO(X)_{V\cap U_{(\kappa,\phi)}}\to [0,\infty)$ given by
 $$p_{+}(t) \coloneqq \sigma(t)( t+1-\sqrt{t+1})$$ is controlled. To this end we must make sure that $(1+t)\sigma^{\prime}(t) $ and
$\sigma \kappa$ are both uniformly bounded.  Note that $  p_{+}$ is also bornological.
We then define the coarse homotopy  
$$I_{p}\cO(X)_{V\cap U_{(\kappa,\phi)}}\to \cO(X)$$ 
between the maps induced by $\id_{[0,\infty)\times X}$ and $q$
by the same formula as in \eqref{gbetookrepogergrege} as above. Indeed one checks that this map is proper and controlled.
Hence we have an equivalence  of morphisms
$$\Yo^{s}(q)\simeq \Yo^{s}(\id) \colon \Yo^{s}(\cO(X)_{V\cap U_{(\kappa,\phi)}})\to \Yo^{s}(\cO(X))\, .$$
We now perform the colimit of these equivalences over the poset of data $(V,(\kappa,\phi))$.
By $u$-continuity we get the desired equivalence of
$$\Yo^{s}(q) \colon \Yo^{s}(\cO(X))\to \Yo^{s}(\cO(X))$$ 
with the identity.
   \end{proof}

   Note that in the definition of the modified cone $\tilde \cO (X)$ we have not fixed the decay rate (encoded in the function $\phi$ in Definition \ref{rgfporgergergereg}.\ref{fiewjioffwefwefwef}) of the entourages in the $X$-direction as $t$ {and $s$} tend to $\infty$. Let us fix a $\cT$-admissible function
$\phi\colon [0,\infty)\to \cT $ which we assume to be monotone and such that
$\phi(0)=X\times X$. Note that here we assume that $\phi$ takes values in $\cT$ (instead of $\cP_{X\times X}$), and therefore $\cT$-admissibility is the same as cofinality.
\begin{ddd}\label{rgfporgergergereg222}
We let $\tilde \cO_{\phi}(X)$  be the bornological coarse space defined as follows:
\begin{enumerate}
\item The underlying set of $\tilde \cO_{\phi}(X)$ is $[0,\infty)\times X$.
\item The bornology of $\tilde \cO_{\phi}(X)$ is generated by the subsets $[0,n]\times B$ for all  $n$ in $\nat$ and bounded subsets $B$ of $X$.
\item \label{fiewjioffwefwefwef11} The coarse structure of $\tilde \cO_{\phi}(X)$
is generated by  entourages of the form
$V\cap U_{\phi}$,  where    $V$  is  a coarse entourage of $[0,\infty)\otimes X$.
\end{enumerate}
\end{ddd}

\begin{ex}
{Let $X$ be a metric space. Recall that its coarse structure is generated by the collection of entourages $W_{r}:=\{(x,y)\in X\times X\:|\: d(x,y)\le r\}$ for all $r > 0$. If we set $\phi(t)=W_{1/t}$, then $\tilde \cO_{\phi}(X)$ is the open cone over $X$ as considered at many places in the coarse geometry literature and usually called the Euclidean cone over $X$.}
\hB
\end{ex}

We have a canonical morphism
\begin{equation}\label{gijgo343f34f3}
k_{\phi} \colon \tilde \cO_{\phi}(X)\to \tilde \cO(X)
\end{equation}
 given by the identity of the underlying sets.
\begin{lem}\label{greij3o4it3434434343t}If $\phi$ is monotone and satisfies $\phi(0)=X\times X$, then
the map \eqref{gijgo343f34f3} induces an equivalence
$$\Yo^{s}(k_{\phi}) \colon \Yo^{s}(\tilde \cO_{\phi}(X))\to\Yo^{s}(\tilde \cO(X))\, .$$
\end{lem}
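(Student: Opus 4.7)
The morphism $k_\phi$ is a well-defined morphism of bornological coarse spaces because $U_\phi$ is, by Definition~\ref{rgfporgergergereg}, a coarse entourage of $\tilde\cO(X)$, so every generating entourage $V \cap U_\phi$ of $\tilde\cO_\phi(X)$ is also an entourage of $\tilde\cO(X)$. My plan is to mimic the proof of Lemma~\ref{ewfiowefwefergergergerg}, combining $u$-continuity of $\Yo^s$ with a $\psi$-indexed time-rescaling and a coarse-homotopic interpolation back to the identity.

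By $u$-continuity of $\Yo^s$,
\[
\Yo^s(\tilde\cO(X)) \simeq \colim_{(V,\psi)} \Yo^s(\tilde\cO(X)_{V \cap U_\psi}), \qquad \Yo^s(\tilde\cO_\phi(X)) \simeq \colim_{V} \Yo^s(\tilde\cO_\phi(X)_{V \cap U_\phi}),
\]
where $\psi$ ranges over the monotone cofinal functions $[0,\infty)\to \cT$ with $\psi(0)=X\times X$ and $V$ ranges over coarse entourages of $[0,\infty)\otimes X$. The bornological coarse spaces $\tilde\cO_\phi(X)_{V \cap U_\phi}$ and $\tilde\cO(X)_{V \cap U_\phi}$ coincide, since both carry the coarse structure generated by the single entourage $V \cap U_\phi$. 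Under these identifications, $\Yo^s(k_\phi)$ is induced by the inclusion of the $\phi$-slice into the $(V,\psi)$-index category.

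For each such $\psi$, use the cofinality of $\phi$ together with $\phi(0)=X\times X$ and monotonicity to construct a monotone, continuous, proper function $\rho_\psi:[0,\infty)\to[0,\infty)$ with $\psi(u)\subseteq \phi(\rho_\psi(u))$ for all $u$; concretely, a smoothing of $u\mapsto \sup\{v\ge 0 : \phi(v)\supseteq \psi(u)\}$ is finite (since $\phi$ is cofinal) and tends to $\infty$ by cofinality of $\psi$. Define $j_\psi:(s,x)\mapsto (\rho_\psi(s),x)$. The estimate
\[
(x,y)\in \psi(\max(s,t)) \subseteq \phi(\rho_\psi(\max(s,t))) = \phi(\max(\rho_\psi(s),\rho_\psi(t)))
\]
(using monotonicity of $\rho_\psi$ and of $\phi$) shows that $j_\psi$ is a controlled morphism $\tilde\cO(X)_{V\cap U_\psi}\to \tilde\cO_\phi(X)_{V'\cap U_\phi}$ for a suitable enlargement $V'$ of $V$ absorbing the variation of $\rho_\psi$.

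The composition $k_\phi \circ j_\psi:\tilde\cO(X)_{V\cap U_\psi}\to \tilde\cO(X)$ sends $(s,x)$ to $(\rho_\psi(s),x)$, and is coarsely homotopic inside $\tilde\cO(X)$ (using the auxiliary cofinal function $\psi\circ \rho_\psi^{-1}$ to accommodate the image) to the canonical structure map via a linear interpolation
\[
H(u,s,x) := \bigl((1 - u/p_+(s))\,s + (u/p_+(s))\,\rho_\psi(s),\ x\bigr),
\]
patterned on \eqref{gbetookrepogergrege}, for a bornological and controlled choice of $p_+$. A parallel interpolation in the opposite direction, organised at the level of the colimit, identifies $\Yo^s(j_\psi\circ k_\phi)$ with the structure map of the $\phi$-slice. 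Passing to the colimits then exhibits $\Yo^s(k_\phi)$ as an equivalence. The main obstacle is the joint choice of the mollification $\rho_\psi$ and of $p_+$: the latter must dominate $|\rho_\psi-\mathrm{id}|$ while remaining bornological and controlled with respect to $V\cap U_\psi$, playing a role analogous to the delicate choice of $\sigma$ in the proof of Lemma~\ref{ewfiowefwefergergergerg}.
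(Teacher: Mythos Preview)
Your approach is essentially the same as the paper's: reduce via $u$-continuity, construct a time-rescaling inverse, and verify both compositions are coarsely homotopic to the identity via a linear interpolation of the type~\eqref{gbetookrepogergrege}. However, you leave the heart of the proof undone and your colimit bookkeeping is more awkward than necessary.

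The paper first makes the reduction sharper: rather than indexing by pairs $(V,\psi)$, it observes that $\Yo^{s}(\tilde\cO(X))\simeq\colim_{\phi'\ge\phi}\Yo^{s}(\tilde\cO_{\phi'}(X))$, so it suffices to show that each $k_{\phi}^{\phi'}:\tilde\cO_{\phi}(X)\to\tilde\cO_{\phi'}(X)$ induces an equivalence. This avoids having to ``organise at the level of the colimit'' the homotopy for $j_\psi\circ k_\phi$, which you leave vague. With the paper's reduction both compositions are endomorphisms of a single space, and the same explicit homotopy handles both.

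The genuine gap is the construction of your $\rho_\psi$ (the paper's $\sigma$). You say a smoothing of $u\mapsto\sup\{v:\phi(v)\supseteq\psi(u)\}$ will do, but smoothing gives continuity, not control: for $j_\psi$ to be a morphism of bornological coarse spaces you need the image of $V\cap U_\psi$ to land in some $V'\cap U_\phi$, and the $[0,\infty)$-component of this forces $|\rho_\psi(s)-\rho_\psi(t)|$ to be uniformly bounded when $|s-t|$ is---i.e.\ $\rho_\psi$ must be Lipschitz (controlled), not merely continuous and proper. The paper supplies this explicitly: starting from $\delta(s):=\sup\{t:\phi'(s)\subseteq\phi(t)\}$ it builds a $1$-Lipschitz $\sigma$ by solving $t=\int_{0}^{\sigma(t)}h(s)\,ds$ for a carefully chosen integrand $h\ge 1$ with $\int_{0}^{\delta(t)}h\ge t$, which simultaneously guarantees $\sigma\le\delta$ (hence $\phi'(t)\subseteq\phi(\sigma(t))$), $\sigma\to\infty$, and Lipschitz constant $\le 1$. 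This is precisely the ``delicate choice'' you flag but do not carry out; without it the map $j_\psi$ is not known to be a morphism and the argument does not go through.
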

\begin{proof}
If $\phi^{\prime}$ is a second monotone  function as in \ref{rgfporgergergereg}.\ref{fiewjioffwefwefwef} such that  $\phi(t)\subseteq \phi^{\prime}(t)$ for all $t$ in $[0,\infty)$, then $U_{\phi}\subseteq U_{\phi^{\prime}}$. Therefore the identity of the underlying maps induces a morphism
$$k_{\phi}^{\phi^{\prime}} \colon \tilde \cO_{\phi}(X)\to \tilde \cO_{\phi^{\prime}}(X)\, .$$
By $u$-continuity we have an equivalence 
$$\Yo^{s}(\tilde\cO(X))\simeq \colim_{\phi^{\prime}\ge \phi} \Yo^{s}(\tilde \cO_{\phi^{\prime}}(X))\, .$$
It therefore suffices to show that
$$\Yo^{s}(k_{\phi}^{\phi^{\prime}}) \colon \Yo^{s}(\tilde \cO_{\phi}(X))\to \Yo^{s}(\tilde \cO_{\phi^{\prime}}(X))$$
is an equivalence for all pairs $\phi,\phi^{\prime}$  such that  $\phi(t)\subseteq \phi^{\prime}(t)$ for all $t$ in $[0,\infty)$.

{We will show now that there exists} a controlled function  $\sigma \colon [0,\infty)\to [0,\infty)$ such that $\phi^{\prime}(t)\subseteq \phi(\sigma(t))$ and $\lim_{t\to \infty} \sigma(t)=\infty$. 
To this end set $$\delta:[0,\infty)\to [0,\infty)\, , \quad \delta(s) \coloneqq \sup\{t\in [0,\infty) \:|\:   \phi^{\prime}(s)\subseteq \phi(t)\}\, .$$
This function is monotonically increasing and satisfies $\lim_{s\to \infty } \delta(s)=\infty$.
The idea is now  to define $\sigma$ to be $\delta$. But to ensure that $\sigma$ is controlled, we have to modify this idea slightly. We choose $t_{0}\in [0,\infty)$ such that $\delta(t_{0})\ge 2$. We can find $\sigma(t)$ for all $t$ in $ [t_{0},\infty)$ by solving the  equation
$$t=\int_{0}^{\sigma(t)} h(s) ds\, ,$$
where $h$ is a function with $h\ge 1$ and
$$t\le \int_{0}^{\delta(t)} h({s})ds\, .$$
{More concretely,} we can take
$$h(t) \coloneqq \max\big\{1, \sup_{s\in [1,t]} \delta^{-1}(2s)\big\}  \, ,$$
where we set $\delta^{-1}(u) \coloneqq \sup\{r\in [0,\infty) \:|\: \delta(r)\le u\}$.
Note  that if $ t\in [t_{0},\infty)$, then    the interval $[\delta(t)/2,\delta(t)]$ in the domain of integration yields the estimate
$$ \int_{0}^{\delta(t)} h({s})ds\ge \frac{\delta(t)}{2}  \delta^{-1}(2\delta(t)/2) \ge t\, .$$
 For $t\in [0,t_{0}]$ we set   $\sigma(t)=0$.
The Lipschitz constant of $\sigma$ on $[t_{0},\infty)$ is bounded by $1$.
It follows that $\sigma$ is controlled.

We consider the map of sets $$q \colon [0,\infty)\times X\to [0,\infty)\times X\, , \quad  q(t,x) \coloneqq (\sigma(t),x)\, .$$
By construction it induces a morphism
$$j \colon \tilde \cO_{\phi^{\prime}}(X)\to \tilde \cO_{\phi}(X)\, .$$
We now note that the compositions
$$j\circ k_{\phi}^{\phi^{\prime}} \colon  \tilde \cO_{\phi}(X)\to  \tilde \cO_{\phi}(X)\, , \quad k_{\phi}^{\phi^{\prime}}\circ j \colon 
\tilde \cO_{\phi^{\prime}}(X)\to \tilde \cO_{\phi^{\prime}}(X)$$ are both induced by 
$q$. 

It suffices to show that these morphisms are both coarsely homotopic to the identity.

We set $p:=(\sigma+1,0)$ and observe that the map
$$I_{p}\tilde \cO_{\phi}(X) \to \tilde \cO_{\phi}(X)\, , \quad (u,t,x)\mapsto \Big(\Big(1-\frac{u}{\sigma(t)+1}\Big)t+\frac{u}{\sigma(t)+1}\sigma(t),x\Big)$$
{is a suitable homotopy (i.e., proper and controlled) that does the job.}
The same construction also works in the case of $\phi^{\prime}$.
\end{proof}

\begin{rem}
The cone $\tilde \cO(X)$ has a big family $\cY(X):=([0,n]\otimes X)_{n\in \nat}$ and we can define a modified version of the germs at infinity 
$$\tilde \cO^{\infty}(X):=\Cofib\big( \Yo^{s}( \cY(X))\to  \Yo^{s}(\tilde \cO(X))\big) \, .$$
Similarly we can define $$\tilde \cO_{\phi}^{\infty}(X):=\Cofib\big( \Yo^{s}( \cY(X))\to  \Yo^{s}(\tilde \cO_{\phi}(X))\big) \, .$$
 The inclusion $X\to [0,n]\times X $  is a coarse equivalence for every $n$ in $\nat$ and the structure induced by
  $\tilde \cO(X)$ or $\tilde \cO_{\phi}(X)$, {respectively.} In the latter case this is granted by the condition that $\phi(0)=X\times X$.
  Therefore we get fibre sequences 
  $$\Yo^{s}(F_{\cU}(X))\to \Yo^{s}(\tilde \cO(X))\to\tilde \cO^{\infty}(X)\to \Sigma \Yo^{s}(F_{\cU}(X))\,,$$ and 
  $$\Yo^{s}(F_{\cU}(X))\to \Yo^{s}(\tilde \cO_{\phi}(X))\to\tilde \cO_{\phi}^{\infty}(X)\to \Sigma \Yo^{s}(F_{\cU}(X))\, ,$$
   respectively.
By a comparison with the cone sequence \eqref{23ru90rr} and by Lemmas \ref{ewfiowefwefergergergerg} and \ref{greij3o4it3434434343t} we get induced equivalences
$$\cO^{\infty}(X)\simeq \tilde \cO^{\infty}(X)\simeq \tilde \cO_{\phi}^{\infty}(X)\, .$$
So we could have defined the germs at infinity of the cone using a modified version of the cone. But since the modified cones do not come from a hybrid structure construction we can not apply the general theorems (Homotopy Theorem and Decomposition Theorem) for hybrid spaces shown in \cite[Sec.~5.2 \& 5.3]{buen} in order to deduce the properties of this functor, see e.g.\ Lemma~\ref{foprgfregegr} below. For this reason we prefer to work with $\cO(X)$ instead of $\tilde \cO(X)$ or $\tilde \cO_\phi(X)$.
\hB
\end{rem}

\begin{ex}
Let $X$ be a geodesic, locally compact hyperbolic metric space. One can construct a nice compactification of $X$ by attaching the Gromov boundary $\partial X$. Note that $\partial X$ is a compact metric space. Higson--Roe \cite{hr} showed that $X$ is coarsely homotopy equivalent to the Euclidean cone $\tilde \cO_{\phi}(\partial X)$ over its Gromov boundary $\partial X$. Together with the results of the present section we therefore get the equivalence
 \begin{equation}\label{foijoi3f3f}
\Yo^{s}(X) \simeq \Yo^{s}(\cO(\partial X))\, .
\end{equation}

Fukaya--Oguni \cite{fukaya_oguni} generalized the result of Higson--Roe to all proper coarsely convex spaces (examples are hyperbolic spaces, $\mathrm{CAT}(0)$ spaces and systolic complexes). Especially, we have the equivalence \eqref{foijoi3f3f} where $\partial X$ is a suitable version of Gromov's boundary.
\hB
\end{ex}

\section{The coarse assembly map}

In this section we define the coarse assembly map. 

Taking the functoriality of {the} cone sequence  \eqref{23ru90rr} into account and using the notation  \eqref{wergergergergergweferferfw}  we get 
 a fibre sequence of functors from $\UBC$ to $\Sp\cX$
\begin{equation}\label{gerpogikrepoergerg}
\bF \to \Yo^{s}\circ \cO\to \cO^{\infty} \xrightarrow{\partial} \Sigma \bF
\end{equation} 
which we call the cone sequence.  

\begin{rem}
The  cone boundary map $\partial$ in the cone sequence \eqref{gerpogikrepoergerg} has a very nice interpretation as a forget-control map.  For $X$ in $\UBC$ the identity of the underlying sets induces a map
\[
\partial_{\geom}\colon \cO^{\infty}_{\geom}(X)\to \R\otimes \cF_{\cU}(X)\, ,
\]
see \eqref{ewfoijfoiqjfoiqefe} for the domain.
The difference between the domain and the target is that the domain has a smaller coarse structure.
By   \cite[Prop.~9.31]{equicoarse} the induced map 
\[
\Yo^{s}(\partial_{\geom})\colon \Yo^{s}(\cO^{\infty}_{\geom}(X))\to \Yo^{s}(\R\otimes \cF_{\cU}(X))\simeq \Sigma \bF(X)
\]
is equivalent to the cone boundary.
\hB
\end{rem}

\begin{lem}\label{foprgfregegr}
The functors $\Yo^{s}\circ\cO, \cO^{\infty} \colon \UBC\to \Sp\cX$ satisfy excision for uniformly and coarsely excisive decompositions, and they are homotopy invariant.
\end{lem}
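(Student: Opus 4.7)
The plan is to reduce both assertions to the Homotopy Theorem and the Decomposition Theorem for hybrid spaces proved in \cite[Sec.~5.2 \& 5.3]{buen}, and then to transfer the properties to $\cO^{\infty}$ via the cone fiber sequence \eqref{gerpogikrepoergerg}.

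I would first treat $\Yo^{s}\circ \cO$ directly. By construction $\cO(X)$ is the hybrid space associated to the bornological coarse space $F_{\cU}([0,\infty)\otimes X)$, the big family $\cY(X) = ([0,n]\times X)_{n\in \nat}$ and the product uniform structure on $[0,\infty)\otimes X$. Homotopy invariance follows from the Homotopy Theorem for hybrid spaces: a homotopy $h:[0,1]\otimes X\to Y$ in $\UBC$ induces $\cO(h):\cO([0,1]\otimes X)\to \cO(Y)$, and the theorem precisely certifies that $\Yo^{s}(\cO(h\circ i_{0}))\simeq \Yo^{s}(\cO(h\circ i_{1}))$, where $i_{0},i_{1}$ are the two endpoint inclusions. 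Applied to the identity homotopy of the projection $[0,1]\otimes X\to X$, together with the zero-section, this yields the desired equivalence $\Yo^{s}(\cO([0,1]\otimes X))\simeq \Yo^{s}(\cO(X))$.

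For excision of $\Yo^{s}\circ \cO$, let $(A,B)$ be a uniformly and coarsely excisive closed decomposition of $X$. Then $([0,\infty)\times A,[0,\infty)\times B)$ is a uniformly and coarsely excisive closed decomposition of $[0,\infty)\otimes X$, with intersection $[0,\infty)\times (A\cap B)$. I would invoke the Decomposition Theorem for hybrid spaces applied to the hybrid coarse structure defining $\cO(X)$: the uniform excisiveness function $\kappa$ witnessing the excisiveness of $(A,B)$ provides exactly the control needed so that thickenings by hybrid entourages of the form $V\cap U_{(\kappa',\phi)}$ upgrade $(\cO(A),\cO(B))$ to a complementary pair in $\cO(X)$ in the sense used by the excision axiom for $\Yo^{s}$ on $\BC$. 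The required cocartesian square then follows from excision of the universal coarse homology theory $\Yo^{s}$.

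The assertions for $\cO^{\infty}$ are then formal consequences. By Lemma~\ref{goigorgregregregre} the functor $\bF$ is a local homology theory, hence in particular homotopy invariant and excisive for uniformly and coarsely excisive closed decompositions. Since $\cO^{\infty}$ is defined as the cofiber of the natural transformation $\bF\to \Yo^{s}\circ \cO$ in the stable $\infty$-category $\Sp\cX$, and since cofibers of morphisms of equivalences are equivalences and cofibers of morphisms of cocartesian squares are cocartesian squares, both homotopy invariance and excision transfer pointwise from the pair $(\bF,\Yo^{s}\circ \cO)$ to $\cO^{\infty}$. The main obstacle is the excision step for $\Yo^{s}\circ \cO$: one must carefully match the uniform excisiveness data $\kappa$ with the decay function $\phi$ and the auxiliary function $\kappa'$ controlling the $[0,\infty)$-direction of the hybrid entourages, in order to verify the complementary-pair condition. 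This is precisely the situation the Decomposition Theorem for hybrid spaces is designed to address.
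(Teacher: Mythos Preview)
Your proposal is correct and follows precisely the strategy the paper itself points to. The paper's own proof is merely a citation to \cite[Sec.~9.4 \& 9.5]{equicoarse}, but the remark immediately preceding Lemma~\ref{foprgfregegr} (at the end of Section~\ref{blijeobereggreerg}) makes explicit that the intended argument is exactly the one you outline: apply the Homotopy Theorem and the Decomposition Theorem for hybrid spaces from \cite[Sec.~5.2 \& 5.3]{buen} to $\cO(X)$, and then pass to $\cO^{\infty}$ via the cone fibre sequence \eqref{gerpogikrepoergerg} together with the fact (Lemma~\ref{goigorgregregregre}) that $\bF$ is already a local homology theory.

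One small point of precision: for homotopy invariance you should verify the identification $\cO([0,1]\otimes X)\cong ([0,1]\otimes ([0,\infty)\otimes X))_{h}$, where the right-hand side carries the hybrid structure for the big family $([0,1]\times [0,n]\times X)_{n\in\nat}$ and the product uniform structure; this is what puts you in the setting of the Homotopy Theorem. Likewise, for excision you need that the subspace $[0,\infty)\times A\subseteq \cO(X)$ with the induced structures agrees with $\cO(A)$, which holds because the big family, uniform structure and bornology all restrict correctly. With these identifications in hand, your invocation of the two theorems from \cite{buen} and the cofiber argument for $\cO^{\infty}$ is exactly right.
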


\begin{proof}
This is shown in \cite[Sec.~9.4 \& 9.5]{equicoarse}. 
\end{proof}

\begin{rem}
Since we consider excision for decompositions which are uniform and coarse at the same time it is not necessary to assume that our uniform spaces are Hausdorff, see \cite[Rem.~9.28]{equicoarse}.

Note further that we do not have to add adjectives like \textit{open} or \textit{closed} in the assumptions of Lemma \ref{foprgfregegr}.
\hB
\end{rem}

By Lemma~\ref{goigorgregregregre} {the} functor $\bF$ vanishes on flasque spaces, but we do not expect that $\cO$ vanishes on {them}. Assume that $X$ is a flasque \ubs with flasqueness witnessed by the self-map $f$. Then in general $\cO(f)$ is not close to the identity, nevertheless $\Yo^{s}(\cO(f))$ is equivalent to $\Yo^{s}(\cO(\id_{X}))$ \cite[Cor.~5.31]{buen}. In fact, the map $\cO(f)$ exhibits the cone $\cO(X)$ as a weakly flasque bornological coarse space in the sense of \cite[Def.~4.18]{equicoarse}, see \cite[Proof of Prop.~11.22]{equicoarse}.

\begin{ddd}[{\cite[Def.~4.19]{equicoarse}}]
\label{defnk34rte}
A coarse homology theory is called strong if it vanishes on weakly flasque bornological coarse spaces.
\end{ddd}

\begin{ex}\label{wrgfiowrgegergegergerg111}
Here is a list of examples of coarse homology theories which are strong: 
\begin{enumerate}
\item Coarse ordinary homology  \cite{equicoarse}.
\item Coarse algebraic $K$-homology with coefficients in an additive category   \cite{equicoarse}.
\item Coarse Waldhausen $K$-homology of spaces    with coefficients in a space \cite{Bunke:aa}.
\item Coarse algebraic $K$-homology with coefficients in a left-exact $\infty$-category  \cite{unik}.
\item Coarse topological $K$-homology with coefficients in a $C^{*}$-category   {\cite{bu}}.
\end{enumerate}
 In these references we actually consider the equivariant case for a group $G$. For the present application we just need the case of the trivial group $G=\{1\}$. 
\hB
\end{ex}


Let $\bC$ be a cocomplete stable $\infty$-category, and let $E \colon \BC\to \bC$ be a  coarse homology theory.  We consider the compositions $$E\cO^{\infty} \colon  \UBC\xrightarrow{\cO^{\infty}} \Sp\cX\xrightarrow{E} \bC\, , \quad E\cO \colon    \UBC\xrightarrow{\cO} \BC\xrightarrow{E} \bC\, ,$$
where for $E\cO^{\infty}$ we interpret $E$ as a colimit preserving functor
$E \colon \Sp\cX\to \bC$, see \cite[Cor.~4.24]{buen}.
\begin{lem}\label{fewoiiofwefewf}
If $E$ is strong, then the functors
$$E\cO^{\infty} ,   E\cO  \colon \UBC\to \bC$$
are closed and open local homology theories.
\end{lem}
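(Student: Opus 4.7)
The plan is to verify the four axioms of Definition~\ref{ioojwfefewfwfw} for both $E\cO$ and $E\cO^{\infty}$, treating $E\cO$ first and then bootstrapping to $E\cO^{\infty}$ via the cone sequence \eqref{gerpogikrepoergerg}.

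Excision and homotopy invariance are immediate. By Lemma~\ref{foprgfregegr}, the functors $\Yo^{s}\circ\cO$ and $\cO^{\infty}$ from $\UBC$ to $\Sp\cX$ already satisfy excision for uniformly and coarsely excisive closed decompositions and are homotopy invariant. Since a coarse homology theory $E$ is equivalently a colimit-preserving functor $E:\Sp\cX\to\bC$, it preserves the cocartesian squares witnessing excision and the equivalences induced by the projection $[0,1]\otimes X\to X$, so excision and homotopy invariance descend to both $E\cO$ and $E\cO^{\infty}$.

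The main step, and the only one using strongness, is vanishing on flasques. Suppose $X$ is a flasque \ubs with flasqueness implemented by $f:X\to X$. As recalled in the paragraph just before the statement (following \cite[Proof of Prop.~11.20]{equicoarse}), the self-map $\cO(f)$ need not be close to the identity, but it exhibits $\cO(X)$ as a weakly flasque bornological coarse space in the sense of Definition~\ref{defnk34rte}. Because $E$ is strong we conclude $E\cO(X)\simeq 0$. For $\cO^{\infty}$ we apply $E$ to the cone sequence \eqref{gerpogikrepoergerg}, obtaining a fibre sequence
\[
E\bF(X)\to E\cO(X)\to E\cO^{\infty}(X)\to \Sigma E\bF(X)\ .
\]
By Lemma~\ref{goigorgregregregre} the functor $\bF$ is itself a local homology theory, so $\bF(X)\simeq 0$ and hence $E\bF(X)\simeq 0$. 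Combined with $E\cO(X)\simeq 0$ this forces $E\cO^{\infty}(X)\simeq 0$, as required.

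For $u$-continuity of $E\cO$, unfolding Definition~\ref{rgfporgergergereg1}.\ref{fiewjioffwefwefwef2} shows that the coarse structure on $\cO(X)$ is generated by entourages of the form $V\cap U_{(\kappa,\phi)}$, where $V$ is a coarse entourage of $[0,\infty)\otimes X$ and can therefore be taken to originate from a coarse-and-uniform entourage $V'$ of $X$; a straightforward cofinality argument identifies the canonical map $\colim_{V'\in\tilde\cC}\Yo^{s}(\cO(X_{V'}))\to \Yo^{s}(\cO(X))$ with the one arising from $u$-continuity of $\Yo^{s}$ applied to $\cO(X)$, which is an equivalence. Applying $E$ and using that $E$ is colimit-preserving yields $u$-continuity of $E\cO$. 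For $E\cO^{\infty}$ we invoke the cone sequence once more, using $u$-continuity of $\bF$ (Lemma~\ref{goigorgregregregre}) together with exactness of filtered colimits in the stable $\infty$-category $\bC$ to conclude. The only genuine obstacle in the argument is the flasqueness step, which is exactly where strongness of $E$ is needed.
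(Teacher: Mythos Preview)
Your proof is correct and, for excision, homotopy invariance, and vanishing on flasques, identical to the paper's. The one genuine difference is in the $u$-continuity step, where you and the paper go in opposite directions through the cone fibre sequence. The paper first observes that $\cO^{\infty}$ is invariant under coarsenings (citing \cite[Prop.~9.31]{equicoarse}), so $\cO^{\infty}(X_{V})\to\cO^{\infty}(X)$ is an equivalence for all sufficiently large $V$, which immediately gives $u$-continuity of $E\cO^{\infty}$; it then deduces $u$-continuity of $E\cO$ from the fibre sequence together with $u$-continuity of $E\circ F_{\cU}$. You instead argue $u$-continuity of $E\cO$ directly by unfolding the hybrid coarse structure and running a double cofinality argument through $u$-continuity of $\Yo^{s}$, and then pass to $E\cO^{\infty}$ via the fibre sequence. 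Your route works (the key point being that the coarse structures of the $\cO(X_{V'})$ exhaust that of $\cO(X)$, and that $\tilde\cC$ is cofinal in $\cC$ since uniform structures are filters), but the phrase ``identifies the canonical map \ldots\ with the one arising from $u$-continuity of $\Yo^{s}$ applied to $\cO(X)$'' undersells what is actually a two-step cofinality reduction; the paper's approach is cleaner because it invokes a single black-box result about $\cO^{\infty}$ rather than unpacking the hybrid structure by hand.
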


\begin{proof}
For a \ubs $X$ we have a natural fibre sequence
\begin{equation}
\label{eq2ewrr}
E(F_{\cU}(X))\to E\cO(X)\to E\cO^{\infty}(X) {\xrightarrow{E(\partial)}} \Sigma E(F_{\cU}(X))\, .
\end{equation}
By Lemma \ref{foprgfregegr} 
both functors $E  \cO$ and $E\cO^{\infty}$ are homotopy invariant and satisfy excision.
It remains to show that $   E  \cO$ and $E\cO^{\infty}$ are $u$-continuous and vanish on flasques.  

By Lemma~\ref{fiowefewfewfwe} the functor $E \circ F_{\cU}$ is a closed  and open local homology theory. In particular it is $u$-continuous and vanishes on flasques.

The functor $\cO^{\infty}$ is invariant under  coarsenings (\cite[Prop.~9.33]{equicoarse} and Definition~\ref{gfiorjgoergre345}) which implies the equivalence
 $\cO^{\infty}(X_{U})\stackrel{\simeq}{\to} \cO^{\infty}(X)$
 for sufficiently large entourages $U$ of $X$. In particular, the functor $E\cO^{\infty}$ is $u$-continuous. It follows from the fibre sequence \eqref{eq2ewrr} that $E\cO$ is $u$-continuous.

If $X$ is flasque, then   $\cO(X)$ is weakly flasque.
Since $E(F_{\cU}(X))\simeq 0$ and also $E\cO(X)\simeq 0$ due to strongness of $E$, we conclude, using the fibre sequence    \eqref{eq2ewrr},  that $E\cO^{\infty}(X)\simeq 0$.
\end{proof}

Let $E$ be a strong coarse homology theory.
We first post-compose the  cone sequence \eqref{gerpogikrepoergerg} with $E$.
In view of the Lemmas  \ref{fewoiiofwefewf} and \ref{fiowefewfewfwe} we get a fibre sequence of closed and open local homology theories 
\begin{equation*}
E\bF\to E\cO\to E\cO^{\infty} \xrightarrow{E(\partial)} \Sigma E\bF \, .
\end{equation*}
We now pre-compose this fibre sequence with the  the universal coarsification functor $\bP$ from Definition \ref{ergioerggrg546546}
and get a fibre sequence of coarse homology theories 
 \begin{equation}\label{fwefqwefwefqwfqwefqwefqwefqewff}
E\bF\bP\to E\cO\bP\to E\cO^{\infty}\bP\xrightarrow{E(\partial) \bP} \Sigma E\bF\bP\, .
\end{equation}

Let $E$ be a strong coarse homology theory.

\begin{ddd}\label{fiwjfofewfewfwefefweffw}
The coarse assembly map is the  natural transformation between  coarse homology theories
$$\mu_{E}  \colon  E\cO^{\infty}\bP\to \Sigma E$$
defined as the composition of $E(\partial)  \bP$ with  the identification $E\bF\bP \simeq E$ from \eqref{eq345ztrfd}.
\end{ddd}

\begin{rem}
Note that the universal coarsification functor $\bP$ takes values in $\Sp\cB$. Therefore, in order to ensure that the domain of the coarse assembly map is well-defined, we need that $E  \cO^{\infty}$ is a closed local homology theory, interpreted here as a colimit preserving functor $\Sp\cB\to \bC$, Corollary~\ref{giugoeggergg}.   For this reason we have to assume that $E$ is strong, because this is an assumption in  Lemma \ref{fewoiiofwefewf}.
\hB
\end{rem}

\begin{rem}
It is easy to see by inspecting the constructions  that the coarse assembly map is natural in the strong coarse homology theory $E$. If $E\to E'$ is a natural transformation between strong coarse homology theories, then
\[
\xymatrix{E\cO^{\infty}\bP\ar[r]^-{\mu_{E}}\ar[d]&\Sigma E\ar[d]\\E'\cO^{\infty}\bP\ar[r]^-{\mu_{E'}}&\Sigma E'}
\]
is a natural commuting square.
\hB
\end{rem}

\begin{rem}
By Lemma \ref{fewoiiofwefewf} we know that   $E\cO^{\infty}$ is a  closed and open local homology theory.
In view of Lemma \ref{vweoijiorjovbwvdfsv} we can replace $\bP$ by the open variant $\bP^{o}$ without changing the assembly map.
\hB
\end{rem}

\begin{rem}\label{fwwoiuweofewfewfw}
It follows from the above fibre sequence \eqref{gerpogikrepoergerg} that for a bornological coarse space~$X$ the coarse assembly map \begin{equation}\label{oioiejogiergerergreg}
\mu_{E,X} \colon  E\cO^{\infty}\bP(X)\to \Sigma E(X)
\end{equation}
is an equivalence if and only if
$E\cO \bP(X)\simeq 0$. Therefore we have identified
$E\cO\bP$ as the  coarse homology theory which detects the obstructions to $\mu_{E}$ being an  equivalence.
\hB
\end{rem}

\begin{rem}
At the moment the local homology theory $E\cO^{\infty}$ appearing in the domain of the coarse assembly map might appear mysterious. In Proposition~\ref{fifowefweewfwef} we calculate the   evaluation of this homology theory on finite-dimensional, locally finite simplicial complexes under the assumption that $E$ is   additive. 
%
\hB
\end{rem}

\begin{rem}
In the case of coarse $K$-homology there is the analytic coarse assembly map  \cite{hr}. 
It is only defined for $X$ in $\UBC$ presented by a proper metric space. 
It is a homomorphism from the locally finite $K$-homology groups
of $X$ to its coarse $K$-homology groups.  It is a non-trivial matter to compare our proposed version of  the assembly map with the one in   \cite{hr}. We will discuss this problem in
{\cite{compass}.}\footnote{This comparison is also considered in {Section~16 of the arXiv-preprint version~v2} of the present paper.}   

The  analytic coarse assembly map is closely related with index theory. In contrast, the coarse assembly map introduced
in Definition   \ref{fiwjfofewfewfwefefweffw} is of geometric and homotopy theoretic nature.
In contrast to the analytic coarse assembly map   it is  a natural transformation between coarse homology theories. This fact allows to   apply the comparison theorems  shown in \cite{buen}, see e.g.\ Theorem \ref{woifowfwewfewf}.
\hB
\end{rem}

\section{Isomorphism results}\label{ergop34t34t34t34}

In this section we  discuss conditions which imply that the coarse assembly map $\mu_{E,X}$ in \eqref{oioiejogiergerergreg} is an equivalence. We will discuss the cases of finite asymptotic dimension, finite decomposition complexity, and scaleable spaces. Our goal is to show that in many cases the  reasons for the  validity of the coarse Baum--Connes or Farrell--Jones conjectures for $X$ in fact imply in greater generality that the coarse assembly map $\mu_{E,X}$ is an equivalence for suitable coarse homology theories $E$.
 
Note that the coarse assembly map $\mu_{E} \colon E\cO^{\infty}\bP\to  \Sigma E$ (Definition \ref{fiwjfofewfewfwefefweffw}) is a morphism between coarse homology theories. So it is clear from the outset that the property of $\mu_{E,X} $ of  being an equivalence only depends on the coarse motivic spectrum $\Yo^{s}(X)$.

\subsection{{Finite asymptotic dimension}}

Let $X$ be a bornological coarse space with bornology $\cB$ and coarse structure $\cC$, see Section~\ref{eoijiqoegergegwegwerg}. Recall that $X$ is called discrete as a coarse space if 
$\cC$   is the minimal coarse structure $ \cC_{min}$ consisting of all subsets of the diagonal.

Let $X$ be a bornological coarse space, and let $E$ be a strong coarse homology theory.

\begin{prop}\label{prop_assembly_equiv_discrete}
If $X$ is discrete as a coarse space, then
the coarse assembly map $\mu_{E,X}$ is an equivalence.
\end{prop}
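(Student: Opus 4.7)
The plan is to reduce the claim, via the cone fibre sequence, to the vanishing $E\cO(X_{\mathrm{disc}}) \simeq 0$ for a canonical uniform bornological coarse refinement $X_{\mathrm{disc}}$ of $X$, and then to prove this vanishing by exhibiting $\cO(X_{\mathrm{disc}})$ as weakly flasque.

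First I would compute $\bP(X)$. Since every coarse entourage of $X$ is contained in $\diag_X$, every Rips complex $P_U(X)$ collapses to its $0$-skeleton, and the induced uniform bornological coarse structure is that of the uniform bornological coarse space $X_{\mathrm{disc}}$ whose underlying set is $X$, whose bornology is the original bornology, and whose uniform and coarse structures are both discrete. The category of coarse entourages of $X$ is filtered and has $\diag_X$ as a final object, so by the pointwise formula \eqref{oirjeorgergregr} the colimit defining $\bP(X)$ is constant up to equivalence and yields $\bP(X) \simeq \Yo\cB^{s}(X_{\mathrm{disc}})$. Note that $F_{\cU}(X_{\mathrm{disc}}) = X$ in $\BC$, in agreement with the identification $\bF \bP \simeq \id$ of Proposition \ref{frioorfwefewfefwefew}. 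Evaluating the cone fibre sequence \eqref{gerpogikrepoergerg} on $X_{\mathrm{disc}}$ and applying $E$ then produces the fibre sequence
\[
E(X) \longrightarrow E\cO(X_{\mathrm{disc}}) \longrightarrow E\cO^{\infty}\bP(X) \xrightarrow{\mu_{E,X}} \Sigma E(X),
\]
so that $\mu_{E,X}$ is an equivalence if and only if $E\cO(X_{\mathrm{disc}}) \simeq 0$.

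The main work lies in establishing weak flasqueness of $\cO(X_{\mathrm{disc}})$, at which point strongness of $E$ (Definition \ref{defnk34rte}) forces the required vanishing. Since $X_{\mathrm{disc}}$ is coarsely discrete, every coarse entourage of $\cO(X_{\mathrm{disc}})$ forces the $X$-coordinate of its pairs to be fixed, and the coarse structure reduces to conditions of the form $|s - t| \leq \kappa(\max\{s, t\})$ with $\kappa \to 0$. My proposal is to take the shift $f(t, x) := (t + 1, x)$ as the witnessing self-map: it is manifestly proper and controlled, and its iterates push every bounded subset off to infinity. Although $f$ is not close to $\id$ (the $\kappa$ in Definition \ref{rgfporgergergereg1} precludes a uniform $1$-separation), it should be coarsely homotopic to $\id$ via a linear homotopy $h(u,t,x) := (t + u/(t+1), x)$ on a coarse cylinder with height function $p(t, x) := t + 1$, analogous to the construction in the proof of Lemma \ref{ewfiowefwefergergergerg}; the denominator $t+1$ is exactly what compensates for the decay imposed by $\kappa$. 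Verifying the control estimates for this homotopy and confirming it meets the definition of weak flasqueness in \cite[Def.~4.17]{equicoarse} is the technical heart of the argument.
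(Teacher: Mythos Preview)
Your reduction is correct: $\bP(X)\simeq\Yo\cB^{s}(X_{\mathrm{disc}})$, and the cone sequence reduces the claim to $E\cO(X_{\mathrm{disc}})\simeq 0$.

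The weak-flasqueness approach, however, does not go through. The shift $f(t,x)=(t+1,x)$ is a morphism of $\cO(X_{\mathrm{disc}})$, and your coarse homotopy may well establish $\Yo^{s}(f)\simeq\id$, but weak flasqueness in \cite[Def.~4.17]{equicoarse} also requires that $\bigcup_{n}(f^{n}\times f^{n})(U)$ be an entourage of $\cO(X_{\mathrm{disc}})$ for every entourage $U$, and this fails. Choose a generating entourage $U=V\cap U_{(\kappa,\phi)}$ with, say, $\kappa(u)=1/(u+1)$; then $((0,x),(\tfrac{1}{2},x))\in U$, hence $((n,x),(n+\tfrac{1}{2},x))\in(f^{n}\times f^{n})(U)$ for every $n$. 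The union therefore contains pairs at fixed separation $\tfrac{1}{2}$ at arbitrarily large height, and no entourage of the hybrid structure can contain such pairs, since every admissible $\kappa'$ in Definition~\ref{rgfporgergergereg1} tends to $0$. The same obstruction defeats any self-map that pushes points off to infinity: the hybrid entourages shrink as $t\to\infty$, which is incompatible with the union condition needed for the Eilenberg swindle. The cone $\cO(X_{\mathrm{disc}})$ is simply not weakly flasque.

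The paper proceeds differently: it invokes \cite[Prop.~9.35]{equicoarse}, which establishes the motivic equivalence $\cO^{\infty}(X_{\mathrm{disc}})\simeq\Sigma\,\Yo^{s}(F_{\cU}(X_{\mathrm{disc}}))$ directly (equivalently $\Yo^{s}(\cO(X_{\mathrm{disc}}))\simeq 0$). That argument uses the decomposition and homotopy theorems for hybrid structures, not any flasqueness of the cone itself.
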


\begin{proof}
Since $X$ is discrete as a coarse space we have $X\cong P_{\diag}(X)$
as uniform bornological coarse spaces  if we equip $X$ with the discrete uniform structure.
By  \cite[Prop.~9.35]{equicoarse} the boundary map of the cone sequence \eqref{gerpogikrepoergerg} induces an equivalence
\[  \cO^{\infty}(X)\stackrel{\simeq}{\to} \Sigma \Yo^{s}(F_{\cU}(X))\, .\qedhere\]
\end{proof}

Let $\Sp\cX\langle \mathrm{disc}\rangle$ denote the cocomplete stable full subcategory of $\Sp\cX$  generated by the motives of all discrete bornological coarse spaces. Let $E$ be a strong coarse homology theory.
Proposition~\ref{prop_assembly_equiv_discrete} has the following immediate consequence.

\begin{kor}\label{fweoiowefewew}
The coarse assembly map $\mu_{X,E}$ is an equivalence for all bornological coarse spaces $X$ such that $\Yo^{s}(X) \in \Sp\cX\langle \mathrm{disc}\rangle$.
\end{kor}

Let $X$ be a  coarse space with coarse structure $\cC$.

\begin{ddd}\label{wetoijovfsdvsdfvsfdv}
$X$ has weakly finite asymptotic dimension if there exists a cofinal set of entourages $U$  in $\cC$ such that $X_{U}$ has finite asymptotic dimension.
\end{ddd}

Let $X$ be a bornological coarse space. We apply Definition \ref{wetoijovfsdvsdfvsfdv} to its underlying coarse space.
Let $E$ be a strong coarse homology theory.
\begin{theorem}\label{woifowfwewfewf}
 If $X$ has weakly finite asymptotic dimension, then
 the coarse assembly map $  \mu_{E,X}:E\cO^{\infty}\bP(X)\to \Sigma E(X)$ is an equivalence.
\end{theorem}

\begin{proof}
Since the coarse assembly map is a natural transformation between coarse homology theories, it extends to a natural transformation between colimit preserving functors  defined on $\Sp\cX$ (see \cite[Cor.~4.24 and Rem.~4.27 ]{buen} for a precise interpretation of this statement).
As a consequence of Corollary~\ref{fweoiowefewew} this extension is an equivalence on motives in  $\Sp\cX\langle \mathrm{disc}\rangle$.

The assumptions on the space $X$ imply by \cite[Thm.~5.59]{buen}
that the motive $\Yo^{s}(X)$ belongs to $\Sp\cX\langle \mathrm{disc}\rangle$. 
\end{proof}

In the following we use Corollary \ref{fweoiowefewew} in order to  show that the homology theory $E\cO^{\infty}\bP$ inherits some pleasant additional properties from $E$.

 Let $E:\BC\to \bC$ be a coarse homology theory and assume in addition that $\bC$ is complete. If $X$ is a set and $x$ is a point in $X$, then  we can define by excision a projection \begin{equation}\label{qewfkhuihuihuihiuj}
E(X_{min,min})\simeq E(\{x\})\oplus E(X_{min,min}\setminus \{x\})\to E(\{x\})\ .
\end{equation} 
The family of these projections for all $x$ in $X$ induces a map
\begin{equation}\label{adbfhjakfdaewfadf}
E(X_{min,min})\to \prod_{x\in X}E(\{x\})\, .
\end{equation}

\begin{ddd}[{\cite[Def.~6.4]{buen}}]\label{eroigjoerggerferwef}
$E$ is called additive if \eqref{adbfhjakfdaewfadf} is an equivalence for every set $X$.
\end{ddd}

Let $E$ be a strong coarse homology theory.

\begin{prop}
If $E$ is  additive, then $E\cO^{\infty}\bP$ is additive.
\end{prop}

\begin{proof}
Since $X_{min,min}$  and $\{x\}$ are discrete, the coarse assembly maps $\mu_{E,X_{min,min}}$ and $\mu_{E,\{x\}}$ are equivalences.
We have the following diagram  
\[\xymatrix{
E\cO^{\infty}\bP(X_{min,min})\ar[d]^{?}\ar[rr]^-{\mu_{E,X_{min,min}}}_-{\simeq}&& \ar[d]^{\simeq}_{\eqref{adbfhjakfdaewfadf} } \Sigma E(X_{min,min})\\\prod_{x\in X} E\cO^{\infty}\bP(*)\ar[rr]^-{\prod_{x\in X}\mu_{E,\{x\}}}_-{\simeq}&&\prod_{x\in X}\Sigma E(*)
}\]
which commmutes by the naturality of the assembly map. 
For the right vertical equivalence we use that $E$ is additive, and that $\Sigma$ preserves products by stability of $\bC$. We conclude that
the morphism marked by $?$ is an equivalence and hence the additivity of $E\cO^{\infty}\bP$.
 \end{proof}

In \cite[Lem.~2.25]{buen} we observed that $\BC$ admits (very small) coproducts. If $E \colon \BC\to \bC$ is a coarse homology theory, then implicitly
$\bC$ is cocomplete and therefore admits (very small) coproducts as well. We can consider the property that $E$ preserves coproducts \cite[Def.~6.9]{buen}.  
 
 Let $E$ be a strong coarse homology theory.

\begin{prop}
If $E$  preserves coproducts, then $E\cO^{\infty}\bP$ preserves coproducts.
\end{prop}
 
\begin{proof}
  Let $(X_{i})_{i\in I}$ be a family in $\BC$. 
In \cite[Lem.~4.12]{buen} it is shown that the fibre  $F$ of the canonical map
\begin{equation}
\label{eq_map_coproduct_fiber}
\bigoplus_{i \in I} \Yo^s(X_i) \to \Yo^s\big(\coprod_{i \in I} X_i\big)
\end{equation}
belongs to $\Sp\cX\langle \mathrm{disc}\rangle$. 
We apply the extension of the  assembly $\mu_{E}$ map to the corresponding fibre sequence and obtain the following commuting diagram:
$$\xymatrix{
E\cO^{\infty}\bP(F)\ar[d]^{\mu_{E,F}}_{\simeq}\ar[r]&\bigoplus_{i\in I}E\cO^{\infty}\bP(X_{i})\ar[r]^{?}\ar[d]^{\oplus_{i\in I}\mu_{E,X_{i}}}&E\cO^{\infty}\bP\big(\coprod_{i \in I} X_i\big)\ar[d]^{\mu_{E,\coprod_{i \in I} X_i}}\\
\Sigma E(F)\ar[r]&\bigoplus_{i\in I}\Sigma E(X_{i})\ar[r]^{\simeq}&\Sigma E\big(\coprod_{i \in I} X_i\big)
}$$
The left vertical morphism is an equivalence since $F$ belongs to  $\Sp\cX\langle \mathrm{disc}\rangle$.
This implies that the right square is a pull-back square. 
The lower right horizontal morphism is an equivalence since $E$ preserves coproducts. 
We conclude that the morphism marked by $?$ is an equivalence. Hence $E\cO^{\infty}\bP$ preserves coproducts.
%
%
%
%
%
%
%
%
\end{proof}

\begin{rem}
If $X$ is a set, then we have an isomorphism $X_{min,max}\cong \bigsqcup_{x\in X}\{x\}$, where the coproduct is taken in $\BC$. In particular, if $X$ is infinite, then the natural morphism $X_{min,max}\to X_{min,min}$ (given by the identity map of $X$) is not  an isomorphism.  Additivity 
of a coarse homology theory  
is  a condition which differs from  the condition of being
coproduct preserving.  A coarse homology theory $E$ may have both properties at the same time, and in this case
$E(X_{min,max})\to E(X_{min,min})$ is equivalent to the natural inclusion
$$\bigoplus_{x\in X} E(\{x\})\to \prod_{x\in X} E(\{x\})\, .$$
\end{rem}

\subsection{{Finite decomposition complexity}}

Guentner, Tessera and Yu \cite{Guentner:2010aa} introduced a weaker condition than finite asymptotic dimension called finite decomposition complexity (FDC). In \cite{transb} we investigated under which assumptions on $E$ the condition that a bornological coarse space $X$ has FDC implies that $E\cO\bP(X)\simeq 0$ (even in the equivariant case).

The results in  \cite{transb} require the additional assumptions  that $E$  is weakly additive and  admits transfers.  In the following we explain these conditions.
  
In \cite{coarsetrans} we introduced the category $\BC_{tr}$ of bornological coarse spaces with transfers. It is an enlargement of the category $\BC$ by adding transfer morphisms.

Given a set $I$ and a bornological coarse space we can form the bornological coarse space $I_{min,min}\otimes X$ (see \cite[Ex.~2.17]{equicoarse}). Let $j_{i}\colon X\to I_{min,min}\otimes X$ denote the inclusion of the component with index $i$ which is a morphism in $\BC$. By design $\BC_{tr}$ contains a transfer morphism 
\[
\tr_{X,I}\colon X\to  I_{min,min}\otimes X
\]
which morally is the sum $\sum_{i\in I} j_{i}$ of the inclusion morphisms.

If $E$ is a coarse homology theory, then the construction of an extension $E_{tr}$ to the category $\BC_{tr}$ should be guided by the idea that the morphism 
\[
E_{tr}(\tr_{X,I})\colon E_{tr}(X)\to E_{tr}( I_{min,min}\otimes X)
\]
places identical copies of a cycle for $E_{tr}(X)$ on each copy $\{i\}\times X$ of $X$ in  $I_{min,min}\otimes X$.

A coarse homology theory with transfers is a functor $E_{tr}:\BC_{tr}\to \bC$ such that its restriction $E \colon \BC\to \bC$   along the  inclusion $\BC\to  \BC_{tr}$ is a coarse homology theory.  We say that $E_{tr}$ extends $E$.

One can show that for every
 $i$ in $I$
 the composition
\begin{equation}\label{frfjiofj34f34f3f}
E_{tr}(X)\xrightarrow{E_{tr}(\tr_{X,I})} E_{tr}(I_{min,min}\otimes X)\stackrel{\text{excision}}{\simeq} E_{tr}(X)\oplus E_{tr}((I\setminus \{i\})_{min,min}\otimes X) 
\end{equation}
  is equivalent to 
$$\id_{E_{tr}(X)}\oplus E_{tr}(\tr_{X,I\setminus\{i\}})\, .$$
Let $E$ be a  coarse homology theory.
\begin{ddd}[{\cite[Def.~1.2]{coarsetrans}}]\label{qwfkjqlwdewdewdqdqed}
$E$  admits transfers if it has an extension $E_{tr}$ to a coarse cohomology theory with transfers.
\end{ddd}

A coarse homology theory $E \colon \BC\to \bC$  is called strongly additive \cite[Def.~3.12]{equicoarse} if $\bC$ admits products and $E$ sends free unions to products, i.e., if
$$E\big(\bigsqcup^{\free}_{i\in I} X_{i}\big) \xrightarrow{\simeq} \prod_{i\in I} E(X_{i})$$
for every family $(X_{i})_{i\in I}$ of bornological coarse spaces, where the map is induced by the family of projections
$(E(\bigsqcup^{\free}_{i\in I} X_{i})\to E(X_{i}))_{i\in I}$ given  by excision.
For the definition of weak additivity (appearing in the assumptions of Theorem \ref{feoijofwefewfewf} below) we refer to \cite[Def.~2.23]{transb}. Note that 
strong additivity implies weak additivity  and additivity  in the sense of Definition \ref{eroigjoerggerferwef}.


\begin{ex}\label{wrgfiowrgegergegergerg}
The  coarse homology theories listed in the Example \ref{wrgfiowrgegergegergerg111} are all 
 strongly additive and admit transfers:
\begin{enumerate}
\item Coarse ordinary homology  \cite{coarsetrans}.
\item Coarse algebraic $K$-homology with coefficients in an additive category  \cite{coarsetrans}.
\item Coarse Waldhausen $K$-homology of spaces   with coefficients in a space  \cite{Bunke:aa}.
\item Coarse algebraic $K$-homology with coefficients in a left-exact $\infty$-category  \cite{unik}.
\item Coarse topological $K$-homology with coefficients in a $C^{*}$-category   {\cite{bu}}.\qedhere
\end{enumerate}
\end{ex}


Let $X$ be a bornological coarse space, and let $E \colon \BC\to \bC$ be a strong coarse homology theory.
\begin{theorem}\label{feoijofwefewfewf}
Assume:
\begin{enumerate}
\item \label{fewiofhwiofewf} $\bC$ is compactly generated. 
\item \label{fewiofhwiofewf1} $E$ is weakly additive.
\item \label{fewiofhwiofewf2} $E$ admits transfers.
\item $X_U$ has FDC for a cofinal set of entourages $U$ of $X$. \end{enumerate} Then the coarse assembly map $\mu_{E,X}$ is an equivalence.
\end{theorem}

\begin{proof}
This  follows from \cite[Thm.~1.3]{transb} and Remark~\ref{fwwoiuweofewfewfw}.
\end{proof}

\begin{rem}
Finite asymptotic dimension implies FDC. {Therefore}
the above Theorem~\ref{feoijofwefewfewf} generalizes Theorem \ref{woifowfwewfewf} provided $E$ has the required additional properties.
\hB
\end{rem}

\subsection{Scaleable spaces}
  
In the literature on the coarse Baum--Connes conjecture it is an important observation that the existence of a suitable scaling implies that  the analytic coarse assembly map in coarse $K$-homology is an isomorphism  \cite{hr}.  In the following we show analogous results for general coarse homology theories.

 Let $X$ be a uniform bornological coarse space, and let $s \colon X\to X$ be a morphism of uniform bornological coarse spaces.
 We assume that the uniform structure of $X$ is induced by a metric.
 \begin{ddd}\label{eiowefewfewfewf}
 The morphism $s$ is a scaling if it satisfies the following conditions:
 \begin{enumerate}
 \item\label{4u8934u894343434} $s$ is $1$-Lipschitz.
\item\label{t3io3t34t43t43t43t43t3} For every coarse entourage $W$ and uniform entourage $V$ of $X$ there exists $k$ in $\nat$ such that
$(s^{k}\times s^{k})(W)\subseteq V$.
\item\label{efoief23f23f2f2f2ff} For every coarse entourage $U$ of $X$   the union $\bigcup_{k\in \nat} (s^{k}\times s^{k})(U)$ is also a coarse entourage of $X$.
 \end{enumerate}
\end{ddd}

\begin{ex}\label{dviowewfefwefewfwe}
Assume that $X$ is a proper metric space whose  structures are induced from the metric. If
$s \colon X\to X$ is a map which is  $1/2$-Lipschitz and proper, then $s$ is a scaling in the sense of Definition \ref{eiowefewfewfewf}.
Note that in order to be a scaling in the sense of \cite[Def.~7.1]{hr} one must in addition assume that $s$ is coarsely and properly  homotopic to the identity. These conditions will be added in Definition \ref{ewfijwoefwefewfew543534} which characterizes coarse scalings.
\hB
\end{ex}

Using the existence of a scaling for $X$ we want to deduce that $E\cO(X)\simeq 0$ for suitable coarse homology theories $E$.
Similarly as in  the proof of \cite[Thm.~7.2]{hr}   the argument is based on an Eilenberg swindle.
In order to make this work in our abstract setting we {need to} assume that the homology theory admits transfers (Definition \ref{qwfkjqlwdewdewdqdqed}).

Let $X$ be a uniform bornological coarse space, and let $s \colon X\to X$ be a morphism. Furthermore let $E$ be a coarse homology theory.
\begin{prop}\label{ergioerogergregreg}
Assume:
\begin{enumerate}
\item $s:X\to X$ is a scaling.
\item \label{rgfoirgoregege4} $E(F_{\cU}(s))\simeq \id_{E(F_{\cU}( X))}$. 
\item $E$ admits transfers.
\item\label{ifjweofiewjfewioewffewfefe} $E\cO^{\infty}(s)\simeq \id_{E\cO^{\infty}(X)}$.
\end{enumerate}
Then
$E\cO(X)\simeq 0$.
\end{prop}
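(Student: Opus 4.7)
The plan is to run an Eilenberg swindle on $\cO(X)$ driven by the scaling $s$ together with the transfer structure on $E$. First, from the cone fibre sequence~\eqref{eq2ewrr} and Assumptions~\ref{ergioerogergregreg}.\ref{rgfoirgoregege4} and~\ref{ergioerogergregreg}.\ref{ifjweofiewjfewioewffewfefe} I would extract that $E\cO(s)\simeq\id_{E\cO(X)}$. Since Lemma~\ref{ewfiowefwefergergergerg} is natural in $X$, I may pass to the modified cone $\tilde\cO$ throughout and obtain $E\tilde\cO(\tilde s)\simeq\id$, where $\tilde s(t,x):=(t,s(x))$. Crucially, in $\tilde\cO(X)$---in contrast to $\cO(X)$---translations in the time coordinate are close to the identity, because arbitrary coarse entourages of the product $[0,\infty)\otimes X$ are permitted. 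Consequently the shift-and-scale self-map $\tilde\tau(t,x):=(t+1,s(x))$ satisfies $E\tilde\cO(\tilde\tau)\simeq\id$, and by iteration $E\tilde\cO(\tilde\tau^{k})\simeq\id$ for every $k\in\nat$, where $\tilde\tau^{k}(t,x)=(t+k,s^{k}(x))$.

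Next I would assemble these iterates into a single morphism
\[
\tilde\Phi\colon \nat_{min,min}\otimes\tilde\cO(X)\to \tilde\cO(X),\qquad (k,t,x)\mapsto (t+k,s^{k}(x)).
\]
Verifying that $\tilde\Phi$ is a morphism of bornological coarse spaces is the technical heart of the argument and uses all three items of Definition~\ref{eiowefewfewfewf}. Properness is clear because $\nat_{min,min}$ carries the minimal bornology and $s$ is proper. A generating entourage of $\nat_{min,min}\otimes\tilde\cO(X)$ has the form $\diag_{\nat}\times(V\cap U_{\phi})$; the spatial projection of $V$ is a coarse entourage $W$ of $X$, and condition~\ref{eiowefewfewfewf}.\ref{efoief23f23f2f2f2ff} places $\bigcup_{k}(s^{k}\times s^{k})(W)$ inside a single coarse entourage of $X$. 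For the uniform constraint, the candidate $\phi'(u):=\bigcup_{0\le k\le u}(s^{k}\times s^{k})(W\cap\phi(u-k))$ is cofinal in $\cT$: condition~\ref{eiowefewfewfewf}.\ref{4u8934u894343434} gives $(s^{k}\times s^{k})(\phi(u-k))\subseteq \phi(u-k)$, which disposes of all $k$ below some threshold $k_{0}$ by cofinality of $\phi$, while condition~\ref{eiowefewfewfewf}.\ref{t3io3t34t43t43t43t43t3} disposes of all $k\ge k_{0}$ by eventually shrinking $W$ into the prescribed target uniform entourage. Translation in the time coordinate is absorbed by a harmless enlargement of the chosen coarse entourage on $[0,\infty)\otimes X$.

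To close the swindle, set $a:=E(\tilde\Phi)\circ E(\tr_{\tilde\cO(X),\nat})$ and apply the defining relation~\eqref{frfjiofj34f34f3} of a coarse homology theory with transfers at the index $i=0$. Since $\tilde\Phi$ restricts to $\id$ on the $0$-th copy and to $\tilde\tau$ composed with a reindexed copy of $\tilde\Phi$ on $(\nat\setminus\{0\})\otimes\tilde\cO(X)$, one obtains the identity
\[
a\;\simeq\;\id_{E\tilde\cO(X)}+E(\tilde\tau)\circ a
\]
of endomorphisms of $E\tilde\cO(X)$. Substituting $E(\tilde\tau)\simeq\id$ from the first step yields $a\simeq\id+a$, forcing $\id_{E\tilde\cO(X)}\simeq 0$, so $E\cO(X)\simeq E\tilde\cO(X)\simeq 0$. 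The main obstacle is the controlledness verification of $\tilde\Phi$: one has to keep the spatial entourages bounded uniformly in $k$ while simultaneously exhibiting a cofinal uniform witness $\phi'$---precisely the compatibility that the three-part definition of a scaling is engineered to provide.
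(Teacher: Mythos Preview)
Your overall architecture matches the paper's---construct the map $\tilde\Phi$ on the modified cone and run an Eilenberg swindle via transfers---and your controlledness check for $\tilde\Phi$ is essentially the paper's separate lemma. The genuine gap is your first step: you cannot extract $E\cO(s)\simeq\id_{E\cO(X)}$ from the fibre sequence~\eqref{eq2ewrr} together with Assumptions~\ref{ergioerogergregreg}.\ref{rgfoirgoregege4} and~\ref{ergioerogergregreg}.\ref{ifjweofiewjfewioewffewfefe}. In a stable category, if a self-map of a fibre sequence $A\xrightarrow{\iota}B\to C$ restricts to the identity on $A$ and on $C$, the map on $B$ need not be the identity; one only obtains a factorisation
\[
E\cO(s)-\id_{E\cO(X)}\;\simeq\;\iota\circ\delta
\]
for some $\delta\colon E\cO(X)\to E(F_{\cU}(X))$ (for a counterexample take $B=A\oplus C$ and $f_{B}=\bigl(\begin{smallmatrix}1&h\\0&1\end{smallmatrix}\bigr)$ with $h\neq 0$). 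Consequently the substitution $E(\tilde\tau)\simeq\id$ in your swindle relation is unjustified.

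The paper works precisely with this weaker information. From the transfer relation it obtains $a\simeq\id+a\circ E\tilde\cO(s)$ (note the order: the scaling sits on the \emph{right} of $a$, arranged via the commuting square involving the shift $T$), hence $\id\simeq -a\circ\iota\circ\delta$. Assumption~\ref{ergioerogergregreg}.\ref{rgfoirgoregege4} then gives the secondary vanishing $\iota\circ\delta\circ\iota\simeq 0$; composing on the right with $\iota$ yields $\iota\simeq 0$, and feeding this back gives $\id_{E\tilde\cO(X)}\simeq 0$. Your version places $E(\tilde\tau)$ on the left of $a$, which even with the correct factorisation $E(\tilde\tau)-\id\simeq\iota\circ\delta$ does not obviously close: one would need information about $\delta\circ a$ or $a\circ\iota$ that is not available. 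So the repair is twofold: replace the false identity by the factorisation through $\iota$, and set up the swindle so that the scaling appears on the right of $a$.
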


Before starting with the proof of the above proposition, let us first prove the following statement. Recall Definition \ref{rgfporgergergereg} of the modified cone $\tilde \cO(X)$. 
Let $s \colon X\to X$ be a scaling.
We define the map of sets $$\Phi \colon 
\nat\times [0,\infty)\times X\to [0,\infty)\times X\, ,\quad 
\Phi(n,t,x) \coloneqq (n+t,s^{n}(x))\, .$$

\begin{lem}
The map $\Phi$ is a morphism of bornological coarse spaces
$$\Phi \colon  \nat_{min,min}\otimes \tilde \cO(X)\to\tilde  \cO(X)\, .$$
\end{lem}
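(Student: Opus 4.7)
The plan is to verify the two conditions defining a morphism of bornological coarse spaces: properness of $\Phi$ and controlledness of $\Phi$.

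Properness is straightforward. The bornology of $\tilde \cO(X)$ is generated by sets $[0,n]\times B$ for $n\in \nat$ and $B$ bounded in $X$. Since $s:X\to X$ is a morphism of $\UBC$, it is proper, so each $s^{-m}(B)$ is bounded in $X$. I then observe that
\[
\Phi^{-1}([0,n]\times B)\subseteq \{0,\dots,n\}\times [0,n]\times \bigcup_{m=0}^{n}s^{-m}(B)\ ,
\]
which is bounded in $\nat_{min,min}\otimes \tilde \cO(X)$, since a finite union of bounded subsets of $X$ is bounded.

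For controlledness, it suffices to handle a generating entourage of $\nat_{min,min}\otimes \tilde \cO(X)$, which has the form $\diag_{\nat}\times (V\cap U_{\phi})$ for a coarse entourage $V$ of $[0,\infty)\otimes X$ and a cofinal function $\phi:[0,\infty)\to \cT$. Without loss of generality $V\subseteq V_{1}\times V_{2}$ with $V_{i}$ entourages of the respective factors, and $\phi$ takes values in metric uniform entourages (which form a cofinal basis since the uniform structure is metric). For a pair $((n,t,x),(n,t',x'))$ in this entourage, its image under $\Phi\times \Phi$ is $((n+t,s^{n}(x)),(n+t',s^{n}(x')))$. The first coordinates remain paired in $V_{1}$ under simultaneous translation, and for the $X$-coordinates, scaling condition \ref{efoief23f23f2f2f2ff} ensures that $V_{2}':=\bigcup_{k\in \nat}(s^{k}\times s^{k})(V_{2})$ is a coarse entourage of $X$ containing $(s^{n}(x),s^{n}(x'))$. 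Hence the image lies in the coarse entourage $V_{1}\times V_{2}'$ of $[0,\infty)\otimes X$.

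The main task is to construct a cofinal $\phi':[0,\infty)\to \cT$ such that $(s^{n}(x),s^{n}(x'))\in \phi'(n+\max(t,t'))$ for all such pairs; granting this, the image lies in $(V_{1}\times V_{2}')\cap U_{\phi'}$, an entourage of $\tilde \cO(X)$. The defining requirement is $\phi'(q)\supseteq (s^{n}\times s^{n})(V_{2}\cap \phi(r))$ for all $n\in\nat$, $r\ge 0$ with $n+r=q$, so one defines $\phi'(q)$ to be any uniform entourage containing this union. The crux is cofinality: given a metric uniform entourage $V$ of $X$, scaling condition \ref{t3io3t34t43t43t43t43t3} yields $N$ with $(s^{N}\times s^{N})(V_{2})\subseteq V$, and cofinality of $\phi$ yields $R$ with $\phi(r)\subseteq V$ for $r\geq R$. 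For $q\geq N+R$ and any decomposition $n+r=q$ we have either $n\geq N$ (and then $(s^{n}\times s^{n})(V_{2})\subseteq (s^{n-N}\times s^{n-N})(V)\subseteq V$ using that $s$ is $1$-Lipschitz by \ref{4u8934u894343434} and $V$ is a metric entourage) or $r\geq R$ (and then $(s^{n}\times s^{n})(\phi(r))\subseteq \phi(r)\subseteq V$ by the same $1$-Lipschitz property applied to the metric entourage $\phi(r)$). Either way, the contribution to $\phi'(q)$ is contained in $V$, so we can arrange $\phi'(q)\subseteq V$ for $q\geq N+R$, establishing cofinality. The main obstacle is precisely this step, which forces one to combine all three scaling axioms simultaneously: \ref{4u8934u894343434} for stability of metric uniform entourages under $s^n\times s^n$, \ref{t3io3t34t43t43t43t43t3} for the eventual smallness of images of the coarse entourage $V_2$, and \ref{efoief23f23f2f2f2ff} for the coarse-side bound on the $X$-coordinates.
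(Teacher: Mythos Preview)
Your proof is correct and follows essentially the same strategy as the paper's. Both verify properness directly and then, for controlledness, construct a new cofinal function $\phi'$ from the given $\phi$ by combining all three scaling axioms exactly as you do: axiom \ref{efoief23f23f2f2f2ff} handles the coarse part $V_2'$, while axioms \ref{4u8934u894343434} and \ref{t3io3t34t43t43t43t43t3} together give cofinality of $\phi'$ via the same dichotomy ($n$ large versus $r$ large).

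The only presentational difference is that the paper passes immediately to a real-valued function $\psi$ with $\phi(t)=U_{\psi(t)}$ and defines $\psi'(t)=\max_{n\le t}\min\{\psi(t-n-r),e(n)\}$ with $e(n):=\sup\{d(s^n(x),s^n(y)):(x,y)\in V_2\}$, whereas you work directly with entourages. One small point: your ``without loss of generality $\phi$ takes values in metric uniform entourages'' is a genuine reduction that deserves a word of justification (replace $\phi$ by a pointwise larger cofinal function $t\mapsto U_{\psi(t)}$, which enlarges $U_\phi$ and hence still generates the coarse structure). The paper makes the same reduction with the same lack of comment. Alternatively, you can avoid this WLOG entirely in your second case by writing $(s^n\times s^n)(V_2\cap\phi(r))\subseteq (s^n\times s^n)(V)\subseteq V$, using only that the \emph{target} entourage $V$ is metric.
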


\begin{proof}
First we show that $\Phi$ is proper.
Let $B$ be a bounded subset  in $X$ and $u$ be in $\nat$ and consider the bounded 
subset $[0,u]\times B$ in $\tilde \cO(X)$. Then $\Phi^{-1}([0,u]\times B)$ is contained in
$[0,u]\times [0,\infty)\times X$.
The restriction of $\Phi$ to $\{n\}\times \tilde \cO(X)$ is proper for every $n$ in $\nat$ since the maps 
$s^{n}:X\to X$  and   $t\mapsto n+t:[0,\infty)\to [0,\infty)$ are proper.
Therefore we can conclude that
$\Phi^{-1}([0,u]\times B)$ is bounded.

We now show that $\Phi$ is controlled. It is easy to check using \ref{eiowefewfewfewf}.\ref{efoief23f23f2f2f2ff}
and the fact that $t\mapsto n+t$ is  $1$-Lipschitz that $\Phi$ is a morphism of bornological coarse spaces
$$\nat_{min,min}\otimes F_{\cU}([0,\infty)\otimes X)\to F_{\cU}([0,\infty)\otimes X)\, .$$
 
Let $\psi \colon [0,\infty)\to [0,\infty)$ be a   function such that $\lim_{t\to\infty}\psi(t)=0$. For simplicity we can assume that $\psi$ is monotonously decreasing. It determines a function $\phi \colon  [0,\infty) \to \cT$ by $\phi(t) \coloneqq U_{\psi(t)}$ as used in Definition \ref{rgfporgergergereg}.\ref{fiewjioffwefwefwef}. Let $W$ be a coarse entourage of $X$ and
$V \coloneqq U_{r}\times W$ be a coarse entourage of $[0,\infty)\otimes X$ for $r$ in $(0,\infty)$.   Then we must show that
$$(\Phi\times \Phi)(\diag(\nat)\times (V\cap U_{\phi})) \subseteq  U_{\phi^{\prime}}$$ for $\phi^{\prime}(t) = U_{\psi^{\prime}(t)}$ for some function
$\psi^{\prime}$ having the same properties as $\psi$.  This boils down to the assertion that for  all $t$ in $[0,\infty)$
we have $d(s^{n}(x),s^{n}(y))\le \psi^{\prime}(t)$ for all 
$n$ in $\nat$ with $t\ge n$ and $(x,y)\in W$ with $d(x,y)\le \psi(t-n-r) $ (here we use the monotonicity of $\psi$). Here we set $\psi(t) \coloneqq \psi(0)$ for negative $t$.

We define the monotonously decreasing function
$$e \colon \nat\to [0,\infty]\, , \quad e(n) \coloneqq \sup\{d(s^{n}(x),s^{n}(y))\:|\:   (x,y)\in W\}\, .$$ By \ref{eiowefewfewfewf}.\ref{t3io3t34t43t43t43t43t3}
   we have
   $\lim_{n\to \infty} e(n)=0$.
 We define
$$\psi^{\prime}(t) \coloneqq  \max\{\min\{\psi(t-n-r), {e(n)}\} \:|\: n\in \nat\:\&\: t\ge n\}\, .   $$
In view of   \ref{eiowefewfewfewf}.\ref{4u8934u894343434} this function would do the job
if $\lim_{t\to\infty} \psi^{\prime}(t)=0$.
Let $\epsilon$ in $(0,\infty)$ be given. Then we choose $n_{0}$ in $\nat$ so large that
$e(n)\le \epsilon$ for all $n$ in $\nat$ with $n\ge n_{0}$. Let furthermore $t_{0}$ in $[0,\infty)$ be so large that $\psi(t)\le \epsilon$ for all $t$ in $[t_0,\infty)$. If $t$ in $[0,\infty)$ satisfies
$t\ge n_{0}+t_{0}+r$, then
$\psi^{\prime}(t)\le \epsilon$.
\end{proof}

\begin{proof}[{Proof of Proposition~\ref{ergioerogergregreg}}]
Let $E_{tr}$ be an extension of $E$ to a coarse homology theory with transfers.
An application of the relation \eqref{frfjiofj34f34f3f} yields   a decomposition
\begin{equation}
\label{eqfegr4t534weret4}
E_{tr} ( \Phi\circ \tr_{\tilde\cO(X),\nat})\simeq  E_{tr}( \Phi^{\prime}\circ \tr_{\tilde\cO(X),\nat^{\ge 1}})+  \id_{E(\tilde \cO(X))}\ ,
\end{equation}
where $\Phi^{\prime}$ is the restriction of $\Phi$ to $\nat^{\ge 1}_{min,min}\otimes \tilde \cO(X)$.  
We consider the following commuting diagram in  $\BC_{tr}$:
\[\xymatrix{
\tilde \cO(X)\ar[d]_-{\tilde \cO(s)}\ar[rr]^-{\tr_{\tilde \cO(X),\nat^{\ge 1}}} & & \nat^{\ge 1}_{min,min}\otimes \tilde \cO(X)\ar[d]^{(n\mapsto n-1) \otimes \tilde \cO(s)}\ar[rr]^-{\Phi^{\prime}} & & \tilde  \cO(X) \\
\tilde \cO(X)\ar[rr]^-{\tr_{\tilde \cO(X),\nat}} & & \nat_{min,min}\otimes \tilde  \cO(X)\ar[rr]^-{\Phi} & & \tilde  \cO(X)\ar[u]_-{T}
}\]
where $T \colon \tilde \cO(X)\to \tilde \cO(X)$ is given by $T(t,x) \coloneqq (t+1,x)$. Note that the morphism $T$ is close to the identity.
  
Since $T$ is close to the identity  the commutativity of the above diagram implies
\begin{eqnarray*}
E_{tr}( \Phi^{\prime}\circ \tr_{\tilde\cO(X),\nat^{\ge 1}})&\simeq&
E_{tr}(T)\circ E_{tr}( \Phi)\circ E_{tr}(\tr_{\tilde \cO(X),\nat})\circ E_{tr}\tilde \cO(s)\\
&\simeq& E_{tr}( \Phi\circ  \tr_{\tilde \cO(X),\nat})\circ E\tilde \cO(s)\, ,
\end{eqnarray*}
from which we get, {using \eqref{eqfegr4t534weret4},}
\begin{equation}\label{ewfoij23or23r23r23rr}
E_{tr} ( \Phi\circ \tr_{\tilde \cO(X),\nat})\simeq E_{tr}(\Phi\circ  \tr_{\tilde \cO(X),\nat})\circ E\tilde \cO(s) + \id_{E\tilde \cO(X)}\, .
\end{equation}

We now consider the diagram (note that we are now using the cone instead of the modified cone as above)
\begin{equation}\label{qfoihjoiqwefqewfq}\xymatrix{
E(F_{\cU}(X))\ar[rr]^-{\iota} \ar@/^0.5cm/[d]^{E(F_{\cU}(s))} \ar@/_0.5cm/[d] & & E\cO(X)\ar@{..>}@/^0.3cm/[dll]_-{\delta} \ar[rr] \ar@/^0.5cm/[d]^{E\cO(s)} \ar@/_0.5cm/[d] & & E\cO^{\infty}(X)\ar[rr] \ar@/^0.5cm/[d]^{E\cO^{\infty}(s)} \ar@/_0.5cm/[d] & & \Sigma E(F_{\cU}(X))\ar@/^0.5cm/[d]^{\Sigma E(F_{\cU}(s))}\ar@/_0.5cm/[d]\\
E(F_{\cU}(X))\ar[rr]^-{\iota} & & E\cO(X)\ar[rr] & & E\cO^{\infty}(X)\ar[rr] & & \Sigma E(F_{\cU}(X))
}
\end{equation}
 
whose horizontal sequences are two copies of the cone sequence and the non-labeled vertical maps are induced by the identity. 
The diagram is a picture of two morphisms between fibre sequences (one is the identity) which we want to compare.
The Condition~\ref{ifjweofiewjfewioewffewfefe} yields    a morphism
$\delta \colon E\cO(X)\to E(F_{\cU}(X))$ such that
\begin{equation}
\label{eq243trerwert}
E\cO(s)-\id_{E\cO(X)}\simeq \iota\circ \delta\, .\footnote{This is an assertion about the lower triangle  in the left square in \eqref{qfoihjoiqwefqewfq}. We do not make any assertion about the upper triangle. 
}
\end{equation}
We then have 
\begin{equation}\label{ewfiuhi4r343}
\iota\circ  \delta\circ \iota\stackrel{\eqref{eq243trerwert}}{ \simeq} (E\cO(s)-\id_{E\cO(X)})\circ \iota
\stackrel{!}{\simeq} \iota \circ (E(F_{\cU}(s))-\id_{E(F_{\cU}(X))})\stackrel{}{\simeq}
0\, .
\end{equation}
where the equivalence marked by $!$ follows from the commutativity of the left squares in  \eqref{qfoihjoiqwefqewfq}, and the last equivalence is a consequence of 
Condition \ref{rgfoirgoregege4}.

In view of Lemma \ref{ewfiowefwefergergergerg} we get the same relations if we replace the cone by the modified cone. The equivalence \eqref{ewfoij23or23r23r23rr} implies that {(using in the second line \eqref{eq243trerwert} for the modified cone)}
\begin{eqnarray}
\id_{E\tilde \cO(X)} & \simeq & E_{tr}(\Phi\circ \tr_{\tilde \cO(X),\nat}) - E_{tr}(\Phi\circ \tr_{\tilde \cO(X),\nat})\circ E\tilde\cO(s)\nonumber\\
&\simeq & E_{tr}(\Phi\circ \tr_{\tilde \cO(X),\nat}) - E_{tr}(\Phi\circ  \tr_{\tilde \cO(X),\nat})\circ ({\iota \circ \delta + \id_{E\tilde \cO(X)}})\nonumber\\
& \simeq & - E_{tr}(\Phi\circ \tr_{\tilde\cO(X),\nat}) \circ \iota\circ  \delta
\label{ioujiodjewofefewfqw}\end{eqnarray}
If we {compose this equivalence from the right} with $\iota$ and use \eqref{ewfiuhi4r343}, then we get
$$\iota\simeq - E_{tr}(\Phi\circ \tr_{\nat,\tilde \cO(X)})  \circ \iota\circ  \delta\circ \iota \simeq 0\, .$$
From \eqref{ioujiodjewofefewfqw} we conclude that 
$$ \id_{E\tilde \cO(X)}\simeq 0\ ,$$
which in view of Lemma \ref{ewfiowefwefergergergerg} implies $E \cO(X)\simeq 0$.
\end{proof}

%
%

Our next concern are  the conditions \ref{ergioerogergregreg}.\ref{rgfoirgoregege4} and \ref{ergioerogergregreg}.\ref{ifjweofiewjfewioewffewfefe}. Condition \ref{ergioerogergregreg}.\ref{rgfoirgoregege4} is satisfied, e.g., if $F_{\cU}(s)$
is coarsely homotopic to the identity map. In the literature this  is a standard assumption on a scaling; see, e.g., Higson--Roe \cite{hr}.

Condition \ref{ergioerogergregreg}.\ref{ifjweofiewjfewioewffewfefe} is more problematic.
If $s$ is homotopic to the identity in the sense of $\UBC$,  then  \ref{ergioerogergregreg}.\ref{ifjweofiewjfewioewffewfefe} is satisfied  by the homotopy invariance of the functor $E\cO^{\infty}\colon\UBC\to \bC$, 
Unfortunately, in applications $s$ is rarely homotopic to the identity in the sense of $\UBC$. The standard assumption made  e.g.\ in Higson--Roe \cite{hr} is that $F_{\cC,\cU/2}(s)$ is homotopic to the identity map, i.e., that $s$ is homotopic to the identity in the sense of $\TopBorn$ (i.e., after forgetting the coarse and the uniform structures, {but the homotopies are still required to be proper}). If $E$ is   additive, then  $E\cO^{\infty}$ has better homotopy invariance properties on nice spaces {which we will use in the following to make the standard assumption of Higson--Roe also work in our situation.}

 Let $X$ be a 
 uniform bornological coarse space, and let $E \colon \BC\to \bC$ be a 
 coarse homology theory. Note that this implicitly implies that
 $\bC$ is stable and cocomplete.

\begin{lem}\label{giwerogerggergregeg}
Assume:
\begin{enumerate}
\item \label{rgfoirgoregege1} $X$  is homotopy   equivalent (in $\UBC$) to  a  locally finite, finite-dimensional simplicial complex equipped with the metric structures.
 \item $\bC$ is complete.
 \item \label{rgfoirgoregege2} $E$ is  additive.
\item $F_{\cC,\cU/2}(s)$ is homotopic to $\id_{F_{\cC,\cU/2}(X)}$.
\end{enumerate}
Then
$E\cO^{\infty}(s) \simeq \id_{E\cO^{\infty}(X)}$.
\end{lem}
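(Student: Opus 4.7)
The plan is to reduce the claim, via Proposition~\ref{fifowefweewfwef}, to the homotopy invariance of a locally finite homology theory on $\TopBorn$. The hypotheses of the lemma (presentability of $\bC$, countable additivity of $E$, and the homotopy equivalence of $X$ in $\UBC$ with a countable, locally finite, finite-dimensional simplicial complex) are exactly those required to invoke Proposition~\ref{fifowefweewfwef}, modulo the smallness assumption on $X$, which we may take for granted by our standing set-theoretic conventions.

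First I would apply Proposition~\ref{fifowefweewfwef} to obtain a natural equivalence
\[
E\cO^{\infty}(X)\simeq (\Sigma E(*)\wedge\Sigma^{\infty}_{+})^{\lf}(F_{\cC,\cU/2}(X))\ .
\]
Naturality of this equivalence with respect to morphisms in $\UBC$ implies that the endomorphism $E\cO^{\infty}(s)$ corresponds through it to the endomorphism of the right-hand side induced by the $\TopBorn$-morphism $F_{\cC,\cU/2}(s)$. Thus, proving $E\cO^{\infty}(s)\simeq \id$ is equivalent to proving that $(\Sigma E(*)\wedge\Sigma^{\infty}_{+})^{\lf}(F_{\cC,\cU/2}(s))\simeq \id$.

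Second, since $(\Sigma E(*)\wedge\Sigma^{\infty}_{+})^{\lf}$ is a locally finite homology theory on $\TopBorn$, it is in particular homotopy invariant. By hypothesis $F_{\cC,\cU/2}(s)$ is homotopic to $\id_{F_{\cC,\cU/2}(X)}$ in $\TopBorn$, so the induced endomorphism on $(\Sigma E(*)\wedge\Sigma^{\infty}_{+})^{\lf}(F_{\cC,\cU/2}(X))$ is equivalent to the identity. Transporting back through the equivalence of the previous step yields $E\cO^{\infty}(s)\simeq \id_{E\cO^{\infty}(X)}$, as desired.

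The main obstacle I foresee is verifying that the equivalence supplied by Proposition~\ref{fifowefweewfwef} is genuinely natural in $X$ with respect to arbitrary $\UBC$-morphisms, and not merely with respect to the homotopy equivalences used to identify $X$ with a simplicial complex. Once this naturality is in hand, the remaining ingredients — homotopy invariance of locally finite homology theories on $\TopBorn$ and the given proper homotopy between $F_{\cC,\cU/2}(s)$ and the identity — are entirely standard.
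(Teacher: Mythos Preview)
Your argument is correct and is essentially the same as the paper's: the paper cites Corollary~\ref{rgigoeri34345345345}, whose proof is precisely the reduction via Proposition~\ref{fifowefweewfwef} to the proper-homotopy invariance of the locally finite theory $(\Sigma E(*)\wedge\Sigma^{\infty}_{+})^{\lf}$ on $\TopBorn$ that you spell out. Your caveat about naturality is exactly the point the paper takes for granted in the sentence preceding Corollary~\ref{rgigoeri34345345345}.
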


\begin{proof}
 This is an immediate consequence of Corollary \ref{rgigoeri34345345345} which will be shown below.
\end{proof}


In the following definition of a coarse scaling we introduce a class of scalings with additional properties ensuring that Proposition \ref{ergioerogergregreg} is applicable.

Let $X$ be a uniform bornological coarse space whose uniform structure is induced by a metric, and let $s \colon X\to X$ be a scaling.
\begin{ddd}\label{ewfijwoefwefewfew543534}
The scaling $s$ is a coarse scaling if it satisfies in addition:
\begin{enumerate}
\item $F_{\cU}(s)$ is coarsely homotopic to the identity.
\item $F_{\cC,\cU/2}(s)$ is properly homotopic to the identity.
\end{enumerate}
\end{ddd}

\begin{rem}
A scaling in the sense of \cite[Def.~7.1]{hr} is a coarse scaling; see also Example \ref{dviowewfefwefewfwe}.
\hB
\end{rem}

The following corollary is an analog of Higson--Roe \cite[Thm.~7.2]{hr}. Assumption \ref{wrefiowefwfewfewfewfw444}.\ref{ihgioergregregerger} {does not occur} in \cite{hr} because 
the analogue of our $E\cO^{\infty}$ is the functor  $X\mapsto K_{*}(D^{*}(X))$ in the notation of \cite{hr}
which has good homotopy invariance properties replacing the application of our Lemma \ref{giwerogerggergregeg}.

\begin{kor}\label{wrefiowefwfewfewfewfw444}
Assume:
\begin{enumerate}
\item $\bC$ is complete.
\item $E$ is   additive and admits transfers.
 \item The uniform structure of $X$ is induced by a metric.
\item  $X$  is homotopy   equivalent (in $\UBC$) to  a locally finite, finite-dimensional simplicial complex equipped with the metric structures.
\item $X$ admits a coarse scaling (see Definition \ref{ewfijwoefwefewfew543534}).
\end{enumerate}
Then $E\cO (X)\simeq 0$ and the cone boundary $E\cO^{\infty}(X)\to \Sigma E(X)$ is an equivalence.
\end{kor}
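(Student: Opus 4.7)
The plan is to deduce the Corollary directly from Proposition~\ref{ergioerogergregreg} by verifying each of its four hypotheses in turn, using the assumptions of the Corollary together with Lemma~\ref{giwerogerggergregeg}. Once $E\cO(X)\simeq 0$ is established, the assertion that $E\cO^{\infty}(X)\to \Sigma E(X)$ is an equivalence falls out of the cone fibre sequence~\eqref{eq2ewrr}.

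First I would let $s:X\to X$ be the coarse scaling provided by assumption~(5). By Definition~\ref{ewfijwoefwefewfew543534}, $s$ is in particular a scaling in the sense of Definition~\ref{eiowefewfewfewf}, which supplies hypothesis~(1) of Proposition~\ref{ergioerogergregreg}. Moreover, the first clause of Definition~\ref{ewfijwoefwefewfew543534} asserts that $F_{\cU}(s)$ is coarsely homotopic to $\id_{F_{\cU}(X)}$, so coarse homotopy invariance of the coarse homology theory $E$ yields $E(F_{\cU}(s))\simeq \id_{E(F_{\cU}(X))}$, verifying hypothesis~(2). Hypothesis~(3) is immediate from assumption~(1) of the Corollary.

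The remaining condition is hypothesis~(4), namely $E\cO^{\infty}(s)\simeq \id_{E\cO^{\infty}(X)}$. For this I would invoke Lemma~\ref{giwerogerggergregeg}: assumption~(4) of the Corollary provides the homotopy equivalence in $\UBC$ between $X$ and a countable, locally finite, finite-dimensional simplicial complex; assumption~(1) of the Corollary provides countable additivity of $E$; assumption~(2) provides presentability of $\bC$; and the second clause in Definition~\ref{ewfijwoefwefewfew543534} ensures that $F_{\cC,\cU/2}(s)$ is properly homotopic to $\id_{F_{\cC,\cU/2}(X)}$, which is exactly the hypothesis needed to apply Lemma~\ref{giwerogerggergregeg}.

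With all four hypotheses of Proposition~\ref{ergioerogergregreg} confirmed, we obtain $E\cO(X)\simeq 0$. Plugging this into the cone fibre sequence~\eqref{eq2ewrr} gives
$$E(F_{\cU}(X))\to 0 \to E\cO^{\infty}(X)\xrightarrow{E(\partial)}\Sigma E(F_{\cU}(X)),$$
so the cone boundary $E(\partial)$ is an equivalence, establishing the second assertion of the Corollary. There is no real obstacle in this argument: it is essentially bookkeeping that matches the hypotheses of the Corollary against those of Proposition~\ref{ergioerogergregreg} and Lemma~\ref{giwerogerggergregeg}. The genuine analytic content has already been absorbed into those two earlier results, in particular into the Eilenberg-swindle argument of Proposition~\ref{ergioerogergregreg} and into the countable-additivity calculation underlying Lemma~\ref{giwerogerggergregeg}.
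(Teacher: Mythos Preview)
Your proposal is correct and follows exactly the same approach as the paper: the paper's proof simply states that the result follows from Proposition~\ref{ergioerogergregreg}, with Lemma~\ref{giwerogerggergregeg} verifying Assumption~\ref{ergioerogergregreg}.\ref{ifjweofiewjfewioewffewfefe}. You have merely spelled out in detail the hypothesis-matching that the paper leaves implicit.
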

\begin{proof} This follows from  Proposition \ref{ergioerogergregreg}. Lemma \ref{giwerogerggergregeg} verifies Assumption \ref{ergioerogergregreg}.\ref{ifjweofiewjfewioewffewfefe}.
\end{proof}

\begin{ex}\label{ihfiweofwefewfe2}
A typical example of a uniform bornological coarse space which admits a coarse scaling is {a Euclidean} cone. Let $Y$ be a subset of the unit sphere in a Hilbert space, and let $X$ be the cone over $Y$ with the metric induced from the Hilbert space. 
We consider $X$ as a uniform bornological coarse space with all structures induced from the metric.
Then the map
$$s \colon X\to X\, , \quad s(x) \coloneqq x / 2$$
is a coarse scaling.

If $Y$ has a finite-dimensional, locally finite triangulation with a uniform bound on the size of its simplices, then
so does $X$.  In this case Corollary \ref{wrefiowefwfewfewfewfw444} can be applied to $X$.
\hB
\end{ex}


Let $X$ be a uniform bornological coarse space, and let $E \colon \BC\to \bC$ be a strong 
coarse homology theory.

\begin{theorem}\label{wrefiowefwfewfewfewfw}
Assume:
\begin{enumerate}
 \item \label{fwoifeowfwef24} $\bC$ is  complete.
\item\label{ergioergergerg} $E$ is   additive and admits transfers.
\item The uniform structure of $X$ is induced by a metric.
\item\label{ihgioergregregerger}   $X$  is homotopy   equivalent (in $\UBC$) to  a   locally finite, finite-dimensional simplicial complex equipped with the metric structures.
\item $X$ admits a coarse scaling (see Definition \ref{ewfijwoefwefewfew543534}).
\item\label{gioergjoierg34t334t55} $X$ is coarsifying (Definition \ref{wefiowef34t6453}).
\end{enumerate}
Then $E\cO\bP(F_{\cU}(X))\simeq 0$ and therefore the coarse assembly map $\mu_{E,F_{\cU}(X)}$ is
an equivalence.
\end{theorem}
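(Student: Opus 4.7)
The strategy is to transport the vanishing of $E\cO(X)$, which is supplied by Corollary~\ref{wrefiowefwfewfewfewfw444}, to a vanishing of $E\cO\bP(F_{\cU}(X))$, using that $X$ is coarsifying. The assembly map statement then follows from Remark~\ref{fwwoiuweofewfewfw}.

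First I would observe that the hypotheses~\ref{ergioergergerg}--\ref{ihgioergregregerger} of the theorem, together with the coarse scaling assumption, are exactly the hypotheses of Corollary~\ref{wrefiowefwfewfewfewfw444}. (Strongness of $E$ is not needed for that corollary; it enters only at the next step.) Consequently Corollary~\ref{wrefiowefwfewfewfewfw444} yields
$$ E\cO(X) \simeq 0. $$

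Next, since $E$ is strong by Assumption~\ref{ergioergergerg}, Lemma~\ref{fewoiiofwefewf} shows that $E\cO:\UBC\to \bC$ is a local homology theory, and Corollary~\ref{giugoeggergg} then provides a unique colimit-preserving extension $E\cO:\Sp\cB\to \bC$. Because $X$ is coarsifying by Assumption~\ref{gioergjoierg34t334t55}, the comparison map
$$ c_X : \Yo\cB^s(X) \xrightarrow{\simeq} \bP(\bF(X)) $$
is an equivalence in $\Sp\cB$, where $\bP$ is interpreted as the colimit-preserving functor $\Sp\cX\to \Sp\cB$ and $\bF(X)=\Yo^s(F_{\cU}(X))$. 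Applying the colimit-preserving functor $E\cO:\Sp\cB\to \bC$ to $c_X$ produces an equivalence
$$ E\cO(X) \xrightarrow{\simeq} E\cO\bP(F_{\cU}(X)). $$
Combined with the previous step this gives $E\cO\bP(F_{\cU}(X))\simeq 0$.

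Finally, by Remark~\ref{fwwoiuweofewfewfw} the vanishing of $E\cO\bP(F_{\cU}(X))$ is equivalent to the coarse assembly map $\mu_{E,F_{\cU}(X)}:E\cO^{\infty}\bP(F_{\cU}(X))\to \Sigma E(F_{\cU}(X))$ being an equivalence. The real work lies in Corollary~\ref{wrefiowefwfewfewfewfw444} (and the Eilenberg swindle underlying Proposition~\ref{ergioerogergregreg}); the present theorem only adds the transport step along the coarsifying comparison map, so no substantial new obstacle arises.
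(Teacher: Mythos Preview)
Your proof is correct and follows essentially the same approach as the paper: use that $X$ is coarsifying together with the fact that $E\cO$ is a local homology theory (Lemma~\ref{fewoiiofwefewf}, requiring strongness) to identify $E\cO\bP(F_{\cU}(X))\simeq E\cO(X)$, and then invoke Corollary~\ref{wrefiowefwfewfewfewfw444} to obtain $E\cO(X)\simeq 0$. You only reverse the order of these two steps and spell out the role of the comparison map $c_X$ and Remark~\ref{fwwoiuweofewfewfw} a bit more explicitly, but the substance is identical.
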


\begin{proof}
Since $X$ is coarsifying and $E\cO$ is a local homology theory (Lemma \ref{fewoiiofwefewf}) we have an equivalence
$E\cO\bP(F_{\cU}(X))\simeq E\cO(X)$. We now apply Corollary \ref{wrefiowefwfewfewfewfw444} in order to conclude that
$E\cO(X)\simeq 0$.
\end{proof}

\begin{ex}\label{ex2435trfwer}
Let $Y$ and $X$ be as in Example \ref{ihfiweofwefewfe2}.
In general we can not expect $X$ to be coarsifying even if $Y$ is compact and the Hilbert space is finite-dimensional.
Especially, we do not {know if}
the analogue of \cite[Prop.~4.3]{hr} is true in our generality. By using Proposition \ref{prop34redsfg} one can prove that $X$ is coarsifying if $Y$ is a finite simplicial complex. Hence one can apply Theorem \ref{wrefiowefwfewfewfewfw} to Euclidean cones over finite complexes.

Therefore we get the analogue of \cite[Cor.~7.3]{hr} under the additional assumption of $Y$ being a finite simplicial complex
(instead of a finite-dimensional compact metric space).

Every complete, simply-connected, non-positively curved Riemannian manifold is coarsely homotopy equivalent
to the Euclidean cone over a finite-dimensional sphere. Since a  finite-dimensional sphere has a finite triangulation, Theorem \ref{wrefiowefwfewfewfewfw} provides a generalization of \cite[Cor.~7.4]{hr}.

Because  of the Assumptions \ref{wrefiowefwfewfewfewfw}.\ref{ihgioergregregerger} and \ref{wrefiowefwfewfewfewfw}.\ref{gioergjoierg34t334t55} we are not able to apply Theorem \ref{wrefiowefwfewfewfewfw} to cones over arbitrary compact metric spaces. In particular, we do not obtain the analogue of \cite[Cor.~8.2]{hr} asserting the coarse Baum--Connes conjecture for all hyperbolic (proper) metric spaces.

We do not know whether we should expect that the assmbly map $\mu_{E,F_{\cU}(X)}$ is an equivalence for all hyperbolic (proper) metric spaces or Euclidean cones over finite-dimensional compact metric spaces and arbitrary coarse homology theories $E$ satisfying the Assumptions  \ref{wrefiowefwfewfewfewfw}.\ref{ergioergergerg} and \ref{wrefiowefwfewfewfewfw}.\ref{fwoifeowfwef24}.
\hB
\end{ex}

The next corollary specializes Theorem \ref{wrefiowefwfewfewfewfw} by utilizing a convenient condition on the space $X$ to be coarsifying.
Let $E \colon \BC\to \bC$ be a  strong 
coarse homology theory.
Let $K$ be a simplicial complex, and let $K_{d}$ be the associated uniform bornological coarse space. 
\begin{kor}\label{wrfiowefwewfewfw}
Assume:
\begin{enumerate}
\item $\bC$ is complete.
\item $E$ is  additive  and admits transfers.
\item $K$ has bounded geometry.
\item $K_{d}$ is {equicontinuously} contractible.
\item $K_{d}$ admits a coarse scaling.
\end{enumerate}
 Then the coarse assembly map 
$\mu_{E,F_{\cU}(K_{d})}$ is an equivalence.
\end{kor}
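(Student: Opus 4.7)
The plan is to verify all six hypotheses of Theorem \ref{wrefiowefwfewfewfewfw} for the uniform bornological coarse space $X := K_{d}$, so that the conclusion $\mu_{E,F_{\cU}(K_{d})}$ is an equivalence follows directly. Hypotheses \ref{wrefiowefwfewfewfewfw}.\ref{ergioergergerg} and \ref{wrefiowefwfewfewfewfw}.\ref{fwoifeowfwef24} are included verbatim in our assumptions, and the assumption that $X$ admits a coarse scaling is also given. The uniform structure of $K_{d}$ is by definition the one induced by the spherical path metric, so the metric assumption is immediate.

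It remains to check the two structural hypotheses: that $X$ is homotopy equivalent in $\UBC$ to a countable, locally finite, finite-dimensional simplicial complex, and that $X$ is coarsifying. The first is essentially trivial: $X = K_{d}$ is already such a simplicial complex (with the tautological identity homotopy equivalence). Indeed, bounded geometry of $K$ in the sense of the definition preceding Proposition \ref{prop34redsfg} implies a uniform bound on the number of vertices in the star of each vertex, and hence (via the standard argument bounding the dimension of a simplex by the size of the star of any of its vertices) both finite-dimensionality and local finiteness. Countability is automatic once we restrict to the connected components relevant for our argument, or follows from separability combined with local finiteness.

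For the coarsifying property, this is precisely the content of Proposition \ref{prop34redsfg}: a simplicial complex of bounded geometry which is equicontinuously contractible is coarsifying. Both conditions are among our hypotheses, so Proposition \ref{prop34redsfg} applies and yields that $K_{d}$ is coarsifying.

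With all hypotheses of Theorem \ref{wrefiowefwfewfewfewfw} verified, the theorem yields $E\cO \bP(F_{\cU}(K_{d})) \simeq 0$, and hence via Remark \ref{fwwoiuweofewfewfw} the coarse assembly map $\mu_{E,F_{\cU}(K_{d})}$ is an equivalence. The main (and essentially only) substantive point is thus the appeal to Proposition \ref{prop34redsfg}; everything else is bookkeeping, and no step is expected to present a real obstacle since this corollary is explicitly advertised as a specialization of Theorem \ref{wrefiowefwfewfewfewfw} obtained by replacing the abstract coarsifying hypothesis with the convenient geometric conditions of bounded geometry and equicontinuous contractibility.
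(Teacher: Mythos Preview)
Your proof is correct and follows exactly the paper's own approach: the paper's proof is the single line ``Combine Proposition \ref{prop34redsfg} with Proposition [Theorem] \ref{wrefiowefwfewfewfewfw},'' and you have simply spelled out the bookkeeping. One small sharpening: countability of $K$ follows cleanly because equicontinuous contractibility (already for $q=0$) forces $K$ to be path-connected, and a connected locally finite simplicial complex is countable.
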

\begin{proof}
Combine Proposition \ref{prop34redsfg} with Proposition \ref{wrefiowefwfewfewfewfw}.
\end{proof}

\begin{ex}
If $X$ is a tree or an affine Bruhat--Tits building  of bounded geometry, then Corollary \ref{wrfiowefwewfewfw} applies to $X$.
Hence we obtain the analogue of \cite[Cor.~7.5]{hr}.
\hB
\end{ex}

\begin{ex}
All  coarse homology theories listed in Example~\ref{wrgfiowrgegergegergerg} are strong,  additive, and admit transfers. Moreover, their target categories are complete.
Therefore the above theorems  apply to them.
 
Examples of spaces admitting coarse scalings and which are homotopy equivalent (in $\UBC$) to uniformly contractible simplicial complexes of bounded geometry are simply-connected complete Riemannian manifolds $M$ with sectional curvatures satisfying $-C \le \mathrm{sec} \le 0$ for a positive constant $C$. The coarse scaling $s$ is in this case given by, e.g., $s(x)  \coloneqq  \exp(\log(x)/2)$, where we have fixed a base point $x_0$  in $M$, $\exp  \colon  T_{x_0} M \to M$ is the Riemannian exponential map, and $\log  \colon  M \to T_{x_0} M$ is its inverse.
\hB
\end{ex}

\section{Calculation of \texorpdfstring{$\boldsymbol{E\cO^{\infty}}$}{EOinfty}}

The goal of this section is to provide a computation of $E\cO^{\infty}(X)$ in terms of the value  $E(\ast)$ of $E$ at the one-point space (see Proposition~\ref{fifowefweewfwef}). For this calculation we must 
 require that $E$ is  additive 
 



In this section we assume that $\bC$ is a stable and complete $\infty$-category. For the moment this suffices to construct the locally finite evaluation.
 Later we will in addition assume that $\bC$ is  cocomplete.

Let $F\colon\UBC \to \bC$ be a functor and $X$ a  small uniform bornological coarse space. 
\begin{ddd}\label{fwjeofiewffewwfewfw}
We define the locally finite evaluation of $F$ at   $X$  by
\begin{equation}\label{fwerfoio23r23r}
F^{\lf}(X):= {\lim}_{W} \ \Cofib( F(X\setminus W)\to F(X))\ ,
\end{equation}
where $W$ runs over all bounded subsets of $X$.
\end{ddd}
 
Similary as in  \cite[Rem.~7.16]{buen} one can turn {the above definition} into a construction of a functor $F^{\lf} \colon \UBC\to \bC$. 
\begin{rem}
Here are the details. We consider the category $\UBC^{\cB}$ of pairs $(X,W)$, where
$X$ is in $\UBC$ and $W$ is a bounded subset of $X$. A morphism $f \colon (X,W)\to (X^\prime,W^\prime)$ is a morphism $f \colon X\to X^{\prime}$ in $\UBC $ with $f(W)\subseteq W^{\prime}$. We have  the functors $$p \colon  \UBC^{\cB}\to \UBC \, , \quad p(X,W)  \coloneqq  X$$
and
$$\tilde F \colon \UBC^{ \cB} \to \bC\, , \quad \tilde F(X,W) \coloneqq  \Cofib( F(X\setminus W)\to F(X))\, .$$ We then define
the functor $F^{\lf}$ as the right Kan extension of $\tilde F$ along $p$:
$$\xymatrix{\UBC^{\cB}\ar[rr]^-{\tilde F}\ar[d]_-{p}&&\bC\,.\\\UBC \ar[urr]_-{F^{\lf}}}$$
This right Kan extension exists by our assumption on $\bC$, and the formula \eqref{fwerfoio23r23r}  follows from the pointwise formula for the evaluation of the right Kan extension.
\hB
\end{rem} 

\begin{rem}\label{efiowefwefewfefew}
If $F$ is induced from a functor $F^{\prime} \colon \TopBorn\to \bC$  by
$F=F^{\prime}\circ F_{\cC,\cU/2}$, then 
 we have an equivalence
$$F^{\lf}\simeq F^{\prime,\lf}\circ F_{\cC,\cU/2}\, ,$$
where
$F^{\prime,\lf}$ is exactly the locally finite evaluation as defined in \cite[Def.~7.15]{buen}. 
\hB
\end{rem}

We have a natural morphism $F(X)\to F^{\lf}(X)$.

\begin{lem}\label{wergiowergerwfrwfrefwfer}
If $F$ is homotopy invariant, then so is  $F^{\lf}$.
\end{lem}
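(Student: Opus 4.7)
The plan is to unfold the formula defining $F^{\lf}([0,1]\otimes X)$ and exhibit a cofinal subdiagram on which the homotopy invariance of $F$ can be applied term by term. Throughout, note that $[0,1]\otimes X$ is again small: the underlying topological space is the product $[0,1]\times X$, which is locally compact and separable, and the bornology is that of relatively compact subsets since $[0,1]$ is compact.

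First I would show that the open subsets of $[0,1]\otimes X$ of the form $[0,1]\times W$, with $W$ open in $X$ of compact closure, are cofinal in the poset of all open subsets of $[0,1]\otimes X$ with compact closure. This is because any compact subset $K$ of $[0,1]\times X$ has compact image $p(K)$ under the projection $p : [0,1]\times X \to X$, and $p(K)$ is contained in some open $W\subseteq X$ with compact closure; then $K\subseteq [0,1]\times W$. Moreover for such $W$ we have the set-theoretic identity
\[
([0,1]\otimes X)\setminus([0,1]\times W) = [0,1]\otimes (X\setminus W)\, ,
\]
which matches the uniform bornological coarse structure on the subspace.

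Using this cofinality together with the pointwise formula for the limit, I get
\[
F^{\lf}([0,1]\otimes X)\simeq \lim_{W}\Cofib\bigl(F([0,1]\otimes(X\setminus W))\to F([0,1]\otimes X)\bigr)\, ,
\]
where the limit now runs over open $W$ in $X$ with compact closure. Since $F$ is homotopy invariant, the projections $[0,1]\otimes X\to X$ and $[0,1]\otimes(X\setminus W)\to X\setminus W$ induce equivalences on $F$; taking cofibres commutes with equivalences, so each term of the limit is equivalent to $\Cofib(F(X\setminus W)\to F(X))$, and this equivalence is natural in $W$.

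Passing to the limit, the projection therefore induces an equivalence $F^{\lf}([0,1]\otimes X)\xrightarrow{\simeq} F^{\lf}(X)$, which proves that $F^{\lf}$ is homotopy invariant. The only mildly delicate point is the cofinality argument in the first step; once that is in place, the rest is formal.
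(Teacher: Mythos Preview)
Your proof is correct and follows exactly the approach the paper indicates: the paper's own proof simply refers to \cite[Lem.~6.67]{buen} and notes the key cofinality observation that the subsets of the form $[0,1]\times W$ are cofinal in the open subsets of $[0,1]\otimes X$ with compact closure. You have correctly supplied the details of this cofinality argument and the subsequent termwise application of homotopy invariance.
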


\begin{proof}
The proof is the same as the one of 
\cite[Lem.~7.35]{buen}. One has just to observe that the subsets of the form
$[0,1]\times W$ of $[0,1]\otimes X$ are cofinal in the bounded  subsets of $[0,1]\otimes X$.
\end{proof}

\begin{lem}\label{riguhierofewewfqfewf}
If $F$ satisfies excision for decompositions into closed or open subsets, then so does $F^{\lf}$.
\end{lem}

\begin{proof}
The argument is the same as for \cite[Lem.~7.36]{buen}.
\end{proof}

\begin{rem}
Note that open or closed excision in the sense of Definition \ref{reffoiweffewff} involves additional assumptions on the subsets.
If $F$ satisfies excision in the sense of this Definition, then it is not clear what kind of excision properties $F^{\lf}$ has. The problem is that  the intersection with $X\setminus W$ does not necessarily preserve coarsely or uniformly excisive pairs.

In Lemma~\ref{eoigepggrgregegerg} below we show that under the additional assumption that $F$ is invariant under coarsening the functor $F^{\lf}$ at least satisfies excision for decompositions of simplicial complexes into closed subsets.
\hB
\end{rem}

Let $X$ be a   uniform bornological coarse space.
\begin{ddd} A coarsening  $X^{\prime}$ of $X$ is a    \ubs obtained from $X$ by replacing the coarse structure by a larger one which is still compatible with the bornology.
\end{ddd}
Note that the identity of the underlying sets is a morphism $X\to X^{\prime}$ of uniform bornological coarse spaces.

Let $F \colon \UBC\to \bC$ be a functor.
\begin{ddd}\label{gfiorjgoergre345}We say that $F$ is invariant under coarsening if for every  \ubs $X$ and coarsening $X\to X^{\prime}$ the induced morphism 
 $F(X)\to F(X^{\prime})$ is an equivalence. 
\end{ddd}

\begin{ex}\label{fiuiowfewfewf}
The functor $\cO^{\infty} \colon \UBC\to \Sp\cX$ is invariant under coarsening, see \cite[Prop.~9.33]{equicoarse}.
\hB
\end{ex}

\begin{lem}\label{lem_locally_finite_invariant_coarsening}
If $F$ is invariant under coarsening, then so is $F^{\lf}$.
\end{lem}

\begin{proof}
The assertion follows immediately from the defining formula~\eqref{fwerfoio23r23r}.
\end{proof}

A simplicial complex has a spherical path quasi metric which induces a  metric uniform, a metric coarse, and a metric bornological structure.  If we consider a simplicial complex as an object of $\UBC$ equipped with these structures, then we say that it has the metric structures.

Let $X$ be a   simplicial complex with the metric uniform and coarse structures, a compatible bornology (not necessarily the metric one), and with a decomposition $(A,B)$ into closed subsets.

\begin{lem}\label{eoigepggrgregegerg}
If $F$   satisfies closed excision in the sense   of Definition \ref{reffoiweffewff} and is invariant under coarsening, then we have a push-out square
\begin{equation}\label{xqwxiuhiuxqx}
\xymatrix{F^{\lf}(A\cap B)\ar[r]\ar[d]&F^{\lf}(A)\ar[d]\\F^{\lf}(B)\ar[r]&F^{\lf}(X)}
\end{equation}
\end{lem}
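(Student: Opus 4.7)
The plan is to express both sides of \eqref{xqwxiuhiuxqx} via the pointwise formula \eqref{fwerfoio23r23r} and reduce to the statement that for each $W$ in a suitable cofinal subfamily of open subsets of $X$ with compact closure, the square
$$\xymatrix{\Cofib(F((A\cap B)\setminus W)\to F(A\cap B))\ar[r]\ar[d]&\Cofib(F(A\setminus W)\to F(A))\ar[d]\\ \Cofib(F(B\setminus W)\to F(B))\ar[r]&\Cofib(F(X\setminus W)\to F(X))}$$
is cocartesian. Since $\bC$ is stable, cocartesian and cartesian squares coincide and cartesian squares are preserved by limits; hence taking the limit over $W$ then yields the cocartesian square \eqref{xqwxiuhiuxqx}.

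\textbf{Cocartesian cofibers.} In a stable $\infty$-category the cofiber functor $\mathrm{Fun}(\Delta^1,\bC)\to \bC$ preserves colimits; in particular taking fiberwise cofibers of a map between two cocartesian squares yields again a cocartesian square. It therefore suffices to exhibit, for each $W$ in the cofinal subfamily, a cocartesian square of the corners $F(Z)$ and a cocartesian square of the corners $F(Z\setminus W)$ as $Z$ ranges over $\{A\cap B,A,B,X\}$. The square of $F(Z)$'s is cocartesian by applying excision of $F$ (Definition \ref{reffoiweffewff}) to the sub-complex decomposition $(A,B)$ of $X$, which is uniformly and coarsely excisive by Example \ref{wfiofwefewfwefewf}.

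\textbf{Cofinal subfamily and coarsening.} For the complementary square I would take the cofinal family to consist of $W:=\mathrm{ost}(K)$, the open star in $X$ of a finite sub-complex $K$ of $X$. Since $X$ is locally finite, $\overline{W}$ is a finite sub-complex, hence compact, and such $W$'s are cofinal among open subsets with compact closure. The complement $Y:=X\setminus W$ is then the sub-complex of $X$ spanned by the vertices not belonging to $K$, and $(A\cap Y,B\cap Y)$ is a decomposition of $Y$ into sub-complexes. Equipping $Y$, $A\cap Y$, $B\cap Y$, and $(A\cap B)\cap Y$ with their intrinsic path-metric uniform bornological coarse structures, Example \ref{wfiofwefewfwefewf} provides a uniformly and coarsely excisive closed decomposition; excision of $F$ then yields the desired cocartesian square for the $F(Z\setminus W)$'s with respect to the intrinsic structures.

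\textbf{Main obstacle.} The remaining and main subtlety is to identify these intrinsic structures with the structures induced from $X$, which are the ones seen by $F^{\lf}$. On a sub-complex the induced and intrinsic uniform structures and bornologies coincide, while the intrinsic coarse structure is smaller than the induced one (paths in $Y$ are generally longer than paths in $X$). Thus the induced structure is a coarsening of the intrinsic one, and the coarsening-invariance assumption (Definition \ref{gfiorjgoergre345}) identifies the two evaluations of $F$ applied to each of $Y$, $A\cap Y$, $B\cap Y$, and $(A\cap B)\cap Y$. Transferring the cocartesian square along this identification and assembling the pieces as outlined above then yields \eqref{xqwxiuhiuxqx}.
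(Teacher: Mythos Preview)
Your proof is correct and follows essentially the same approach as the paper's own argument: restrict to a cofinal family of $W$'s for which $X\setminus W$ is a sub-complex, use coarsening invariance to replace the induced coarse structure on the complement by the intrinsic path-metric one so that Example~\ref{wfiofwefewfwefewf} applies, and then express \eqref{xqwxiuhiuxqx} as a limit of cofibres of maps between cocartesian squares. The only cosmetic difference is your choice of open stars of finite sub-complexes rather than interiors of finite sub-complexes for the cofinal family, which is immaterial.
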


\begin{proof}
We use that the cofibre of a map of cocartesian squares is a cocartesian square.

In the  limit \eqref{fwerfoio23r23r} we can restrict $W$ to run only over the interiors of subcomplexes. Then $X\setminus W$ is again a simplicial complex and
$(A\setminus W, B\setminus W)$ is  a decomposition of $X\setminus W$ into closed {subsets}. 

Note that in the terms $F(X\setminus W)$ in 
\eqref{fwerfoio23r23r} we must equip the set $X\setminus W$ with the uniform bornological coarse structures induced from $X$. 
The uniform  structure on $X\setminus W$ is also induced from the path-metric of $X\setminus W$, but this is in general not true for the coarse structure. Since $F$ is invariant under coarsening, we can, without changing the  value of $F$ on the spaces $X\setminus W$,  equip these spaces with the smaller coarse structures associated to  the intrinsic path metrics.

Using Example \ref{wfiofwefewfwefewf} we now see that excisiveness
of $F$ in the sense of Definition \ref{reffoiweffewff} can be applied to
the decompositions  $(A\setminus W,B\setminus W)$    of the complexes  $X\setminus W$ occuring in the limit \eqref{fwerfoio23r23r}.  We therefore have expressed the square \eqref{xqwxiuhiuxqx}   as a limit of  cofibres of maps of cocartesian squares, i.e., as a limit of cocartesian squares.
Since $\bC$ is stable, cartesian and cocartesian squares in $\bC$ are the same. 
Hence  \eqref{xqwxiuhiuxqx}  itself is a  cocartesian square.
\end{proof}

\begin{rem}\label{rem_open_excision_subcomplexes}
If $F$ is homotopy invariant, is invariant under coarsening,  and satisfies open excision  in the sense   of Definition \ref{reffoiweffewff},  then  a modified argument shows that also in this case  the square \eqref{xqwxiuhiuxqx} is a push-out provided we restrict to decompositions $(A,B)$ of $X$ into subcomplexes.
\hB
\end{rem}

Let $F \colon \UBC\to \bC$ be a functor and assume that $F$ is excisive (in any of the senses discussed above).
If $X$ is a  \ubs {with the discrete uniform and coarse structures} and $x$ is a point in $X$, then analogously as in  \eqref{qewfkhuihuihuihiuj} we have a natural projection morphism $F(X)\to F(\{x\})$.

\begin{ddd}\label{greoiiogergerger}
$F$ is called additive if for every \ubs $X$ with the discrete uniform and coarse structures and the minimal bornology the natural morphism
$$F(X)\to \prod_{x\in X} F(\{x\})$$
induced by the projections is an equivalence.
\end{ddd}

 The product exists by our standing assumption for $\bC$ in this section.

{Let $F\colon \UBC \to \bC$ be a functor.
\begin{lem}\label{lem_locally_finite_excisive}
If $F$ is excisive (in any of the senses discussed above), then $F^{\lf}$ is additive.
\end{lem}
\begin{proof}The argument is 
completely analogous to the proof of \cite[Lem.~7.30]{buen}.
\end{proof}
}

Let $F:\UBC\to \bC$ be a functor.
\begin{lem}\label{feuhweifwfewwf}
Assume:
\begin{enumerate}
\item\label{roifjfoerfwerfwrefwerfw} $F$ satisfies closed excision in the sense   of Definition \ref{reffoiweffewff}.
\item $F$ is homotopy invariant.
\item $F$ invariant under coarsening.
\item  $F$ is   additive.
\end{enumerate}
Then for every   locally finite, finite-dimensional simplicial complex $X$ equipped with the metric  structures the natural morphism
$F(X)\to F^{\lf}(X)$ is an equivalence.  
\end{lem}

\begin{proof}
We argue by a finite induction over the dimension.

Assume that $X$ is zero-dimensional. Then $X$ is discrete  as a uniform and coarse space and has the minimal bornology. 
The functor $F$ is additive by assumption, and the functor $F^{\lf}$ is additive by {Lemma~\ref{lem_locally_finite_excisive}.} 
Since  $F(*)\stackrel{\simeq}{\to} F^{\lf}(*)$ we can conclude that $F(X)\stackrel{\simeq}{\to} F^{\lf}(X)$.

In the higher-dimensional case we use that local finiteness of $X$ implies that its bornology is generated by the finite subcomplexes.
 
 Assume now that the assertion is true for complexes of dimension $n-1$.
If the complex $X$ is $n$-dimensional, then we can decompose $X$ into a closed tubular neighbourhood $Y$ of thickness $1/3$ of its $(n-1)$-skeleton
and a disjoint union $Z$ of $n$-simplices of size $2/3$ (see the picture on Page~\pageref{fig543er}). The intersection is then a disjoint union of tubular neighbourhoods of thickness $1/3$ of the boundaries of simplices of size $2/3$.

\begin{figure}[ht]
\centering\includegraphics[scale=0.40]{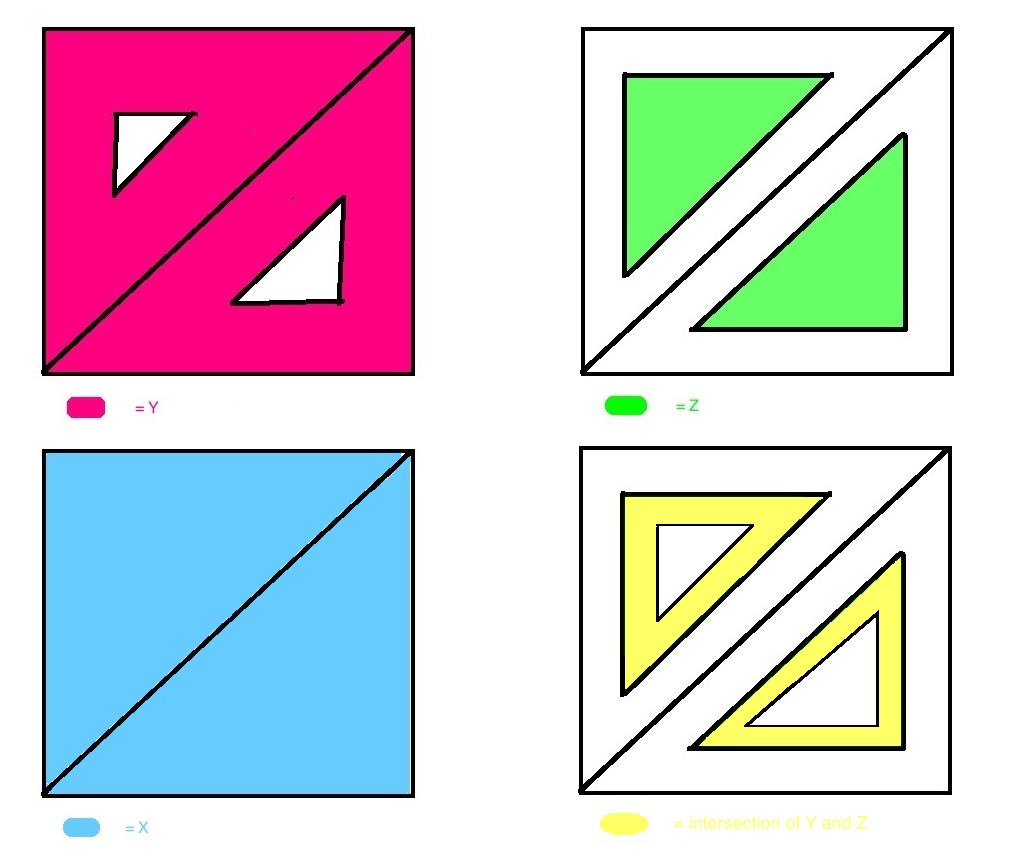}
\caption*{Decomposition $X=Y\cup Z$ used in the proof of Lemma~\ref{feuhweifwfewwf}}
\label{fig543er}
\end{figure}

This closed decomposition is coarsely and uniformly excisive. Hence we can apply excision for $F$ in the sense of Definition \ref{reffoiweffewff}. For $F^{\lf}$ we use Lemma \ref{eoigepggrgregegerg}.

We use homotopy invariance in order to replace the evaluation on {$Y$} by the evaluation on the $(n-1)$-skeleton $X^{n-1}$ itself. Furthermore, we can contract the $n$-simplices {of size $2/3$ in $Z$} to the set $C$ of their centers. Finally, we contract {$Y \cap Z$} to the {set $W$ of the boundaries of these simplices of size $2/3$.}

We use invariance under coarsening (note that $F^{\lf}$ is also invariant under coarsening {by Lemma~\ref{lem_locally_finite_invariant_coarsening}}) in order to replace the induced coarse structures by the coarse structures induced by the intrinsic path-quasi-metric on the $(n-1)$-skeleton $X^{n-1}$ and on $W$ and the discrete coarse structure on the set $C$ of centers of $n$-simplices. The bornology on $C$ induced from $X$ is the minimal one.

Then we can apply the induction assumption
to $X^{n-1}$, $W$ (which is also $n-1$-dimensional) and $C$.
\end{proof}

\begin{rem}\label{jknerinwefsd}
In Assumption~\ref{roifjfoerfwerfwrefwerfw} of Lemma~\ref{feuhweifwfewwf} one could replace ``closed'' by ``open'' without changing the conclusion. The argument must be slightly modified {by using the corresponding open versions of the decompositions in the induction steps.}
\hB
\end{rem}

%

Let $E \colon \BC\to \bC$ be a coarse homology theory such that $\bC$ is complete.
Recall Definition~\ref{eroigjoerggerferwef} of the notion of additivity and 
note that additivity follows from strong additivity \cite[Def.~3.12]{equicoarse}.

%


\begin{prop}\label{fiuwofewfewfewf}
If $E$ is additive and $X$ is a   locally finite, finite-dimensional  simplicial complex equipped with the metric structures, then the natural morphism
$$E\cO^{\infty}(X)\to (E\cO^{\infty})^{\lf}(X)$$
is an equivalence.
\end{prop}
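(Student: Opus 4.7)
I would prove this by applying Lemma \ref{feuhweifwfewwf} to the functor $F := E\cO^{\infty}$, regarded as a functor $\UBC^{small}\to \bC$ by restriction, and verifying its four hypotheses. Three of them would follow from properties already established for $\cO^{\infty}$ and transported through the colimit-preserving functor $E:\Sp\cX\to \bC$: excision in the sense of Definition \ref{reffoiweffewff} and homotopy invariance of $\cO^{\infty}$ are the content of Lemma \ref{foprgfregegr}, while invariance under coarsening is recorded in Example \ref{fiuiowfewfewf}. Since postcomposition with $E$ preserves cocartesian squares and equivalences, all three properties descend to $E\cO^{\infty}$ on $\UBC^{small}$.

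The crux of the plan is the verification of additivity in the sense of Definition \ref{greoiiogergerger}. Let $X$ be a small \ubs equipped with the discrete uniform and coarse structures and the minimal bornology; smallness (and hence separability) forces $X$ to be a countable discrete set. By \cite[Prop.~9.35]{equicoarse}, the boundary map of the cone sequence \eqref{gerpogikrepoergerg} induces an equivalence
\[\cO^{\infty}(X)\xrightarrow{\simeq}\Sigma \Yo^{s}(F_{\cU}(X))\ ,\]
and the same result applied to a one-point space yields $\cO^{\infty}(\{x\})\simeq \Sigma \Yo^{s}(\{x\})$ for each $x\in X$. Applying $E$ one obtains $E\cO^{\infty}(X)\simeq \Sigma E(F_{\cU}(X))$ and $E\cO^{\infty}(\{x\})\simeq \Sigma E(\{x\})$. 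Since $F_{\cU}(X)$ is isomorphic to $\nat_{min,min}$ as a bornological coarse space, the countable additivity of $E$ gives $E(F_{\cU}(X))\simeq \prod_{x\in X} E(\{x\})$. Using that $\Sigma$ commutes with products in the stable $\infty$-category $\bC$, the chain
\[E\cO^{\infty}(X)\simeq \Sigma E(F_{\cU}(X))\simeq \prod_{x\in X}\Sigma E(\{x\})\simeq \prod_{x\in X} E\cO^{\infty}(\{x\})\]
exhibits $E\cO^{\infty}$ as additive. Lemma \ref{feuhweifwfewwf} then applies to $E\cO^{\infty}$ and produces the asserted equivalence $E\cO^{\infty}(X)\to (E\cO^{\infty})^{\lf}(X)$ for every countable, locally finite, finite-dimensional simplicial complex $X$.

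The main subtlety I expect is the additivity step, since it is the only place where a non-formal input enters (the collapse of the cone sequence on discrete spaces from \cite[Prop.~9.35]{equicoarse}) together with the countable additivity hypothesis on $E$. The remaining three hypotheses of Lemma \ref{feuhweifwfewwf} are essentially formal propagations through the colimit-preserving extension of $E$, and the reduction to Lemma \ref{feuhweifwfewwf} itself is mechanical once additivity is secured.
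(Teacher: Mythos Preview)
Your proposal is correct and follows essentially the same route as the paper: reduce to Lemma \ref{feuhweifwfewwf}, deduce excision, homotopy invariance and invariance under coarsening of $E\cO^{\infty}$ from Lemma \ref{foprgfregegr} and Example \ref{fiuiowfewfewf}, and verify additivity via the collapse $\cO^{\infty}(X)\simeq \Sigma\Yo^{s}(F_{\cU}(X))$ for discrete $X$ together with countable additivity of $E$ and the fact that $\Sigma$ commutes with products. The only cosmetic slip is the claim that $F_{\cU}(X)$ is isomorphic to $\nat_{min,min}$: the set $X$ may be finite, but in that case additivity is trivial, so this does not affect the argument.
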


\begin{proof}
We will check that the assumptions of Lemma~\ref{feuhweifwfewwf} are satisfied. 
By Lemma \ref{foprgfregegr} the functor $\cO^{\infty} $ satisfies excision  in the sense   of Definition \ref{reffoiweffewff} and is homotopy invariant. Hence $E\cO^{\infty}$ has these properties. Further, by Example \ref{fiuiowfewfewf} the functor $E\cO^{\infty}$ is invariant under coarsening.
 
Let $X$ be a uniform bornological coarse space {which is discrete both as a uniform and as a coarse space.} Then
$$\cO^{\infty}(X)\simeq \Sigma \Yo^{s}(F_{\cU}(X))$$ 
by \cite[Prop.~9.33]{equicoarse}. Using that $E$ is  additive at the marked equivalence {in the following chain of equivalences, we have for a  \ubs $X$ with the discrete uniform and coarse structures and the minimal bornology 
}
\begin{eqnarray*}
E\cO^{\infty}(X) & \simeq& E( \Sigma F_{\cU}(X))
  \simeq  \Sigma E(F_{\cU}(X)) 
  \stackrel{!}{\simeq}  \Sigma\big(\prod_{x\in X} E(\{x\})\big)\\
& \simeq& \prod_{x\in X} \Sigma E(\{x\}) 
 \simeq  \prod_{x\in X}E\cO^{\infty}(\{x\})
\end{eqnarray*}
showing that $E\cO^{\infty}$ is additive.  
\end{proof}

Following Weiss--Williams \cite{ww_pro}, for a homotopy invariant functor $F:\UBC \to \bC$ we can construct a  best approximation of $F$ by a homology theory. It is given by the Kan extension procedure described in  the proof of \cite[Prop.~7.43]{buen} which produces a functor and a natural transformation
$$F^{\%}\colon \UBC\to \bC\, , \quad F^{\%}\to F\, .$$ 
Here are the details. We assume that $\bC$ is  stable, complete and cocomplete.
Let $\UBC^{\Delta}$ be the category of pairs $(X,\sigma)$, where $X$ is in $\UBC$ and $\sigma:\Delta^{n}\to X$ is a continuous map.
On $\Delta^{n}$ we consider the uniform bornological coarse structure induced by the spherical metric. Then $\sigma$ is automatically a morphism in $\UBC$. A morphism $(X,\sigma)\to (X',\sigma')$ is a commutative diagram
$$\xymatrix{\Delta^{n}\ar[r]^{\phi}\ar[d]^{\sigma}&\Delta^{n'}\ar[d]^{\sigma'}\\X\ar[r]^{f}&X'}$$
where $f$ is a morphism in $\UBC$ and $\phi$ is induced by a morphism $[n]\to [n']$ in the category $\mathbf{\Delta}$.
We have functors $$p,q \colon \UBC^{\Delta}\to \UBC\, , \quad\quad  p(X,\sigma) \coloneqq  X\, , \quad q(X,\sigma) \coloneqq \Delta^{n}\, .$$
\begin{ddd} Then we define $F^{\%}$ by a left Kan-extension of $F\circ q$ along $p$:
\[
\xymatrix{\UBC^{\Delta}\ar[rr]^{F\circ q}\ar[d]^{p}&&\bC\,.\\
\UBC\ar[urr]^{F^{\%}}&}
\]
\end{ddd}

The objectwise formula for the left Kan extension yields the following formula for the values of $F^{\%}$:
$$F^{\%}(X)  \coloneqq  \colim_{(\Delta^n \to X)} F(\Delta^n)\, ,$$
where the colimit runs over the category of simplices of $X$. 

Since $\bC$ is stable and cocomplete,    it is tensored over the small category $\Sp^{\sm}$ of very small spectra.
We have a suspension spectrum functor $\Sigma_{+}^{\infty} \colon \Top\to \Sp^{\sm}$, and we denote by $\cF_{\cC,\cU/2,\cB} \colon \UBC\to \Top$ the canonical forgetful functor.

\begin{lem}\label{qriofrefqewewfqf}
We have an equivalence of functors
\begin{equation}\label{fjo3ifjoi34fj34f34f4f}
F^{\%}(-)\simeq (  F(*) \wedge \Sigma^{\infty}_{+}(- ))\circ \cF_{\cC,\cU/2,\cB}
\end{equation}
from $\UBC$ to $\bC$.
In particular, $F^{\%}$ is homotopy invariant and satisfies open excision.
\end{lem}

\begin{proof}
 Since $F$ is homotopy invariant,  the projection $\Delta^{n}\to *$ induces an equivalence
 $$F\circ q \to \const(F(*))\simeq  \const(F(*) \wedge \Sigma^{\infty}_{+}(*) ) \, .$$
 Using the  equivalence $ \colim_{(\Delta^n \to X)} \Sigma^{\infty}_{+}(*)\simeq \Sigma^{\infty}_{+}(X)$ (which is natural in $X$) and the fact that $\wedge$ commutes with colimits, we get  the equivalence \eqref{fjo3ifjoi34fj34f34f4f}.
  
The functor $\Sigma^{\infty}_{+}(-):\Top\to \Sp^{sm}$ is homotopy invariant and satisfies open excision.
This implies that the functor $F^{\%}$ is homotopy invariant and satisfies open excision.
\end{proof}

\begin{kor}\label{ergoijqrofirfweqwddq}
The functor  $(F^{\%})^{\lf} \colon \UBC \to \bC$ is an open local homology theory, it is additive, {and it is invariant under coarsening.}
\end{kor}

\begin{proof}
We let $ (F(*)\wedge \Sigma_{+}^{\infty})^{\lf} \colon \TopBorn\to \bC$ be the open local homology theory associated to the object $F(*)$ of $\bC$ \cite[Ex.~7.40]{buen}. 
By Lemma~\ref{qriofrefqewewfqf} we have 
\begin{equation}
\label{eq_formula_lf_point}
(F^{\%})^{\lf}\simeq (F(*)\wedge \Sigma_{+}^{\infty})^{\lf}\circ \cF_{\cC,\cU/2}\,.
\end{equation}
  The right-hand side is an open local homology theory by Lemma~\ref{fiweofwefewfewfewf}. 
Additivity of $(F^{\%})^{\lf}$ follows {from Lemma~\ref{lem_locally_finite_excisive} since $F^{\%}$ satisfies excision by Lemma~\ref{qriofrefqewewfqf}.}
Finally, by~\eqref{eq_formula_lf_point} the functor $(F^{\%})^{\lf}$ is completely independent of the coarse structure and hence in particular invariant under coarsening.
\end{proof}

%


\begin{lem}\label{egiojerogrgreg}
If $F$ satisfies the assumptions stated in Lemma \ref{feuhweifwfewwf},  and $X$ is a countable, locally finite, finite-dimensional  simplicial complex equipped with the metric structures, then the natural morphism
$$(F^{\%})^{\lf}(X)\to F^{\lf}(X)$$
is an equivalence.
\end{lem}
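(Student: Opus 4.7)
The plan is to proceed by induction on the dimension $n$ of $X$, essentially mirroring the strategy of Lemma~\ref{feuhweifwfewwf} while comparing the two functors cell-by-cell. The natural transformation $F^{\%}\to F$ induces $(F^{\%})^{\lf}\to F^{\lf}$ by functoriality of the locally finite construction, and this is the map we need to show is an equivalence.

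First, I would gather preliminary properties of $(F^{\%})^{\lf}$. By \eqref{fjo3ifjoi34fj34f34f4f} the functor $F^{\%}$ factors through $F_{\cC,\cU/2}$, so Remark~\ref{efiowefwefewfefew} applies and $(F^{\%})^{\lf}$ does as well. In particular, $(F^{\%})^{\lf}$ is invariant under coarsening (Definition~\ref{gfiorjgoergre345}), homotopy invariant, satisfies open excision, and is countably additive on discrete countable spaces by \cite[Lem.~6.63]{buen}. Moreover, for any decomposition $(A,B)$ of a simplicial complex into closed sub-complexes, one can thicken $A$ and $B$ to a homotopy equivalent open cover and combine open excision with homotopy invariance to conclude that $F^{\%}$ satisfies the Mayer--Vietoris property for $(A,B)$. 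Inspecting the proof of Lemma~\ref{eoigepggrgregegerg}, which only uses excision for decompositions into closed sub-complexes of simplicial complexes, the same argument now yields a push-out square for $(F^{\%})^{\lf}$ along any such decomposition.

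For the base case $n=0$, the space $X$ is a countable discrete simplicial complex. Lemma~\ref{feuhweifwfewwf} together with additivity of $F$ gives $F^{\lf}(X)\simeq F(X)\simeq \prod_{x\in X} F(\ast)$. On the other side, countable additivity of $(F^{\%})^{\lf}$ together with $(F^{\%})^{\lf}(\{x\})\simeq F^{\%}(\{x\})\simeq F(\ast)$ (using Lemma~\ref{rgioregerg34535} and the formula~\eqref{fjo3ifjoi34fj34f34f4f}) yields $(F^{\%})^{\lf}(X)\simeq \prod_{x\in X} F(\ast)$, and the comparison map is the identification of these products. For the inductive step, assume the claim for dimension $<n$ and let $X$ be $n$-dimensional. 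Adopt the decomposition $X=Y\cup Z$ from the proof of Lemma~\ref{feuhweifwfewwf}, where $Y$ is the closed $1/3$-neighborhood of the $(n-1)$-skeleton and $Z$ is the disjoint union of $2/3$-sized sub-simplices of the $n$-cells; this decomposition is coarsely and uniformly excisive. Applying Lemma~\ref{eoigepggrgregegerg} to $F$ and its analogue above to $F^{\%}$ produces a map of push-out squares relating $(F^{\%})^{\lf}$ and $F^{\lf}$ on $Y\cap Z$, $Y$, $Z$, and $X$. Homotopy invariance contracts $Y$ onto the $(n-1)$-skeleton $X^{n-1}$, each $n$-simplex of $Z$ onto the set $C$ of its center, and $Y\cap Z$ onto the set $W$ of boundaries of these sub-simplices. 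Invariance under coarsening (applied to both functors) replaces the induced coarse structures by the intrinsic path-metric structures, turning $X^{n-1}$, $C$, and $W$ into countable, locally finite simplicial complexes of dimension at most $n-1$. The inductive hypothesis supplies equivalences on these three pieces, and since the map of cocartesian squares is an equivalence on all three source corners, it is an equivalence at $X$.

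The main obstacle is the verification that $(F^{\%})^{\lf}$ satisfies the Mayer--Vietoris property needed to invoke Lemma~\ref{eoigepggrgregegerg}, because $F^{\%}$ only satisfies open excision a priori and the decompositions used in the inductive step are closed. Carrying out the open-thickening argument carefully inside $\UBC^{small}$, so as to obtain genuine homotopy equivalences in that category between the closed sub-complexes and their open neighbourhoods, is the one place where some care is required; once this is done, the rest of the proof is a direct transcription of the strategy of Lemma~\ref{feuhweifwfewwf}.
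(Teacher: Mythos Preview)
Your proposal is correct and follows essentially the same approach as the paper: both run the induction of Lemma~\ref{feuhweifwfewwf} for the comparison map, after first checking that $(F^{\%})^{\lf}$ is invariant under coarsening and satisfies excision for decompositions of simplicial complexes into closed sub-complexes. The paper's proof simply asserts these two facts and refers back to Lemma~\ref{feuhweifwfewwf}; your version is more explicit, in particular about deriving closed excision from open excision plus homotopy invariance via open thickenings, which is the right way to justify the point the paper leaves implicit.
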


\begin{proof}
The argument is the same as for Lemma~\ref{feuhweifwfewwf}.
{It is an induction over the dimension of $X$. In the zero-dimensional case we just need additivity of $F^{\lf}$ and $(F^{\%})^{\lf}$. Additivity of $F^{\lf}$ follows from Lemma~\ref{lem_locally_finite_excisive} and additivity of $(F^{\%})^{\lf}$ was shown in Corollary~\ref{ergoijqrofirfweqwddq}.
}

{For the induction step we need that the functors $(F^{\%})^{\lf}$ and $F^{\lf}$ are homotopy invariant, invariant under coarsenings and excisive for the decompositions of simplicial complexes as depicted on Page~\pageref{fig543er}. That $F^{\lf}$ has these properties was already explained in the proof of Lemma~\ref{feuhweifwfewwf}.}

Homotopy invariance and invariance under coarsening of $(F^{\%})^{\lf}$ is shown in Corollary~\ref{ergoijqrofirfweqwddq}. Only the required excisiveness of $(F^{\%})^{\lf}$ is not completely obvious: here we have to use homotopy invariance of $(F^{\%})^{\lf}$ to transform the closed decompositions into open ones.
\end{proof}

\begin{rem}
The conclusion of Lemma~\ref{egiojerogrgreg} is also true if we assume that $F$ satisfies open excision instead of closed excision. In this case we have to slightly modify the proof {similarly as in} Remark~\ref{jknerinwefsd}.
\hB
\end{rem}

Let $E\colon \BC \to \bC$ be a coarse homology theory, and let
$X$ be a uniform bornological coarse space.
In the following we omit to write the forgetful functor $\cF_{\cC,\cU/2,\cB}$ in front of $\Sigma^{\infty}_{+}$ in order to simplify the notation.
\begin{prop}\label{fifowefweewfwef}
Assume:
\begin{enumerate}
\item $\bC$ is complete.
\item $E$ is additive.
\item $X$ is homotopy equivalent in $\UBC$ to a   locally finite, finite-dimensional simplicial complex equipped with the metric structures.
\end{enumerate}
Then we have an equivalence
$$(\Sigma E(*)\wedge \Sigma^{\infty}_{+})^{\lf}( X)\simeq
E\cO^{\infty}(X)\, .$$
\end{prop}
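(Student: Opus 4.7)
The plan is to assemble the statement from Proposition~\ref{fiuwofewfewfewf}, Lemma~\ref{egiojerogrgreg}, and the explicit computation of $E\cO^{\infty}$ on a point. Both sides of the claimed equivalence are homotopy invariant functors of $X$ in $\UBC$: the right-hand side by Lemma~\ref{foprgfregegr} applied to the strong parts of the cone sequence, and the left-hand side because $\Sigma E(*)\wedge\Sigma^{\infty}_{+}$ is a homotopy invariant functor and forming the locally finite evaluation preserves homotopy invariance. Hence it suffices to treat the case that $X$ itself is a countable, locally finite, finite-dimensional simplicial complex, which is a small uniform bornological coarse space.

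Next I would verify that the functor $F:=E\cO^{\infty}_{|\UBC^{small}}:\UBC^{small}\to\bC$ satisfies the hypotheses of Lemma~\ref{feuhweifwfewwf}. Excision and homotopy invariance are inherited from Lemma~\ref{foprgfregegr}; invariance under coarsening follows from Example~\ref{fiuiowfewfewf}; and additivity is exactly the content of the equivalence chain displayed at the end of the proof of Proposition~\ref{fiuwofewfewfewf}, where countable additivity of $E$ is used together with the identification $\cO^{\infty}(Y)\simeq\Sigma\Yo^{s}(F_{\cU}(Y))$ for $Y$ discrete. By Proposition~\ref{fiuwofewfewfewf} we therefore have
\[ E\cO^{\infty}(X)\simeq (E\cO^{\infty})^{\lf}(X)\ , \]
and by Lemma~\ref{egiojerogrgreg} applied to $F=E\cO^{\infty}$ we further have
\[ (E\cO^{\infty})^{\lf}(X)\simeq ((E\cO^{\infty})^{\%})^{\lf}(X)\ . \]

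It remains to identify $(E\cO^{\infty})^{\%}$. By the formula \eqref{fjo3ifjoi34fj34f34f4f} we have
\[ (E\cO^{\infty})^{\%}(Y)\simeq \Sigma^{\infty}_{+}(Y)\wedge E\cO^{\infty}(*) \]
for all small $Y$, and evaluating the cone sequence \eqref{gerpogikrepoergerg} at the point shows that $E\cO^{\infty}(*)\simeq \Sigma E(*)$, because $\cO(*)$ is weakly flasque (so $E\cO(*)\simeq 0$) and $E(F_{\cU}(*))\simeq E(*)$. Substituting and taking locally finite evaluation produces
\[ ((E\cO^{\infty})^{\%})^{\lf}(X)\simeq (\Sigma E(*)\wedge\Sigma^{\infty}_{+})^{\lf}(X)\ , \]
and composing these equivalences yields the desired identification. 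Naturality in $X$ is clear since each step is natural in $X$ in $\UBC^{small}$ (and the reduction to simplicial complexes is via a chosen homotopy equivalence).

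The main subtle point, and the only place where real work is hidden, is the verification that $F=E\cO^{\infty}$ is additive in the sense of Definition~\ref{greoiiogergerger}; this is where countable additivity of $E$ enters essentially, and it is precisely where our finite-dimensional, countable, locally finite hypothesis on the simplicial model of $X$ is used in order to keep the inductive excision argument of Lemma~\ref{feuhweifwfewwf} under control. All other steps are formal consequences of the properties of $\cO^{\infty}$ established in the preceding sections.
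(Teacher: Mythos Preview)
Your argument follows exactly the same route as the paper's proof: combine Proposition~\ref{fiuwofewfewfewf} and Lemma~\ref{egiojerogrgreg} for $F=E\cO^{\infty}$, together with the identification $E\cO^{\infty}(*)\simeq\Sigma E(*)$. One minor correction: your justification that $E\cO(*)\simeq 0$ because $\cO(*)$ is weakly flasque would require $E$ to be strong, which is not among the hypotheses here; instead use the motivic equivalence $\cO^{\infty}(*)\simeq\Sigma\Yo^{s}(*)$ (the case of a point in \cite[Prop.~9.33]{equicoarse}, as already invoked in the proof of Proposition~\ref{fiuwofewfewfewf}), which gives $E\cO^{\infty}(*)\simeq\Sigma E(*)$ for any coarse homology theory $E$.
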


\begin{proof}
We first observe that $\Sigma E(*)\simeq E\cO^{\infty}(*)$.
Then we just combine Proposition \ref{fiuwofewfewfewf}, Lemma  \ref{egiojerogrgreg} with $F \coloneqq E\cO^{\infty}$, and \eqref{fjo3ifjoi34fj34f34f4f}.
\end{proof}

Note that the functor
$X\mapsto (\Sigma E(*)\wedge \Sigma^{\infty}_{+})^{\lf}(X)$ is naturally defined on $\Top\Born$ and is locally finite, homotopy invariant (in the sense of $\TopBorn$, i.e., for proper homotopies {which are not necessarily uniform}), and satisfies open excision. Therefore the functor
$X\mapsto E\cO^{\infty}(X)$ for spaces $X$ in $\UBC$ (which are   homotopy equivalent in the sense of $\UBC$   to a  locally finite, finite-dimensional simplicial complexes equipped with the metric structures), also has these stronger homological properties. In particular:

Let $E \colon \BC \to \bC$ be a coarse homology theory. Furthermore,
let $X,X^{\prime}$ be in  $\UBC$ and $f,g \colon X\to X^{\prime}$ be morphisms in 
 $\UBC$.
\begin{kor}\label{rgigoeri34345345345}
Assume:
\begin{enumerate}
\item $\bC$ is complete.
\item $E$ is additive.
\item $X$ and $X^{\prime}$ are homotopy   equivalent in $\UBC$ to locally finite and finite-dimensional simplicial complexes equipped with the metric structures.
\item $F_{\cC,\cU/2}(f)$ and $F_{\cC,\cU/2}(g)$  are properly homotopic (there is a homotopy $[0,1]\times X\to X^{\prime}$ which is continuous and proper after forgetting the coarse and uniform structures).
\end{enumerate}
Then $E\cO^{\infty}(f)$ is equivalent to $E\cO^{\infty}(g)$.
\end{kor}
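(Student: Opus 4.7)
The plan is to transfer the problem, via Proposition~\ref{fifowefweewfwef}, to one about a locally finite homology theory on $\TopBorn$ and then invoke proper-homotopy invariance there. The target of the equivalence in that proposition is the value at $F_{\cC,\cU/2}(X)$ of the functor $(\Sigma E(*)\wedge \Sigma^{\infty}_{+})^{\lf}$, which by Remark~\ref{efiowefwefewfefew} and \cite[Ex.~6.56]{buen} comes from a locally finite homology theory on $\TopBorn$; such theories are automatically invariant under proper homotopies.

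First I would reduce to the case where $X$ and $X^{\prime}$ are themselves countable, locally finite, finite-dimensional simplicial complexes $K,K^{\prime}$. Choose $\UBC$-homotopy equivalences $\alpha:K\to X$ and $\alpha^{\prime}:K^{\prime}\to X^{\prime}$ with $\UBC$-homotopy inverses $\beta,\beta^{\prime}$, and set $\tilde f:=\beta^{\prime}\circ f\circ \alpha$ and $\tilde g:=\beta^{\prime}\circ g\circ \alpha$. By $\UBC$-homotopy invariance of $E\cO^{\infty}$ (Lemma~\ref{foprgfregegr}), the morphisms $E\cO^{\infty}(\alpha)$ and $E\cO^{\infty}(\beta^{\prime})$ are equivalences. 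Moreover $F_{\cC,\cU/2}(\tilde f)$ and $F_{\cC,\cU/2}(\tilde g)$ remain properly homotopic, because proper maps compose with proper homotopies to yield proper homotopies. Hence it suffices to show $E\cO^{\infty}(\tilde f)\simeq E\cO^{\infty}(\tilde g)$ as morphisms $E\cO^{\infty}(K)\to E\cO^{\infty}(K^{\prime})$.

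Since $K$ and $K^{\prime}$ satisfy the hypotheses of Proposition~\ref{fifowefweewfwef}, the natural equivalence furnished there identifies $E\cO^{\infty}(\tilde h)$, for $\tilde h\in\{\tilde f,\tilde g\}$, with the morphism induced by $F_{\cC,\cU/2}(\tilde h)$ on $(\Sigma E(*)\wedge \Sigma^{\infty}_{+})^{\lf}$. The latter morphism depends only on the proper-homotopy class of $F_{\cC,\cU/2}(\tilde h)$, so the two morphisms coincide, and transporting back along the equivalence of Proposition~\ref{fifowefweewfwef} yields the desired identification $E\cO^{\infty}(\tilde f)\simeq E\cO^{\infty}(\tilde g)$.

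The one subtle point is checking that Proposition~\ref{fifowefweewfwef} provides an equivalence of functors (on the subcategory of $\UBC^{small}$ spanned by countable, locally finite, finite-dimensional simplicial complexes), not merely a pointwise equivalence. Inspecting the proof of that proposition, the equivalence is assembled from the tautologically natural comparison morphism $F\to F^{\lf}$ of Definition~\ref{fwjeofiewffewwfewfw}, the Weiss--Williams approximation $F^{\%}\to F$, and the inductive additivity reductions of Lemma~\ref{feuhweifwfewwf}; each step is natural in the simplicial complex $K$, so the required naturality should amount to routine bookkeeping rather than presenting a genuine obstacle.
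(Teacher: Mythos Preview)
Your proposal is correct and follows essentially the same approach as the paper: the corollary is deduced directly from the natural equivalence of Proposition~\ref{fifowefweewfwef}, which identifies $E\cO^{\infty}$ on nice complexes with a locally finite homology theory on $\TopBorn$ that is invariant under proper homotopies. The paper leaves the reduction to simplicial complexes and the naturality verification implicit, whereas you spell these out explicitly; both are routine, as you note.
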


\section{Comparison of coarse homology theories}

In ordinary homotopy theory a transformation between spectrum-valued homology theories which induces an equivalence on a point is an equivalence at least on all $CW$-complexes. In the present section we consider an analogous statement for coarse homology theories.

 Assume that we have a transformation $E \to E^\prime$ of $\bC$-valued coarse homology theories  which induces an equivalence $E(\ast) \to E^\prime(\ast)$. In this section we provide sufficient conditions on a  bornological coarse space $X$ and on the theories $E$ and $E^\prime$ which imply that $E(X) \to E^\prime(X)$ is   an equivalence. The main result is formulated in Theorem~\ref{fewoihjowefwfewfwef}.

Let $X$ be a bornological coarse space.
The following definitions are from \cite[Def.~7.75]{buen}, \cite[Def.~7.77]{buen}.

\begin{ddd}\mbox{}
\begin{enumerate}
\item $X$ has strongly   bounded geometry if it has the minimal bornology compatible with the coarse structure and for every coarse entourage $U$ of $X$  the number of points in $U$-bounded subsets of $X$ is uniformly bounded.
\item $X$ has bounded geometry if it is   equivalent to a bornological coarse space with strongly  bounded geometry.
\end{enumerate}
\end{ddd}

Let $X$ be a bornological coarse space, and let $E \colon \BC\to \bC$ be a strong 
coarse homology theory.
\begin{prop}\label{rgriugeroigregregerg}
Assume:
\begin{enumerate}
\item $\bC$ is complete.
\item\label{ergioergwerfrefwefwerf} $E$ is additive.
\item $X$ has 
bounded geometry.
\end{enumerate}

Then we have an equivalence
$$((\Sigma E(*)\wedge \Sigma^{\infty}_{+})^{\lf}\circ F_{\cC,\cU/2})\bP^{o}(X)\simeq E\cO^{\infty}\bP(X)\, .$$
\end{prop}

\begin{proof}
Since $(\Sigma E(*)\wedge \Sigma^{\infty}_{+})^{\lf}\circ F_{\cC,\cU/2}$ is  by  Lemma \ref{fiweofwefewfewfewf}    an open local homology theory, it can be composed with the open version $\bP^{o}$.
Since both sides of the equivalence are coarsely invariant we can assume that $X$ is a    bornological coarse space of   strongly   
 bounded geometry.  Then for every entourage $U$ of $X$ the complex
$P_{U}(X)$ is a   locally finite, finite-dimensional simplicial complex. 
Hence by Proposition \ref{fifowefweewfwef} we get an equivalence 
$$(\Sigma E(*)\wedge \Sigma^{\infty}_{+})^{\lf}(
P_{U}(X)
)\simeq E\cO^{\infty}(P_{U}(X))\, .$$
Forming the colimit over the entourages $U$ of $X$  and using \eqref{oirjeorgergregr}  and its open version we get the claimed equivalence
\[
((\Sigma E(*)\wedge \Sigma^{\infty}_{+})^{\lf}\circ F_{\cC,\cU/2})\bP^{o}(X)\simeq E\cO^{\infty}\bP^{o}(X)\stackrel{\text{Lem.}~\ref{vweoijiorjovbwvdfsv}}{\simeq} E\cO^{\infty}\bP(X)\,.
\]
In the second equivalence   Lemma \ref{vweoijiorjovbwvdfsv} can be applied (with $E\cO^{\infty}$ in place of $E$) since $E\cO^{\infty}$ is a closed and open local homology theory
by Lemma \ref{fewoiiofwefewf} and since $E$ is strong.
\end{proof}


Let $E\to E^{\prime}$ be a transformation between strong $\bC$-valued coarse homology theories, and let $X$ be a bornological coarse space.

\begin{theorem}\label{fewoihjowefwfewfwef}
Assume:
\begin{enumerate}
\item $\bC$ is complete.
\item $E$ and $E^{\prime}$ are additive.
\item \label{iuheiwofwefewfew}$E(*)\to E^{\prime}(*)$ is an equivalence.
\item $X$ is of bounded geometry.
\item\label{fwu28r224f} The coarse assembly maps $\mu_{E,X}$ and $\mu_{E^{\prime},X}$ are equivalences {(Definition~\ref{fiwjfofewfewfwefefweffw}).}
\end{enumerate}
Then $E(X)\to E^{\prime}(X)$ is an equivalence.
\end{theorem}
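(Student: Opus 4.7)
The strategy is to reduce the equivalence $E(X)\to E'(X)$, via the assembly maps, to the equivalence on the domain side, where the computation of the domain in terms of the coefficients $E(\ast),E'(\ast)$ from Proposition~\ref{rgriugeroigregregerg} makes the comparison immediate.

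First, naturality of the cone sequence \eqref{gerpogikrepoergerg} and of the universal coarsification $\bP$ yields, for the given transformation $E\to E'$, a commuting square
\[\xymatrix{
E\cO^{\infty}\bP(X)\ar[r]^-{\mu_{E,X}}\ar[d] & \Sigma E(X)\ar[d]\\
E'\cO^{\infty}\bP(X)\ar[r]^-{\mu_{E',X}} & \Sigma E'(X)
}\]
in $\bC$. By assumption~\ref{fewoihjowefwfewfwef}.\ref{fwu28r224f} the two horizontal maps are equivalences, so it suffices to show that the left vertical map is an equivalence; then the right vertical map is an equivalence, and hence $E(X)\to E'(X)$ is an equivalence since $\bC$ is stable.

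To handle the left vertical map I would invoke Proposition~\ref{rgriugeroigregregerg}: the hypotheses of that proposition (presentability of $\bC$, countable additivity of $E$ resp.\ $E'$, and separability and bounded geometry of $X$) are exactly the standing assumptions here. Applied to $E$ and to $E'$ it yields a commuting square
\[\xymatrix{
((\Sigma E(\ast)\wedge \Sigma^{\infty}_{+})^{\lf}\circ F_{\cC,\cU/2})\bP(X)\ar[r]^-{\simeq}\ar[d] & E\cO^{\infty}\bP(X)\ar[d]\\
((\Sigma E'(\ast)\wedge \Sigma^{\infty}_{+})^{\lf}\circ F_{\cC,\cU/2})\bP(X)\ar[r]^-{\simeq} & E'\cO^{\infty}\bP(X)
}\]
whose horizontal maps are equivalences. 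On the left-hand side the map is induced by the transformation of coefficient spectra $\Sigma E(\ast)\to \Sigma E'(\ast)$; since this is an equivalence by hypothesis~\ref{fewoihjowefwfewfwef}.\ref{iuheiwofwefewfew}, and since smashing with a space and forming the locally finite evaluation both preserve equivalences of coefficient spectra (the latter because $\bC$ is presentable stable), the left vertical map is an equivalence. Hence so is the right vertical map, completing the reduction above.

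The main technical point I expect to need care is the naturality in $E$ of the equivalence produced by Proposition~\ref{rgriugeroigregregerg}. Unwinding its proof, the equivalence is obtained by applying Proposition~\ref{fifowefweewfwef} to each Rips complex $P_U(X)$ and then taking the colimit over $U$; Proposition~\ref{fifowefweewfwef} in turn is assembled from Proposition~\ref{fiuwofewfewfewf} (the map $E\cO^{\infty}\to (E\cO^{\infty})^{\lf}$ is an equivalence on finite-dimensional, locally finite, countable simplicial complexes) and Lemma~\ref{egiojerogrgreg} (comparison of $F^{\%,\lf}$ with $F^{\lf}$). Both are obtained from natural transformations of functors in $E$, so the resulting equivalence is natural in $E\to E'$, and the square above genuinely commutes. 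This naturality verification is the only nontrivial ingredient; everything else is formal from Proposition~\ref{rgriugeroigregregerg} and the assumption that the assembly maps are equivalences.
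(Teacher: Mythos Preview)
Your proof is correct and follows essentially the same approach as the paper: the paper presents the two squares you wrote as a single three-column commuting diagram, with Proposition~\ref{rgriugeroigregregerg} on the left and the assembly maps on the right, and concludes in the same way. Your explicit discussion of the naturality of the equivalence from Proposition~\ref{rgriugeroigregregerg} is more careful than the paper, which simply asserts that the diagram commutes.
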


\begin{proof}
By an inspection of the arguments going into the proof of Proposition \ref{rgriugeroigregregerg}
one checks that the asserted equivalence is natural in $E$.  This gives the left commuting square in the following diagram. Similarly for the right square we use that  the coarse assembly map is natural in $E$.
 $$\xymatrix{
((\Sigma E(*)\wedge \Sigma^{\infty}_{+})^{\lf}\circ F_{\cC,\cU/2})\bP^{o}(X)\ar[rr]^-{\text{Prop.}~\ref{rgriugeroigregregerg}}_-{\simeq}\ar[d]^-{\simeq} & & E\cO^{\infty}\bP (X) \ar[r]_-{\simeq}^-{\mu_{E,X}}\ar[d] & \Sigma E(X)\ar[d]\\
((\Sigma E^{\prime}(*)\wedge \Sigma^{\infty}_{+})^{\lf}\circ F_{\cC,\cU/2})\bP^{o}(X)\ar[rr]^-{\text{Prop.}~\ref{rgriugeroigregregerg}}_-{\simeq} & & E^{\prime}\cO^{\infty}\bP (X)\ar[r]_-{\simeq}^-{\mu_{E^{\prime},X}} & \Sigma E^{\prime}(X)
}$$
The left vertical morphism is an equivalence by Condition \ref{iuheiwofwefewfew}. We conclude that the right vertical morphism is an equivalence, too.
\end{proof}

We can use Theorems \ref{woifowfwewfewf},  \ref{feoijofwefewfewf} and \ref{wrefiowefwfewfewfewfw} in order to check Condition \ref{fwu28r224f} in the statement of Theorem \ref{fewoihjowefwfewfwef}.

\bibliographystyle{alpha}
\bibliography{ass3_bibliography}

\end{document}